\renewcommand{\labelenumi}{(\arabic{enumi})}
  \newtheorem{theorem}{Theorem}[section]
  \newtheorem{corollary}[theorem]{Corollary}
  \newtheorem{lemma}[theorem]{Lemma}
  \newtheorem{question}[theorem]{Question}
  \newtheorem{proposition}[theorem]{Proposition}
  \newtheorem{remark}[theorem]{Remark}
  \theoremstyle{definition}
  \newtheorem{definition}[theorem]{Definition}
\begin{document}

\title{D- Sets in Arbitrary Semigroup}

\author{Surajit Biswas}
\address{Ramakrishna Mission Vidyamandira, Belur Math, Howrah-711202, India.}
\email{241surajit@gmail.com}

\author{Bedanta Bose}
\address{Swami Niswambalananda Girls' College, Uttarpara, Hooghly-712232, India.}
\email{ana\textunderscore bedanta@yahoo.com}

\author{Sourav Kanti Patra}
\address{Indian Institute of Science Education and Research Berhampur, Ganjam District, Odisha-760010, India.}
\email{souravkantipatra@gmail.com}

\begin{abstract}
We define the notion of $D$-set in an arbitrary semigroup, and with some mild restrictions we establish its dynamical and combinatorial characterizations. Assuming a weak form of cancellation in semigroups we have shown that the Cartesian product of finitely many $D$-sets is a $D$-set. A similar partial result has been proved for Cartesian product of infinitely many $D$-sets. Finally, in a commutative semigroup we deduce that $D$-sets (with respect to a F{\o}lner net) are $C$-sets.
\end{abstract}

\keywords{Stone-\v{C}ech compactification, $D$-set, Dynamical characterization, Combinatorial characterization, $C$-set.}

\subjclass[2020]{37B20, 05D10}

\maketitle

\section{Introduction}
The notion of $D$-set in $\mathbb{N}$ first appeared in \cite{bbdf} by Beiglb\"{o}ck, Bergelson, Downarowicz and Fish. They showed that any Rado system (a finite partition regular system of homogeneous linear equations with integer coefficients) is solvable in $D$-sets. This result itself makes the collection of $D$-sets an interesting family, as it extends the family of central sets and solutions to Rado systems can be found in every central set \cite[Theorem 8.22]{f}. In fact, $D$-sets satisfy the conclusion of the Central Sets Theorem, i.e. they are $C$-sets \cite[Theorem 11]{bbdf}.\par
In \cite{bbdf}, a set $A\subseteq\mathbb{N}$ is called a $D$-set if there exists a compact dynamical system $(X,T)$ (i.e. a compact metric space $X$ and a continuous transformation $T$ on $X$), a pair of points $x,y\in X$ where $y$ is essentially recurrent\footnote{Given a dynamical system $(X,T),\;y\in X$ is called essentially recurrent if the set $\{n\in\mathbb{N}:\,T^n y\in U\}$ has positive upper Banach density for every neighborhood $U$ of $y$. Although, in Theorem \ref{Theorem 1.1} when we say $y\in X$ is essentially recurrent, we mean $\{n\in\mathbb{Z}:T^n y\in U\}$ has positive upper Banach density for every neighborhood $U$ of $y$.}, and such that $(y,y)$ belongs to the orbit closure of $(x,y)$ in the product system $(X\times X,T\times T)$, and an open neighborhood $U_y$ of $(y,y)$ such that $A=\{n\in\mathbb{N}:\,(T^n x,T^n y)\in U_y\}$. However, in \cite[Definition 1.2]{bd}, the collection of $D$-sets in $\mathbb{Z}$ is defined algebraically as the union of all idempotents $p\in \beta \mathbb{Z}$ ($\beta \mathbb{Z}$ is the Stone-\v{C}ech compactification of $\mathbb{Z}$) such that every member of $p$ has positive upper Banach density\footnote{The upper Banach density of $A\subset \mathbb{Z}$ is defined by $\limsup_{m-n\rightarrow \infty}\frac{|A\cap[n,m-1]|}{m-n}$.}. But \cite[Theorem 2.8]{bd} provides the following equivalent dynamical characterization of $D$-set in $\mathbb{Z}$.

\begin{theorem}\label{Theorem 1.1}
A set $A\subseteq \mathbb{Z}$ is a $D$-set if and only if there exists a compact dynamical system $(X,T)$ where $T$ is a homeomorphism, a pair of points $x,y\in X$ where $y$ is essentially recurrent, and such that $(y,y)$ belongs to the orbit closure of $(x,y)$ in the product system $(X\times X,T\times T)$, and an open neighborhood $U_y$ of $(y,y)$ such that $A=\{n\in\mathbb{Z}:\,(T^n x,T^n y)\in U_y\}$.
\end{theorem}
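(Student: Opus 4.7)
My plan is to prove both directions using the algebraic theory of $\beta\mathbb{Z}$. Throughout, let $\Delta^*$ denote the set of ultrafilters on $\mathbb{Z}$ whose every member has positive upper Banach density; this is a closed left ideal of the right-topological semigroup $(\beta\mathbb{Z},+)$, and the hypothesis that $A$ is a $D$-set is exactly that $A$ lies in some idempotent $p\in\Delta^*$.

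For the necessity direction I would work in the full shift $X=\{0,1\}^{\mathbb{Z}}$ with the two-sided shift $T$ (a homeomorphism of a compact metric space) and take $x:=\mathbf{1}_A$ and $y:=p\text{-}\lim_n T^n x$. Idempotence of $p$ gives $p\text{-}\lim_n T^n y=(p+p)\text{-}\lim_n T^n x=y$; consequently, for every neighborhood $U$ of $y$ the return-time set $\{n:T^n y\in U\}$ lies in $p\subseteq\Delta^*$ and thus has positive upper Banach density, proving essential recurrence. Applying the same limit coordinatewise in $X\times X$ yields $(y,y)=p\text{-}\lim_n (T^n x,T^n y)\in\overline{\{T^n(x,y):n\in\mathbb{Z}\}}$. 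Taking $U_y:=\{(u,v)\in X\times X:u_0=1\}$, which contains $(y,y)$ since $y_0=p\text{-}\lim x_n=1$ (as $A\in p$), I read off $A=\{n:(T^n x,T^n y)\in U_y\}$ directly.

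For the sufficiency direction I would introduce the closed set $S:=\{q\in\beta\mathbb{Z}:q\text{-}\lim_n T^n(x,y)=(y,y)\}$, nonempty by the orbit-closure hypothesis. A routine iterated-limit computation in the right-topological semigroup $\beta\mathbb{Z}$ shows $S$ is a subsemigroup, and every $q\in S$ automatically contains $A$ since $(y,y)\in U_y$; so it suffices to produce an idempotent in $S\cap\Delta^*$. Essential recurrence of $y$ makes the filter generated by $\{\{n:T^n y\in U\}:U\ni y\text{ open}\}$ consist of positive-density sets, hence extends to an ultrafilter $q_0\in\Delta^*$ with $q_0\text{-}\lim T^n y=y$; combining $q_0$ with a chosen $r\in S$ through the semigroup operation and using the left-ideal property $\beta\mathbb{Z}+\Delta^*\subseteq\Delta^*$ produces a member of $S\cap\Delta^*$. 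Ellis's lemma applied to this nonempty closed subsemigroup then delivers an idempotent $p$, and $A\in p\subseteq\Delta^*$ certifies $A$ as a $D$-set.

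The hard part will be arranging the composition so that $S\cap\Delta^*$ is genuinely nonempty: essential recurrence controls only the orbit of $y$, while $S$-membership demands simultaneous control of the orbit of $x$, and since $\Delta^*$ is only a left (not two-sided) ideal of $\beta\mathbb{Z}$, the order of addition becomes crucial and the naive combinations $q_0+r$ and $r+q_0$ each fall short in exactly one of the two required properties. The homeomorphism assumption on $T$, absent in the $\mathbb{N}$-version of \cite{bbdf}, is what permits the bi-infinite $p$-limit manipulations needed to reconcile these constraints.
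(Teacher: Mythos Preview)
The paper does not actually prove Theorem~1.1: it is quoted verbatim from \cite[Theorem~2.8]{bd} as background, so there is no ``paper's own proof'' to compare against. The paper's own contribution is the different (and more general) dynamical characterization in Theorem~3.7, whose proof is outsourced to the abstract machinery of Theorem~3.4 rather than built by hand in a shift space.

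Your argument is essentially the standard one and is close to what appears in \cite{bd}. The necessity direction is clean and correct. In the sufficiency direction your outline is right---define $S=\{q\in\beta\mathbb{Z}:q\text{-}\lim T^n(x,y)=(y,y)\}$, show it is a closed subsemigroup containing $A$ in every member, and seek an idempotent in $S\cap\Delta^*$---but your diagnosis of the ``hard part'' rests on a false premise. You assert that $\Delta^*$ is only a left ideal of $\beta\mathbb{Z}$; in fact $\mathbb{Z}$ is (trivially) both left and right cancellative, so by \cite[Theorems~2.4 and~2.7]{hs-2} (as recalled just before Theorem~2.5 of the present paper) $\Delta^*$ is a closed \emph{two-sided} ideal. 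With that in hand the obstruction dissolves: take $r\in S$ and $q_0\in\Delta^*$ with $q_0\text{-}\lim T^n y=y$ (from essential recurrence), and compute
\[
(q_0+r)\text{-}\lim_n T^n(x,y)=q_0\text{-}\lim_m T^m\bigl(r\text{-}\lim_n T^n(x,y)\bigr)=q_0\text{-}\lim_m T^m(y,y)=(y,y),
\]
so $q_0+r\in S$, while $q_0+r\in\Delta^*$ by the \emph{right}-ideal property $\Delta^*+\beta\mathbb{Z}\subseteq\Delta^*$. Hence $S\cap\Delta^*$ is a nonempty closed subsemigroup and Ellis gives the required idempotent. So your plan works, once you drop the mistaken restriction on $\Delta^*$; the speculation in your final paragraph about needing the homeomorphism hypothesis to ``reconcile constraints'' is unnecessary.
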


As mentioned in \cite{bbdf}, the above equivalence holds for the semigroup $(\mathbb{N},+)$ as well if we define $D$-set in $\mathbb{N}$ similarly as for $\mathbb{Z}$. 

\begin{definition}\label{Definition 1.2}
The collection $\mathcal{D}$ (of $D$-sets in $\mathbb{N}$) is the union of all idempotents $p\in \beta \mathbb{N}$ ($\beta \mathbb{N}$ stands for the Stone-\v{C}ech compactification of $\mathbb{N}$) such that every member of $p$ has positive upper Banach density.
\end{definition}

In Section \ref{Section 2} of this article we define an algebraic notion of $D$-set in an arbitrary semigroup (Definition \ref{Definition 2.2}) and show its existence in a large class of semigroups (Theorem \ref{Theorem 2.5}). In Section \ref{Section 3} we provide a dynamical characterization of $D$-sets (Theorem \ref{Theorem 3.7}) with some restrictions which we will introduce accordingly. In Section \ref{Section 4} we provide a combinatorial characterization (Theorem \ref{Theorem 4.5}) which shows the combinatorial richness of $D$-sets. In \cite{hs-1}, Hindman and Strauss have shown that the notions of central sets, $C$-sets and $J$-sets are preserved under finite Cartesian products. In Section \ref{Section 5} we prove the same for $D$-sets (Theorem \ref{Theorem 5.8}) assuming some weak form of cancellation in the semigroups. Moreover, in Corollary \ref{Corollary 5.14} we deduce that the projections from an infinite product of semigroups maps $D$-sets onto $D$-sets. In Theorem \ref{Theorem 5.18} we provide a partial converse of this result. In Section \ref{Section 6} it is shown that for a class of nets that exist in many semigroups, including all commutative and cancellative semigroups, any central set is a $D$-set (Proposition \ref{Proposition 6.2}). Moreover, in Theorem \ref{Theorem 6.13} we deduce that in a commutative semigroup, any $D$-set (with respect to a F{\o}lner net) is a $C$-set. Note that $C$-sets are precisely the sets those satisfies the conclusion of the Central Sets Theorem \cite[Corollary
14.14.10]{hs}, and so are guaranteed to have substantial combinatorial structure. \par
Our definition of $D$-set in an arbitrary semigroup $S$ is motivated from the algebraic definition of $D$-sets in $\mathbb{N}$ (Definition \ref{Definition 1.2}), and it is in the setting of $\beta{S}$, where $\beta{S}$ is the Stone-\v{C}ech compactification of the semigroup $S$ \cite[Definition 3.25]{hs}. From now onwards we will assume all our semigroups to be discrete. For a discrete semigroup $S$, $\beta S$ can be naturally identified with the collection of all ultrafilters on $S$ \cite[Theorem 3.27]{hs}, and the semigroup structure on $S$ induces a unique semigroup structure on $\beta{S}$ so that $\beta{S}$ becomes a right topological semigroup with $S$ contained in its topological center \cite[Theorem 4.1]{hs}. This semigroup structure is explicitly given by, for $p,q\in\beta{S},\,pq:=\{A\subseteq S:\,\{s\in S:\,s^{-1}A\in q\}\in p\}$. As a compact right topological semigroup, $\beta{S}$ has a smallest two-sided ideal denoted $K(\beta{S})$, which is the union of all minimal right ideals of $\beta{S}$ and is also the union of all minimal left ideals \cite[Theorem 2.8]{hs}. For other details regarding the semigroup $\beta{S}$, see \cite{hs}.

\section{Preliminaries}\label{Section 2}
We will use the following notion of upper Banach density \cite[Definition 2.1(c)]{hs-2} to define $D$-sets in arbitrary semigroup. (Given a set $X$ we write $\mathcal{P}_f(X)$ for the set of finite nonempty subset of $X$.)

\begin{definition}
Let $S$ be a semigroup, let $\mathcal{F}=\langle F_i\rangle_{i\in I}$ be a net in $\mathcal{P}_f(S)$, and let $A\subseteq S$. Then the upper Banach density of $A$ with respect to $\mathcal{F}$ is $$d_{\mathcal{F}}^*(A)=sup\{\alpha: (\forall i_0\in I)(\exists i\geq i_0)(\exists s\in S\cup \{1\})(|A\cap(F_i\cdot s)|\geq \alpha |F_i|)\}.$$
\end{definition}

We are not assuming that $S$ has an identity. By the notation $F_i\cdot 1\,(i\in I)$, we simply mean $F_i$. Our next definition provides the algebraic notion of $D$-sets in arbitrary semigroup.

\begin{definition}\label{Definition 2.2}
Let $S$ be a semigroup, let $\mathcal{F}$ be a net in $\mathcal{P}_f(S)$. Let $D_{\mathcal{F}}^*=\{p\in\beta S:(\forall A\in p)(d_{\mathcal{F}}^*(A)>0)\}$. Then a subset $A$ of $S$ is a $D$-set with respect to $\mathcal{F}$ if it is contained in an idempotent of $D_{\mathcal{F}}^*$.
\end{definition}

Note that given a semigroup $S$ and a net $\mathcal{F}$ in $\mathcal{P}_f(S)$, $D_{\mathcal{F}}^*$ is a nonempty subset of $\beta S$ \cite[Lemma 2.3]{hs-2}. Moreover $D_{\mathcal{F}}^*$ is closed as follows. If $p\in \beta S \setminus D_{\mathcal{F}}^*$, then $d_{\mathcal{F}}^*(A)=0$ for some $A\in p$. Hence $\overline{A}:=\{q\in \beta S: A\in q\}$ is an open neighborhood of $p$ and $\overline{A}\subseteq \beta S \setminus D_{\mathcal{F}}^*$. Thus $\beta S \setminus D_{\mathcal{F}}^*$ is open, i.e. $D_{\mathcal{F}}^*$ is closed. In \cite[Theorem 2.4]{hs-2} it is shown that a weak form of right cancellation in $S$ makes $D_{\mathcal{F}}^*$ a right ideal of $\beta S$. On the other hand, in \cite[Theorem 2.7]{hs-2} it is shown that the following $(*)$ condition on the net $\mathcal{F}$ and a weak form of left cancellation in $S$ makes $D_{\mathcal{F}}^*$ a left ideal of $\beta S$. Here we recall the terminologies for these weak form of cancellations \cite[Definition 1.7]{hs-2}, along with the above mentioned $(*)$ condition \cite[Definition 2.6]{hs-2}.

\begin{definition}
Let $S$ be a semigroup and let $b\in\mathbb{N}$. Then $S$ is $b$-weakly left cancellative (respectively $b$-weakly right cancellative) if for all $t,u\in S,\,|\{s\in S:\,ts=u\}|\leq b$ (respectively $|\{s\in S:\,st=u\}|\leq b$).
\end{definition}

\begin{definition}
Let $S$ be a semigroup and let $\mathcal{F}=\langle F_i \rangle_{i\in I}$ be a net in $\mathcal{P}_f(S)$. Then $\mathcal{F}$ might satisfy the folowing property.
\begin{enumerate}
\item[$(*)$] $(\forall \epsilon >0)(\forall t   \in S)(\exists c\in \mathbb{N})(\exists l \in I)(\forall i_0\geq l)(\exists i\geq i_0)(\exists s\in S\cup \{1\})\\
(|(t\cdot F_{i_0})\setminus (F_i\cdot s)|<\epsilon\cdot |F_{i_0}| \text{ and } |F_i|\leq c\cdot |F_{i_0}|)$.
\end{enumerate}
\end{definition}

Now the following theorem shows the existence of $D$-sets in a large class of semigroups. 

\begin{theorem}\label{Theorem 2.5}
Let $S$ be a semigroup with a given net $\mathcal{F}$ in $\mathcal{P}_f(S)$. Let either of the following two conditions hold:
\begin{enumerate}
\item There is some $b\in\mathbb{N}$ such that $S$ is $b$-weakly right cancellative.
\item There is some $b\in\mathbb{N}$ such that $S$ is $b$-weakly left cancellative, and $\mathcal{F}$ satisfies the $(*)$ condition.
\end{enumerate}
Then a $D$-set with respect to $\mathcal{F}$ exists in $S$.
\end{theorem}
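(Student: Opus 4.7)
The plan is to show that under either hypothesis, $D_{\mathcal{F}}^*$ is a closed subsemigroup of $\beta S$, and then invoke Ellis' theorem to extract an idempotent, whose members will be $D$-sets by Definition \ref{Definition 2.2}.

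First, I would record the structural facts about $D_{\mathcal{F}}^*$ that the paper has already imported from \cite{hs-2}. Regardless of the hypothesis, $D_{\mathcal{F}}^*$ is nonempty (\cite[Lemma 2.3]{hs-2}) and, as observed in the paragraph following Definition \ref{Definition 2.2}, it is closed in $\beta S$. Under condition~(1), \cite[Theorem 2.4]{hs-2} gives that $D_{\mathcal{F}}^*$ is a right ideal of $\beta S$; under condition~(2), \cite[Theorem 2.7]{hs-2} gives that it is a left ideal. Since every (one-sided) ideal of a semigroup is in particular a subsemigroup, in either case $D_{\mathcal{F}}^*$ is a closed subsemigroup of $\beta S$.

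Next, since $\beta S$ is a compact right topological semigroup and $D_{\mathcal{F}}^*$ is a closed subset that is itself a semigroup under the induced multiplication, $D_{\mathcal{F}}^*$ is a compact right topological semigroup in its own right. By the Ellis--Numakura lemma (\cite[Theorem 2.5]{hs}), $D_{\mathcal{F}}^*$ contains an idempotent $p = p\cdot p$.

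Finally, for any $A \in p$ (for instance $A = S$), Definition \ref{Definition 2.2} identifies $A$ as a $D$-set with respect to $\mathcal{F}$, so such sets exist. There is no real obstacle here; the only thing to watch is to invoke the correct one-sided ideal result according to which of the two hypotheses is in force, so that the subsemigroup property (and hence the applicability of Ellis--Numakura) is justified without having to re-derive anything about $d_{\mathcal{F}}^*$ from scratch.
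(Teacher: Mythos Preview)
Your proposal is correct and follows essentially the same approach as the paper: both argue that the cited results from \cite{hs-2} make $D_{\mathcal{F}}^*$ a closed one-sided ideal (hence a compact subsemigroup) of $\beta S$, and then apply Ellis' theorem \cite[Theorem 2.5]{hs} to obtain an idempotent. The only differences are cosmetic---you spell out the nonemptiness and the ideal-implies-subsemigroup step slightly more explicitly than the paper does.
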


\begin{proof}
The condition $\mathit{(1)}$ makes $D_{\mathcal{F}}^*$ a closed right ideal of $\beta S$, whereas the condition $\mathit{(2)}$ makes $D_{\mathcal{F}}^*$ a closed left ideal of $\beta S$. Thus either of the conditions $\mathit{(1)}$ or $\mathit{(2)}$ in particular makes $D_{\mathcal{F}}^*$ a compact subsemigroup of $\beta S$. Now, as $\beta S$ is a right topological semigroup, so $D_{\mathcal{F}}^*$ becomes a compact right topological semigroup. Hence, Ellis' theorem \cite[Theorem 2.5]{hs} ensures the existence of an idempotent in $D_{\mathcal{F}}^*$, and therefore a $D$-set with respect to $\mathcal{F}$ exists in $S$. 
\end{proof}

In the rest of the article, when the net $\mathcal{F}$ is clear from the context, we shall not mention it and simply write $D$-set for \lq $D$-set with respect to $\mathcal{F}$ \rq.

\section{Dynamical characterization of D-sets}\label{Section 3}

In \cite{j,p,sy} the authors established dynamical characterizations of $C$-sets and central sets in arbitrary semigroup along with their near zero version. Our dynamical characterization of $D$-sets is motivated by these dynamical characterizations. Let us begin by recalling some terminology \cite[Definition 2.1(a), 2.6 and 3.1]{j} used later in this section.

\begin{definition}
Let $S$ be a nonempty discrete space and $\mathcal{K}$ a filter on $S$. Then $\overline{\mathcal{K}}=\{p\in\beta S: \mathcal{K}\subseteq p\}$.
\end{definition}

\begin{definition}
A pair $(X,\langle T_s\rangle_{s\in S})$ is a dynamical system if it satisfies the following four conditions:
\begin{enumerate}
\item $X$ is a compact Hausdorff space.
\item $S$ is a semigroup.
\item $T_s:X\rightarrow X$ is continuous for every $s\in S$.
\item For every $s,t\in S$ we have $T_{st}=T_s\circ T_t$.
\end{enumerate}
\end{definition}

\begin{definition}
Let $(X,\langle T_{s}\rangle_{s\in S})$ be a dynamical system, $x$ and $y$ points in $X$, and $\mathcal{K}$ a filter on $S$. The pair $(x,y)$ is jointly $\mathcal{K}$-recurrent if for every neighborhood $U$ of $y$ we have $S\setminus\{s\in S:T_{s}(x)\in U\text{ and }T_{s}(y)\in U\}\notin\mathcal{K}$.
\end{definition}

The key ingredient to produce our dynamical characterization of $D$-sets is the following result \cite[Theorem 3.3]{j}.

\begin{theorem}\label{Theorem 3.4}
Let $S$ be a semigroup, let $\mathcal{K}$ be a filter on $S$ such that $\mathcal{\overline{\mathcal{K}}}$ is a compact subsemigroup of $\beta S$, and let $A\subseteq S$. Then $A$ is a member of an idempotent in $\overline{\mathcal{K}}$ if and only if there exists a dynamical system $(X,\langle T_s\rangle_{s\in S})$ with points $x$ and $y$ in $X$ and there exists a neighborhood $U_{y}$ of $y$ such that the pair $(x,y)$ is jointly $\mathcal{K}$-recurrent and $A=\{s\in S:T_{s}(x)\in U_{y}\}$.
\end{theorem}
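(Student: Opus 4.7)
The plan is to prove both directions by invoking the continuous extension of the $S$-action to $\beta S$, combined with the standard algebra of idempotent ultrafilters and Ellis' theorem.

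For the forward direction, given an idempotent $p \in \overline{\mathcal{K}}$ with $A \in p$, I would build the dynamical system directly from $\beta S$ itself. Adjoining an identity $e$ if necessary, set $X = \beta S^{1}$ where $S^{1} = S \cup \{e\}$, and define $T_{s}(q) = sq$ for $s \in S$. This is a valid dynamical system because $S$ sits in the topological center of $\beta S^{1}$, so each $T_{s}$ is continuous, and $T_{st} = T_{s} \circ T_{t}$ is immediate. Take $y = p$, let $x$ be the principal ultrafilter at $e$, and $U_{y} = \{q \in \beta S^{1} : A \in q\}$, a clopen neighborhood of $p$ since $A \in p$. Then $T_{s}(x) = s$, so $T_{s}(x) \in U_{y}$ iff $s \in A$, giving the required identification. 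For joint $\mathcal{K}$-recurrence, any basic neighborhood $U = \{q : B \in q\}$ of $y$ with $B \in p$ yields
\[
\{s \in S : T_{s}(x) \in U \text{ and } T_{s}(y) \in U\} = B \cap B^{*},
\]
where $B^{*} = \{s : s^{-1}B \in p\}$; idempotency of $p$ forces $B^{*} \in p$, hence $B \cap B^{*} \in p$, and since $\mathcal{K} \subseteq p$ its complement cannot lie in $\mathcal{K}$.

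For the converse, given a dynamical system with jointly $\mathcal{K}$-recurrent pair $(x,y)$ and $A = \{s : T_{s}(x) \in U_{y}\}$, define
\[
M = \{q \in \overline{\mathcal{K}} : T_{q}(x) = y \text{ and } T_{q}(y) = y\},
\]
where $T_{q}(z) := q\text{-}\lim_{s \in S} T_{s}(z)$ is obtained from the unique continuous extension $\beta S \to X$ of $s \mapsto T_{s}(z)$. The plan is to verify that $M$ is a nonempty compact subsemigroup of $\overline{\mathcal{K}}$ and then apply Ellis' theorem. Nonemptiness follows from joint $\mathcal{K}$-recurrence: the family $\mathcal{K} \cup \{\{s : T_{s}(x) \in U \text{ and } T_{s}(y) \in U\} : U \text{ a neighborhood of } y\}$ has the finite intersection property, and any ultrafilter extending it lies in $M$. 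Closedness in $\overline{\mathcal{K}}$ is automatic because $q \mapsto T_{q}(z)$ is continuous into Hausdorff $X$. Once $M$ contains an idempotent $q$, the relation $T_{q}(x) = y \in U_{y}$ gives $A = \{s : T_{s}(x) \in U_{y}\} \in q$, completing the converse.

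The main technical obstacle I anticipate is verifying that $M$ is closed under $\beta S$-multiplication, which reduces to the operator identity $T_{pq} = T_{p} \circ T_{q}$. Establishing this requires unpacking the ultrafilter product formula $B \in pq \iff \{s : s^{-1}B \in q\} \in p$, combined with continuity of each $T_{s}$ and the right-topology of $\beta S$, to show that $pq\text{-}\lim_{u} T_{u}(z)$ equals the iterated limit $p\text{-}\lim_{s} q\text{-}\lim_{t} T_{st}(z)$. The order of the limits matters and cannot be swapped, so the computation must be carried out carefully; everything else is routine Stone-\v{C}ech bookkeeping, with the compactness and semigroup structure of $\overline{\mathcal{K}}$ (given by hypothesis) doing the rest of the heavy lifting.
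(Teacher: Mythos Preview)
The paper does not actually prove Theorem~\ref{Theorem 3.4}; it is quoted verbatim from Johnson \cite[Theorem~3.3]{j} and used as a black box to derive Theorem~\ref{Theorem 3.7}. So there is no in-paper proof to compare against.

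That said, your argument is correct and is essentially the standard proof one finds in the literature (including Johnson's paper and the analogous arguments for central sets in \cite{sy} and \cite[Chapter~19]{hs}). The forward direction via $X=\beta S^{1}$ with $x=e$, $y=p$, $U_y=\overline{A}$ is exactly the canonical construction, and your verification of joint $\mathcal{K}$-recurrence via $B\cap B^{*}\in p$ together with $\mathcal{K}\subseteq p$ is clean. For the converse, the set $M=\{q\in\overline{\mathcal{K}}:T_q(x)=y=T_q(y)\}$ is precisely the object one builds; your FIP argument for nonemptiness is right (the key point being that $K\in\mathcal{K}$ and $S\setminus R\notin\mathcal{K}$ force $K\cap R\neq\emptyset$), and the identity $T_{pq}=T_p\circ T_q$ you flag as the main obstacle is \cite[Theorem~19.11]{hs}, proved exactly along the iterated-limit lines you sketch. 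One cosmetic point: when you pass from basic clopen neighborhoods $\overline{B}$ to arbitrary neighborhoods $U$ of $y$ in the forward direction, you are implicitly using that the return set only grows, hence its complement shrinks, and a filter cannot contain a subset of a set it omits---worth one sentence, but not a gap.
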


The following lemma directly follows from \cite[Theorem 2.2(a)]{j}. But here we provide a self-contained proof.

\begin{lemma}\label{Lemma 3.5}
Let $S$ be a semigroup with a given net $\mathcal{F}$ in $\mathcal{P}_f(S)$. Let $\mathcal{K}=\{A\subseteq S:d_{\mathcal{F}}^{*}(S\setminus A)=0\}$. Then $\mathcal{K}$ is a filter on $S$ and $\overline{\mathcal{K}}=D_{\mathcal{F}}^{*}$. Equivalently, $D_{\mathcal{F}}^{*}=\bigcap_{A\in\mathcal{K}}\overline{A}$. Moreover, let either of the following two conditions hold:
\begin{enumerate}
\item There is some $b\in\mathbb{N}$ such that $S$ is $b$-weakly right cancellative.
\item There is some $b\in\mathbb{N}$ such that $S$ is $b$-weakly left cancellative, and $\mathcal{F}$ satisfies the $(*)$ condition.
\end{enumerate}
If $A$ is a subset of $S$ such that $d_{\mathcal{F}}^{*}(S\setminus A)=0$,
then $A$ is a $D$-set with respect to $\mathcal{F}$.
\end{lemma}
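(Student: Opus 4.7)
My plan is to prove the three assertions of the lemma in sequence. \emph{Step 1} is to verify that $\mathcal{K}$ is a filter. Upward closure is immediate from the monotonicity $S\setminus B\subseteq S\setminus A$ when $A\subseteq B$, and $S\in\mathcal{K}$ because $d_{\mathcal{F}}^{*}(\emptyset)=0$, while $\emptyset\notin\mathcal{K}$ because taking $s=1$ gives $|S\cap F_{i}|=|F_{i}|$, so $d_{\mathcal{F}}^{*}(S)=1$. The only nontrivial axiom is closure under finite intersection, which reduces to showing $d_{\mathcal{F}}^{*}(C\cup D)=0$ whenever $d_{\mathcal{F}}^{*}(C)=d_{\mathcal{F}}^{*}(D)=0$. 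Given $\alpha>0$, I would negate the sup condition at level $\alpha/2$ to obtain $i_{0}^{C},i_{0}^{D}\in I$ beyond which $|C\cap(F_{i}\cdot s)|<(\alpha/2)|F_{i}|$ and $|D\cap(F_{i}\cdot s)|<(\alpha/2)|F_{i}|$ for every $s\in S\cup\{1\}$; picking a common upper bound in the directed set $I$ and applying the union bound gives $|(C\cup D)\cap(F_{i}\cdot s)|<\alpha|F_{i}|$, whence $d_{\mathcal{F}}^{*}(C\cup D)\leq\alpha$ for every $\alpha>0$.

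\emph{Step 2} is the identity $\overline{\mathcal{K}}=D_{\mathcal{F}}^{*}$, which is pure ultrafilter bookkeeping using the dichotomy ``$A\in p$ or $S\setminus A\in p$''. If $p\in\overline{\mathcal{K}}$ and some $A\in p$ had $d_{\mathcal{F}}^{*}(A)=0$, then $S\setminus A\in\mathcal{K}\subseteq p$, contradicting $A\in p$; hence $p\in D_{\mathcal{F}}^{*}$. Conversely, if $p\in D_{\mathcal{F}}^{*}$ and $B\in\mathcal{K}$ failed to lie in $p$, then $S\setminus B\in p$ would force $d_{\mathcal{F}}^{*}(S\setminus B)>0$, contradicting $B\in\mathcal{K}$; hence $\mathcal{K}\subseteq p$, i.e., $p\in\overline{\mathcal{K}}$.

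\emph{Step 3} handles the moreover clause. Under either condition (1) or (2), the argument of Theorem \ref{Theorem 2.5} shows that $D_{\mathcal{F}}^{*}$ is a closed (hence compact) subsemigroup of the right topological semigroup $\beta S$, so Ellis' theorem supplies an idempotent $p\in D_{\mathcal{F}}^{*}$. If $d_{\mathcal{F}}^{*}(S\setminus A)=0$ then $A\in\mathcal{K}$, and the identity $\overline{\mathcal{K}}=D_{\mathcal{F}}^{*}$ established in Step 2 gives $A\in q$ for every $q\in D_{\mathcal{F}}^{*}$; in particular $A\in p$, so $A$ is a $D$-set with respect to $\mathcal{F}$.

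The main obstacle I expect is the subadditivity-at-zero argument in Step 1: the sup-over-witnesses formulation of $d_{\mathcal{F}}^{*}$ forces one to carefully negate the nested quantifiers over the directed index set $I$ and over $s\in S\cup\{1\}$, and to invoke directedness of $I$ to combine the two thresholds $i_{0}^{C}$ and $i_{0}^{D}$. Once that is in hand, Steps 2 and 3 are routine applications of the ultrafilter dichotomy and Ellis' theorem.
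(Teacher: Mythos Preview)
Your proposal is correct and follows essentially the same route as the paper's proof: verify the filter axioms (with closure under intersection reduced to subadditivity of $d_{\mathcal{F}}^{*}$ at zero), establish $\overline{\mathcal{K}}=D_{\mathcal{F}}^{*}$ via the ultrafilter dichotomy, and then invoke the argument of Theorem~\ref{Theorem 2.5} together with Ellis' theorem for the moreover clause. The only difference is cosmetic: the paper simply asserts the inequality $d_{\mathcal{F}}^{*}\big((S\setminus A)\cup(S\setminus B)\big)\leq d_{\mathcal{F}}^{*}(S\setminus A)+d_{\mathcal{F}}^{*}(S\setminus B)$, whereas you spell out the negation-of-sup and directedness argument that underlies it.
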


\begin{proof}
By \cite[Definition 3.1]{hs}, $\mathcal{K}$ is a filter on $S$ if it is nonempty and satisfies the following three conditions:
\begin{enumerate}[label=(\alph*)]
\item If $A,B\in\mathcal{K}$, then $A\cap B\in\mathcal{K}$.
\item If $A\in\mathcal{K}$ and $A\subseteq B\subseteq S$, then $B\in\mathcal{K}$.
\item $\emptyset\notin\mathcal{K}$. 
\end{enumerate}
Now to prove (a), let $A,B\in\mathcal{K}$. Then $d_{\mathcal{F}}^*(S\setminus A)=d_{\mathcal{F}}^*(S\setminus B)=0$. Hence $d_{\mathcal{F}}^*\big(S\setminus(A\cap B)\big)=d_{\mathcal{F}}^*\big((S\setminus A)\cup (S\setminus B)\big)\leq d_{\mathcal{F}}^*(S\setminus A)+d_{\mathcal{F}}^*(S\setminus B)$ and we have $d_{\mathcal{F}}^*\big(S\setminus(A\cap B)\big)=0$, i.e. $A\cap B\in\mathcal{K}$. To prove (b), let $A\in\mathcal{K}$ and $A\subseteq B\subseteq S$. Then $S\setminus B\subseteq S\setminus A$ and we have $d_{\mathcal{F}}^*(S\setminus B)\leq d_{\mathcal{F}}^*(S\setminus A)$. Which implies $d_{\mathcal{F}}^*(S\setminus B)=0$, i.e. $B\in\mathcal{K}$. (c) is obvious as $d_{\mathcal{F}}^*(S)=1$. Thus $\mathcal{K}$ is a filter on $S$.\par 
To show $\overline{\mathcal{K}}=D_{\mathcal{F}}^*$, let $p$ be an element of  $\beta S \setminus D_{\mathcal{F}}^*$. Then there exists $A\in p$ such that $d_{\mathcal{F}}^*(A)=0$, i.e. $S\setminus A \in \mathcal{K}$. As $p$ is an ultrafilter, so by \cite[Theorem 3.6]{hs}, $A\in p$ implies $S\setminus A\notin p$. Therefore we have $S\setminus A \in \mathcal{K}\setminus p$, i.e. $\mathcal{K}\not\subseteq p$, i.e. $p\in \beta S \setminus \overline{\mathcal{K}}$. Thus we have $\beta S \setminus D_{\mathcal{F}}^*\subseteq \beta S \setminus \overline{\mathcal{K}}$. The above implications also holds in the reverse order and that will provide us the reverse inclusion $\beta S \setminus \overline{\mathcal{K}}\subseteq \beta S \setminus D_{\mathcal{F}}^*$. Taking these inclusions together, we get $\beta S \setminus \overline{\mathcal{K}} = \beta S \setminus D_{\mathcal{F}}^*$, i.e. $\overline{\mathcal{K}}=D_{\mathcal{F}}^*$. But $\overline{\mathcal{K}}=\bigcap_{A\in\mathcal{K}}\overline{A}$ and equivalently we have $D_{\mathcal{F}}^*=\bigcap_{A\in\mathcal{K}}\overline{A}$.\par 
For the last part, note that if condition $\mathit{(1)}$ or $\mathit{(2)}$ holds, then $D_{\mathcal{F}}^*$ is a compact subsemigroup of $\beta{S}$. Now as  $D_{\mathcal{F}}^{*}=\bigcap_{A\in \mathcal{K}}\overline{A}$, for any idempotent $p$ in $D_{\mathcal{F}}^{*}$, we have $p\in\bigcap_{A\in\mathcal{K}}\overline{A}$, i.e. $A\in p$ for any subset $A$ of $S$ such that $d_{\mathcal{F}}^{*}(S\setminus A)=0$. Hence, the last conclusion follows.
\end{proof}

\begin{lemma}\label{Lemma 3.6}
Let $S$ be a semigroup with a given net $\mathcal{F}$ in $\mathcal{P}_f(S)$. Let $\mathcal{K}=\{A\subseteq S:d_{\mathcal{F}}^{*}(S\setminus A)=0\}$. Let $(X,\langle T_{s}\rangle_{s\in S})$ be a dynamical system. Then a pair of points $(x,y)$ in $X\times X$ is jointly $\mathcal{K}$-recurrent if and only if for any neighborhood $U$ of $y$, $d_{\mathcal{F}}^{*}(\{s\in S:T_{s}(x)\in U\text{ and }T_{s}(y)\in U\})>0$.
\end{lemma}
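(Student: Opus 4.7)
The approach is a direct unwinding of definitions; no deep machinery is needed. For a fixed open neighborhood $U$ of $y$, denote the ``double recurrence set'' by
\[
A_U := \{s \in S : T_s(x) \in U \text{ and } T_s(y) \in U\}.
\]
By the definition of joint $\mathcal{K}$-recurrence, $(x,y)$ is jointly $\mathcal{K}$-recurrent if and only if, for every neighborhood $U$ of $y$, the set $S \setminus A_U$ does not belong to $\mathcal{K}$.

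Next, I would apply the definition of $\mathcal{K}$. By definition, membership $S\setminus A_U \in \mathcal{K}$ is equivalent to $d_{\mathcal{F}}^*\big(S \setminus (S\setminus A_U)\big) = 0$. Using the elementary identity $S \setminus (S \setminus A_U) = A_U$, this simplifies to $d_{\mathcal{F}}^*(A_U) = 0$. Therefore, $S \setminus A_U \notin \mathcal{K}$ is equivalent to $d_{\mathcal{F}}^*(A_U) \neq 0$. Since upper Banach density is by construction a nonnegative quantity (as it is defined as a supremum over nonnegative ratios), the condition $d_{\mathcal{F}}^*(A_U) \neq 0$ is the same as $d_{\mathcal{F}}^*(A_U) > 0$.

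Chaining the two equivalences and quantifying over all neighborhoods $U$ of $y$ yields precisely the statement of the lemma. The only point to be careful with is the double complement $S \setminus (S \setminus A_U) = A_U$, which converts the ``complement has zero density'' formulation of $\mathcal{K}$ into a positive-density statement about the recurrence set itself. There is no real obstacle here; the lemma is essentially a reformulation of the definition of joint $\mathcal{K}$-recurrence for the specific filter $\mathcal{K} = \{A \subseteq S : d_{\mathcal{F}}^*(S\setminus A) = 0\}$.
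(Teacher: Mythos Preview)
Your proof is correct and follows essentially the same approach as the paper's own proof: both simply unwind the definitions of joint $\mathcal{K}$-recurrence and of $\mathcal{K}$ to arrive at the equivalence. Your version is slightly more explicit about the double complement $S\setminus(S\setminus A_U)=A_U$ and the nonnegativity of $d_{\mathcal{F}}^*$, but the argument is identical in substance.
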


\begin{proof}
$(x,y)$ is jointly $\mathcal{K}$-recurrent if and only if for any neighborhood $U$ of $y,\,S\setminus\{s\in S: T_s(x)\in U \text{ and }T_s(y)\in U\}\notin \mathcal{K}$. But $S\setminus\{s\in S:T_{s}(x)\in U \text{ and } T_{s}(y)\in U\}\notin\mathcal{K}$ if and only if  $d_{\mathcal{F}}^{*}(\{s\in S:T_{s}(x)\in U \text{ and } T_{s}(y)\in U\})>0$. 
\end{proof}

Finally we have the following dynamical characterization of $D$-sets in a large class of semigroups.

\begin{theorem}\label{Theorem 3.7}
Let $S$ be a semigroup with a given net $\mathcal{F}$ in $\mathcal{P}_f(S)$. Let either of the following two conditions hold:
\begin{enumerate}
\item There is some $b\in\mathbb{N}$ such that $S$ is $b$-weakly right cancellative.
\item There is some $b\in\mathbb{N}$ such that $S$ is $b$-weakly left cancellative, and $\mathcal{F}$ satisfies the $(*)$ condition.
\end{enumerate}
Then $A\subseteq S$ is a $D$-set with respect to $\mathcal{F}$ if and only if there exists a dynamical system $(X,\langle T_{s}\rangle_{s\in S})$ with points $x$ and $y$ in $X$ such that for any neighborhood $U$ of $y$, $d_{\mathcal{F}}^{*}(\{s\in S:T_{s}(x)\in U\text{ and }T_{s}(y)\in U\})>0$ and for some neighborhood $U_{y}$ of $y$, $A=\{s\in S:T_{s}(x)\in U_{y}\}$.
\end{theorem}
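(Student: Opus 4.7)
The plan is to observe that Theorem~\ref{Theorem 3.7} is essentially a direct combination of Theorem~\ref{Theorem 3.4} with Lemmas~\ref{Lemma 3.5} and~\ref{Lemma 3.6}, once we verify that the filter $\mathcal{K}=\{A\subseteq S:d_{\mathcal{F}}^{*}(S\setminus A)=0\}$ from those lemmas is the right object to feed into Theorem~\ref{Theorem 3.4}. So the first thing I would do is set up this filter and note that, by the proof of Theorem~\ref{Theorem 2.5}, either of the hypotheses $(1)$ or $(2)$ guarantees that $D_{\mathcal{F}}^{*}$ is a compact subsemigroup of $\beta S$. Then by Lemma~\ref{Lemma 3.5}, $\overline{\mathcal{K}}=D_{\mathcal{F}}^{*}$, so $\overline{\mathcal{K}}$ is itself a compact subsemigroup of $\beta S$, which is exactly the hypothesis required to invoke Theorem~\ref{Theorem 3.4}.

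Next, I would translate the two sides of the equivalence. On the algebraic side, the definition of a $D$-set with respect to $\mathcal{F}$ (Definition~\ref{Definition 2.2}) is that $A$ belongs to an idempotent of $D_{\mathcal{F}}^{*}$, which by the identification $\overline{\mathcal{K}}=D_{\mathcal{F}}^{*}$ is the same as $A$ being a member of an idempotent of $\overline{\mathcal{K}}$. This is precisely the algebraic hypothesis appearing in Theorem~\ref{Theorem 3.4}. On the dynamical side, Theorem~\ref{Theorem 3.4} produces a dynamical system $(X,\langle T_s\rangle_{s\in S})$ with points $x,y\in X$ and a neighborhood $U_{y}$ of $y$ such that $(x,y)$ is jointly $\mathcal{K}$-recurrent and $A=\{s\in S:T_{s}(x)\in U_{y}\}$. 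By Lemma~\ref{Lemma 3.6}, joint $\mathcal{K}$-recurrence of $(x,y)$ is literally equivalent to the statement that $d_{\mathcal{F}}^{*}\bigl(\{s\in S:T_{s}(x)\in U\text{ and }T_{s}(y)\in U\}\bigr)>0$ for every neighborhood $U$ of $y$, which is the dynamical condition in the statement of Theorem~\ref{Theorem 3.7}.

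Putting these pieces together, the forward direction reads: if $A$ is a $D$-set, apply the ``only if'' part of Theorem~\ref{Theorem 3.4} to $\overline{\mathcal{K}}$ to obtain the dynamical system, the points, the neighborhood $U_{y}$, and the identity $A=\{s\in S:T_{s}(x)\in U_{y}\}$, then reinterpret joint $\mathcal{K}$-recurrence via Lemma~\ref{Lemma 3.6}. The reverse direction is the symmetric application of the ``if'' part of Theorem~\ref{Theorem 3.4}: given the dynamical data and the density condition, Lemma~\ref{Lemma 3.6} upgrades the density condition to joint $\mathcal{K}$-recurrence, Theorem~\ref{Theorem 3.4} produces an idempotent in $\overline{\mathcal{K}}=D_{\mathcal{F}}^{*}$ containing $A$, and Definition~\ref{Definition 2.2} then identifies $A$ as a $D$-set with respect to $\mathcal{F}$.

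There is no real obstacle here; the theorem is a bookkeeping combination of the three cited results, so the only substantive point to check is that the compact subsemigroup hypothesis of Theorem~\ref{Theorem 3.4} is supplied by the cancellation/$(*)$ hypotheses through Theorem~\ref{Theorem 2.5} and Lemma~\ref{Lemma 3.5}. Everything else is a matter of matching notation and quoting the lemmas.
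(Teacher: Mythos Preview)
Your proposal is correct and mirrors the paper's own proof almost exactly: the paper also sets $\mathcal{K}=\{A\subseteq S:d_{\mathcal{F}}^{*}(S\setminus A)=0\}$, invokes Lemma~\ref{Lemma 3.5} to identify $\overline{\mathcal{K}}=D_{\mathcal{F}}^{*}$ (a compact subsemigroup under hypothesis (1) or (2)), applies Theorem~\ref{Theorem 3.4}, and then uses Lemma~\ref{Lemma 3.6} to rewrite joint $\mathcal{K}$-recurrence as the density condition. Your write-up is in fact slightly more explicit than the paper's in spelling out both directions and the role of Theorem~\ref{Theorem 2.5}, but the argument is the same.
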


\begin{proof}
Note that by Lemma \ref{Lemma 3.5}, we have $D_{\mathcal{F}}^*=\overline{\mathcal{K}}$ where $\mathcal{K}=\{A\subseteq S:d_{\mathcal{F}}^*(S\setminus A)=0\}$. So Theorem \ref{Theorem 3.4} provides us the desired dynamical characterization of $D$-sets when we replace the condition of \lq joint $\mathcal{K}$-recurrence\rq \,by its equivalence (in terms of density) which we proved in Lemma \ref{Lemma 3.6}.
\end{proof}

\section{Combinatorial Characterization of D-sets}\label{Section 4}

In \cite[Theorem 3.8]{hms}, Hindman, Maleki and Strauss provide a combinatorial characterization of central sets. This ensures a sufficient amount of combinatorial richness in central sets. Also in \cite{bcp,hl} the authors provide combinatorial characterizations of the near zero version of $C^*$-sets and central sets. Being motivated by this, in this section we provide a combinatorial characterizations of $D$-sets in an arbitrary semigroup. For this purpose, we will recall the notion of "tree" \cite[Definition 3.4 and 3.5]{hms} in the next two definitions. We write $\omega=\{0,1,2,3,...\}$, the first infinite ordinal and note that each ordinal is defined by the set of its predecessors (e.g. $2=\{0,1\},\,0=\emptyset$ and if $f$ is the function $\{(0,0),(1,4),(2,1),(3,10)\}$, then $f|_3=\{(0,0),(1,4),(2,1)\}$).

\begin{definition}
$\mathsf{T}$ is a tree in $A$ if $\mathsf{T}$ is a set of functions and for each $f\in \mathsf{T},\,\text{domain\ensuremath{(f)}\ensuremath{\in\omega}\ }$
and range$(f)\subseteq A$ and if domain$(f)=n>0$, then $f|_{n-1}\in \mathsf{T}$. $\mathsf{T}$ is a tree if for some $A,\,\mathsf{T}$ is a tree in $A$.
\end{definition}

\begin{definition}
\begin{enumerate}
\item Let $f$ be a function with $domain(f)=n\in \omega$ and let $s$ be given. Then $f^{\frown} s=f\cup\{(n,s)\}$.
\item Given a tree $\mathsf{T}$ and $f\in \mathsf{T}, B_f=B_f(\mathsf{T})=\{s:f^{\frown} s \in \mathsf{T}\}$.
\item Let $S$ be a semigroup and let $A\subseteq S$. Then $\mathsf{T}$ is a $*$-tree in $A$ if $\mathsf{T}$  is a tree in $A$ and for all $s\in B_f,B_{f^{\frown} s}\subseteq s^{-1}B_f$.
\item Let $S$ be a semigroup and let $A\subseteq S$. Then $\mathsf{T}$ is a $FP$-tree in $A$ if $\mathsf{T}$ is a tree in $A$ and for all $f\in \mathsf{T}$ and for all $s\in B_f$,
$$B_f=\Big\{{\prod}_{m\in F}g(m):g\in \mathsf{T},f\subsetneq g, \text{ and } \emptyset\neq F\subseteq dom(g)\setminus dom(f)\Big\}.$$
\end{enumerate}
\end{definition}

Given a semigroup $S$, if $p$ is an idempotent in $\beta S$, then each element of $p$ naturally corresponds to a $*$-tree. This correspondence follows from the next two lemmas \cite[Lemma 3.6]{hms} and \cite[Lemma 4.6]{hl}.

\begin{lemma}\label{Lemma 4.3}
Let $S$ be a semigroup and let $A\subseteq S$. Let $p$ be an idempotent in $\beta S$ with $A\in p$. There is a $FP$-tree $\mathsf{T}$ in $A$ such that for each $f\in \mathsf{T},\, B_f\in p$.
\end{lemma}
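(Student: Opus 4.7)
The plan is to build $\mathsf{T}$ by recursion on $\mathrm{dom}(f)$, leveraging the standard property of idempotent ultrafilters: if $pp = p$ and $B\in p$, then $B^{\star}:=\{s\in B:s^{-1}B\in p\}$ lies in $p$ and every $s\in B^{\star}$ satisfies $s^{-1}B^{\star}\in p$. In particular $B^{\star}\subseteq(B^{\star})^{\star}$, so the operation $(\cdot)^{\star}$ preserves the ``self-absorbing'' property and can be iterated freely along any chain of recursive refinements.

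I would set $\emptyset\in\mathsf{T}$ and $B_{\emptyset}:=A^{\star}\in p$. Inductively, suppose $f\in\mathsf{T}$ has already been placed with $B_{f}\in p$ and with the property that $s^{-1}B_{f}\in p$ for every $s\in B_{f}$. For each such $s$ I declare $f^{\frown}s\in\mathsf{T}$ and assign
\[
B_{f^{\frown}s}\;:=\;\bigl(B_{f}\cap s^{-1}B_{f}\bigr)^{\star}.
\]
Because $B_{f}$ and $s^{-1}B_{f}$ both belong to $p$, so does their intersection, and hence $B_{f^{\frown}s}\in p$; the idempotent property simultaneously guarantees $t^{-1}B_{f^{\frown}s}\in p$ for every $t\in B_{f^{\frown}s}$, so the inductive hypothesis propagates. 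Two features are built in by construction: $B_{f^{\frown}s}\subseteq B_{f}$ and $s\cdot B_{f^{\frown}s}\subseteq B_{f}$, with the second encoding the one-step absorption on which the $FP$-tree verification will rest.

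With $\mathsf{T}$ in hand, I would verify the defining equation $B_{f}=\{\prod_{m\in F}g(m):g\in\mathsf{T},\,f\subsetneq g,\,\emptyset\neq F\subseteq\mathrm{dom}(g)\setminus\mathrm{dom}(f)\}$ directly. The inclusion $\subseteq$ is immediate: given $s\in B_{f}$, the extension $g:=f^{\frown}s\in\mathsf{T}$ together with $F:=\{\mathrm{dom}(f)\}$ realises $s$ as a singleton product. For the reverse inclusion I would induct on $|F|$: writing $F=\{m_{1}<\cdots<m_{k}\}$ and $s_{i}:=g(m_{i})$, I process $s_{1}s_{2}\cdots s_{k}$ from right to left, using the monotonicity $B_{g|_{j}}\subseteq B_{g|_{i}}$ for $i\leq j$ (obtained by iterating $B_{h^{\frown}t}\subseteq B_{h}$) together with the one-step absorption $t\cdot B_{h^{\frown}t}\subseteq B_{h}$ to collapse the product one factor at a time until it lands in $B_{f}$.

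The principal technical subtlety is ensuring that not merely two-term products $s\cdot t$ but arbitrarily deep iterated products end up in $B_{f}$. This is exactly why the recursive definition takes $B_{f}\cap s^{-1}B_{f}$ and then closes under $(\cdot)^{\star}$: the intersection handles the one-step absorption at the current node, while the star preserves the invariant ``$s^{-1}B\in p$ for every $s\in B$'' that is needed to keep the recursion going at the next node. Once these choices are in place, the remaining checks are a routine induction on product length.
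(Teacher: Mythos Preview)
Your construction and verification are correct. The paper does not supply its own proof of this lemma but imports it from \cite[Lemma~3.6]{hms}; the argument there is precisely the star-set recursion $B_{f^{\frown}s}=(B_f\cap s^{-1}B_f)^{\star}$ that you describe, so your approach matches the intended one.
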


\begin{lemma}\label{Lemma 4.4}
Any $FP$-tree is a $*$-tree.
\end{lemma}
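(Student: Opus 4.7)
The plan is to unpack both definitions and exhibit, for each $f \in \mathsf{T}$ and each $s \in B_f$, a direct witness that the $*$-tree inclusion $B_{f^{\frown} s} \subseteq s^{-1} B_f$ holds. Fix $f \in \mathsf{T}$ and $s \in B_f$, set $n = dom(f)$, and take an arbitrary $t \in B_{f^{\frown} s}$. By the definition of $B_{f^{\frown} s}$, the function $g := (f^{\frown} s)^{\frown} t$ lies in $\mathsf{T}$.

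Next I would verify that $g$ is eligible on the right-hand side of the $FP$-tree equation attached to $f$: indeed $g \in \mathsf{T}$, $f \subsetneq g$ (since $g|_{n} = f$ while $dom(g) = n+2$), and $dom(g) \setminus dom(f) = \{n, n+1\}$ with $g(n) = s$ and $g(n+1) = t$. Choosing the set $F = \{n, n+1\} \subseteq dom(g) \setminus dom(f)$ and computing the product in the usual increasing-index order gives $\prod_{m \in F} g(m) = g(n) \cdot g(n+1) = st$. By the $FP$-tree property applied to $f$, this element lies in $B_f$, and therefore $t \in s^{-1} B_f$. Since $t \in B_{f^{\frown} s}$ was arbitrary, $B_{f^{\frown} s} \subseteq s^{-1} B_f$, which is precisely the $*$-tree condition.

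The argument is a direct unwinding of the two definitions, so no serious obstacle arises; the only subtle point is that the $FP$-tree clause is stated with a seemingly vacuous universal quantifier ``for all $s \in B_f$'' in front of an equation whose right-hand side does not involve $s$, which I would interpret as asserting the displayed equality for every $f \in \mathsf{T}$ for which $B_f$ is nonempty. Once the correct extension $g$ of $f$ and the two-element index set $F$ are chosen, the verification is automatic.
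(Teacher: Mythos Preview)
Your argument is correct and is precisely the standard direct verification. The paper does not supply its own proof of this lemma; it merely cites \cite[Lemma 4.6]{hl}. Your unwinding of the definitions---extending $f$ by $s$ and then $t$, and taking $F=\{n,n+1\}$ so that the product $st$ lands in $B_f$ via the $FP$-tree equality---is exactly the argument one finds in the cited source, so there is nothing to compare beyond noting that you have reproduced the omitted proof faithfully. Your remark about the apparently superfluous quantifier ``for all $s\in B_f$'' in the $FP$-tree definition is also apt: it is indeed redundant (likely a transcription slip from the $*$-tree clause), and your reading is the intended one.
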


When we say $\langle C_{F}\rangle_{F\in J}$ is a "downward directed family", we mean $J$ is a directed set and for $F,G\in J$ if $F\leq G$, then  $C_{G}\subseteq C_{F}$. 

\begin{theorem}\label{Theorem 4.5}
Let $S$ be a semigroup with a given net $\mathcal{F}$ in $\mathcal{P}_f(S)$, and let $A\subseteq S$. Let either of the following two conditions hold:
\begin{enumerate}
\item There is some $b\in\mathbb{N}$ such that $S$ is $b$-weakly right cancellative.
\item There is some $b\in\mathbb{N}$ such that $S$ is $b$-weakly left cancellative, and $\mathcal{F}$ satisfies the $(*)$ condition.
\end{enumerate}
\renewcommand{\labelenumi}{(\alph{enumi})}\renewcommand{\labelenumii}{(\roman{enumii})}
Then statements $(a),(b),(c)$ and $(d)$ are equivalent and implied by statement $(e)$. If $S$ is countable then all the statements are equivalent.
\begin{enumerate}
\item $A$ is a $D$-set with respect to $\mathcal{F}$.
\item There is a $FP$-tree $\mathsf{T}$ in $A$ such that for each $F\in \mathcal{P}_f(\mathsf{T}),\,\,d_{\mathcal{F}}^*\big(\bigcap_{f\in F}B_f\big)>0$.
\item There is a $*$-tree $\mathsf{T}$ in $A$ such that for each $F\in \mathcal{P}_f(\mathsf{T}),\,d_{\mathcal{F}}^*\big(\bigcap_{f\in F}B_f\big)>0$.
\item There is a downward directed family $\langle C_F\rangle_{F\in J}$ of subsets of $A$ such that 
\begin{enumerate}
\item for each ${F\in J}$ and each $s\in C_F$ there exists $G\in J$ with $C_G  \subseteq s^{-1}C_F$ and
\item for each $F\in J,\; d_{\mathcal{F}}^*(C_F)>0$. 
\end{enumerate}
\item There is a decreasing sequence $\langle C_n\rangle_{n\in\mathbb{N}}$ of subsets of $A$ such that 
\begin{enumerate}
\item for each $n\in \mathbb{N}$ and each $s\in C_n$, there exists $m\in \mathbb{N}$ with $C_m\subseteq s^{-1}C_n$ and
\item for each $n\in \mathbb{N},\; d_{\mathcal{F}}^*(C_n)>0$.
\end{enumerate}
\end{enumerate}
\end{theorem}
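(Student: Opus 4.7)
The plan is to establish the cycle (a) $\Rightarrow$ (b) $\Rightarrow$ (c) $\Rightarrow$ (d) $\Rightarrow$ (a) and (e) $\Rightarrow$ (d) in full generality, then close the loop (d) $\Rightarrow$ (e) under countability by a diagonal construction.

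For (a) $\Rightarrow$ (b), I would pick an idempotent $p \in D_{\mathcal{F}}^*$ with $A \in p$ and apply Lemma \ref{Lemma 4.3} to produce an $FP$-tree $\mathsf{T}$ in $A$ with every $B_f \in p$; any finite intersection $\bigcap_{f \in F} B_f$ then lies in $p \subseteq D_{\mathcal{F}}^*$, and so has positive upper Banach density. The step (b) $\Rightarrow$ (c) is Lemma \ref{Lemma 4.4}. For (c) $\Rightarrow$ (d), let $J = \mathcal{P}_f(\mathsf{T})$ directed by inclusion and set $C_F = \bigcap_{f \in F} B_f$. Each $B_f \subseteq A$ by the range clause in the definition of a tree in $A$, so $C_F \subseteq A$; downward directedness is trivial and the density clause is the hypothesis. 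For the closure condition, given $s \in C_F$, the $*$-tree property gives $B_{f^{\frown}s} \subseteq s^{-1}B_f$ for every $f \in F$, so setting $G = F \cup \{f^{\frown}s : f \in F\} \in J$ one obtains $C_G \subseteq \bigcap_{f \in F} B_{f^{\frown}s} \subseteq s^{-1} C_F$.

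The substantive part of the proof is (d) $\Rightarrow$ (a), where I would define $T = D_{\mathcal{F}}^* \cap \bigcap_{F \in J} \overline{C_F}$ and show $T$ is a nonempty compact subsemigroup of $\beta S$; Ellis' theorem then yields an idempotent $p \in T$, which lies in $D_{\mathcal{F}}^*$ and contains every $C_F$, hence $A$ itself, making $A$ a $D$-set. For nonemptiness I would first prove the auxiliary claim that any $E \subseteq S$ with $d_{\mathcal{F}}^*(E) > 0$ satisfies $\overline{E} \cap D_{\mathcal{F}}^* \neq \emptyset$: finite subadditivity of $d_{\mathcal{F}}^*$ (a direct consequence of its $\limsup$-type definition) gives $d_{\mathcal{F}}^*(E \cap B) \geq d_{\mathcal{F}}^*(E) - d_{\mathcal{F}}^*(S \setminus B) > 0$ for every $B$ in the filter $\mathcal{K}$ of Lemma \ref{Lemma 3.5}, so $\{E\} \cup \mathcal{K}$ has the finite intersection property and extends to an ultrafilter in $\overline{\mathcal{K}} = D_{\mathcal{F}}^*$. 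Coupled with downward directedness of $\langle C_F \rangle$, this supplies the FIP for the family defining $T$, and compactness finishes. Closedness is automatic. For the semigroup property, given $p,q \in T$ and $F \in J$, for each $s \in C_F$ choose $G \in J$ with $C_G \subseteq s^{-1}C_F$; since $C_G \in q$ we get $s^{-1}C_F \in q$, so $C_F \subseteq \{s \in S : s^{-1}C_F \in q\} \in p$, giving $C_F \in pq$. Membership $pq \in D_{\mathcal{F}}^*$ then follows because under hypothesis (1) (resp.\ (2)) $D_{\mathcal{F}}^*$ is a right (resp.\ left) ideal of $\beta S$.

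Finally, (e) $\Rightarrow$ (d) is immediate by taking $J = \mathbb{N}$. For the converse under countability, enumerate $S = \{s_j : j \in \mathbb{N}\}$ and build inductively a chain $F_1 \leq F_2 \leq \cdots$ in $J$: having fixed $F_n$, let $F_{n+1}$ be a common upper bound of $F_n$ together with all elements $G(i,j) \in J$ provided by (d)(i) for pairs $(i,j)$ with $i,j \leq n$ and $s_j \in C_{F_i}$, so that $C_{F_{n+1}} \subseteq s_j^{-1} C_{F_i}$ whenever the request is active. Putting $C_n = C_{F_n}$ then gives (e), since for $n$ and $s = s_j \in C_n$ the index $m = \max(n,j)+1$ delivers $C_m \subseteq s^{-1}C_n$. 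The main obstacle I expect is the (d) $\Rightarrow$ (a) step: keeping $T$ simultaneously closed under the multiplication of $\beta S$ and inside $D_{\mathcal{F}}^*$ requires both the local $s^{-1}$-closure built into (d) and the ideal structure of $D_{\mathcal{F}}^*$ coming from the cancellation hypotheses, so this is where conditions (1) and (2) are genuinely used.
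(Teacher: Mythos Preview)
Your cycle (a) $\Rightarrow$ (b) $\Rightarrow$ (c) $\Rightarrow$ (d) $\Rightarrow$ (a) and the implication (e) $\Rightarrow$ (d) are correct and match the paper's argument almost exactly. The only cosmetic difference is that in (d) $\Rightarrow$ (a) the paper cites \cite[Theorem 4.20]{hs} for the fact that $M=\bigcap_{F\in J}\overline{C_F}$ is a subsemigroup, whereas you verify the inclusion $C_F\in pq$ by hand; and the paper appeals to \cite[Theorem 3.11]{hs} for the nonemptiness of $\overline{C_F}\cap D_{\mathcal{F}}^*$, which you reprove via the filter $\mathcal{K}$ of Lemma~\ref{Lemma 3.5}. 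Both are the same argument in different clothing.

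The countable case is where you genuinely diverge. The paper closes the loop via (c) $\Rightarrow$ (e): since $S$ is countable the tree $\mathsf{T}$ is countable, one enumerates $\mathsf{T}=\{f_n:n\in\mathbb{N}\}$ and puts $C_n=\bigcap_{k=1}^n B_{f_k}$; for $s\in C_n$ one simply chooses $m$ large enough that $\{f_1^{\frown}s,\ldots,f_n^{\frown}s\}\subseteq\{f_1,\ldots,f_m\}$. You instead prove (d) $\Rightarrow$ (e) directly by enumerating $S$ and running a diagonal construction through the directed set $J$. Your argument is correct and has the minor conceptual advantage of not revisiting the tree, so it shows that the equivalence of (d) and (e) under countability is a purely order-theoretic fact about downward directed families. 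The paper's route is shorter because the enumeration of $\mathsf{T}$ already encodes the bookkeeping you perform by hand.
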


\begin{proof}
$\mathit{(a)}\Rightarrow\mathit{(b)}$: Pick an idempotent $p\in D_{\mathcal{F}}^{*}$ such that $A\in p$. Pick a $FP$-tree $\mathsf{T}$ in $A$ with $B_{f}\in p$ for all $f\in \mathsf{T}$ (Lemma \ref{Lemma 4.3}). Given $F\in\mathcal{P}_{f}(\mathsf{T})$, one has that for each $f\in F,\;B_{f}(\mathsf{T})\in p$, so $\bigcap_{f\in F}B_{f}(\mathsf{T})\in p$. Since $p\in D_{\mathcal{F}}^{*}$, we have $d_{\mathcal{F}}^{*}\big(\bigcap_{f\in F}B_{f}(\mathsf{T})\big)>0$.\par
$\mathit{(b)}\Rightarrow\mathit{(c)}$: This follows directly from Lemma \ref{Lemma 4.4}.\par
$\mathit{(c)}\Rightarrow\mathit{(d)}$: Let $\mathsf{T}$ be given as guaranteed by $\mathit{(c)}$. Let $J=\mathcal{P}_{f}(\mathsf{T})$ and for each $F\in J,\,C_{F}=\bigcap_{f\in F}B_{f}(\mathsf{T})$. Then $\langle C_F\rangle_{F\in J}$ is a downward directed family and $d_{\mathcal{F}}^{*}(C_{F})>0$ for each $F\in J$. Let $F\in J$ and let $s\in C_{F}$. Let $G=\{f^{\frown} s:f\in F\}$. Now for each $f\in F$ we have $B_{f^{\frown} s}\subseteq s^{-1}B_{f}$. So $C_{G}\subseteq s^{-1}C_{F}$.\par
$\mathit{(d)}\Rightarrow\mathit{(a)}$: First we claim that $\big\{C_F:F\in J\big\}$ satisfies the finite intersection property, i.e. for each $J_0\in\mathcal{P}_f(J)$, $\bigcap_{F\in J_0}C_F\neq\emptyset$. Note that as $J$ is a directed set, so each pair of elements has an upper bound. This implies that each finite collection of elements of $J$ also has an upper bound. For a given $J_0\in\mathcal{P}_f(J)$, let $G\in J$ be a common upper bound for all $F\in J_0$, i.e. $F\leq G$ for each $F\in J_0$. As $\langle C_F\rangle_{F\in J}$ is a downward directed family, so we have $C_G\subseteq C_F$ for each $F\in J_0$ and intersecting over $J_0$, we get $C_G\subseteq\bigcap_{F\in J_0}C_F$. But as $d_{\mathcal{F}}^*(C_G)>0$, hence in particulur $C_G\neq\emptyset$, and we get $\bigcap_{F\in J_0}C_F\neq\emptyset$, as claimed. Now by \cite[Theorem 4.20]{hs}, $M=\bigcap_{F\in J}\overline{C_{F}}$ is a compact subsemigroup of $\beta S$. So, it suffices to show $M\cap D_{\mathcal{F}}^{*}\neq\emptyset $, as this ensures the existence of an idempotent in $M\cap D_{\mathcal{F}}^{*}$. Then as $M\subseteq\overline{A}$, it follows from the definition that $A$ is a $D$-set. Now given $F\in J$, since $d_{\mathcal{F}}^{*}(C_{F})>0$, by Lemma \cite[Theore 3.11]{hs} $\overline{C_{F}}\cap D_{\mathcal{F}}^{*}\neq\emptyset$. Since $\langle C_{F}\rangle_{F\in J}$ is a downward directed family, so $M\cap D_{\mathcal{F}}^{*}\neq\emptyset$.\par
$\mathit{(e)}\Rightarrow\mathit{(d)}$: This follows trivially.\par
Assume now that $S$ is countable. We will show that $\mathit{(c)}$ implies $\mathit{(e)}$. Let $\mathsf{T}$ be guaranteed by $\mathit{(c)}$. So $\mathsf{T}$ is countable. Enumerate $\mathsf{T}$ as $\{f_{n}:n\in\mathbb{N}\}$. For each $n\in\mathbb{N}$, let $C_{n}=\bigcap_{k=1}^{n}B_{f_{k}}(\mathsf{T})$. Then for each $n\in \mathbb{N}$, $d_{\mathcal{F}}^*(C_n)>0$. Let $n\in\mathbb{N}$ and let $s\in C_{n}$. Pick $m\in\mathbb{N}$ such that $\{f_{k}^{\frown} s:k\in\{1,2,...,m\}\}\subseteq\{f_{1},f_{2},...,f_{m}\}$. Then $C_{m}\subseteq s^{-1}C_{n}$.
\end{proof}

\section{Product of D-sets}\label{Section 5}

In this section we shall determine when the Cartesian products of $D$-sets in arbitrary semigroups is a $D$-set. To begin with, suppose we have two semigroups $S$ and $W$ with given nets $\mathcal{E}=\langle E_i \rangle_{i\in I}$ and $\mathcal{F}=\langle F_j\rangle_{j\in J}$ respectively in $\mathcal{P}_f(S)$ and $\mathcal{P}_f(W)$. Consider the direct product semigroup $S\times W$. Order $I\times J$ by agreeing that $(i_1,j_1)\geq (i_0,j_0)$ provided $i_1\geq i_0$ and $j_1\geq j_0$. Define the product net $\mathcal{E}*\mathcal{F}=\langle E_i\times F_j\rangle_{(i,j)\in I\times J}$.

\begin{lemma}\label{Lemma 5.1}
Let $S$ and $W$ be two semigroups with given nets $\mathcal{E}=\langle E_i \rangle_{i\in I}$ and $\mathcal{F}=\langle F_j\rangle_{j\in J}$ respectively in $\mathcal{P}_f(S)$ and $\mathcal{P}_f(W)$. Let each of $S$ and $W$ have an identity element. Then for $A\subseteq S$ and $B\subseteq W,\,d_{\mathcal{E}*\mathcal{F}}^{*}(A\times B)=d_{\mathcal{E}}^{*}(A)\cdot d_{\mathcal{F}}^{*}(B)$. 
\end{lemma}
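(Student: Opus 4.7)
\emph{Plan.} The approach is to rewrite the upper Banach density as a nested $\limsup$ over the index and $\sup$ over the shift, and then use the product structure to factorise both operations. The first step is to observe that Definition 2.1 unpacks to
$$d_{\mathcal{E}}^{*}(A)=\inf_{i_{0}\in I}\,\sup_{i\geq i_{0}}\,\sup_{s\in S\cup\{1\}}\frac{|A\cap E_{i}\cdot s|}{|E_{i}|}=\limsup_{i\in I}\,\sup_{s\in S\cup\{1\}}\frac{|A\cap E_{i}\cdot s|}{|E_{i}|},$$
the outer $\sup$ over $s$ being attained because the numerator takes values in the finite set $\{0,1,\dots,|E_{i}|\}$. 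The analogous formulas hold for $d_{\mathcal{F}}^{*}(B)$ and $d_{\mathcal{E}*\mathcal{F}}^{*}(A\times B)$.

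Next I would exploit the product structure: for any $(s,t)\in S\times W$,
$$(A\times B)\cap\bigl((E_{i}\times F_{j})\cdot(s,t)\bigr)=(A\cap E_{i}\cdot s)\times(B\cap F_{j}\cdot t),$$
so $|(A\times B)\cap((E_{i}\times F_{j})(s,t))|=|A\cap E_{i}\cdot s|\cdot|B\cap F_{j}\cdot t|$ and $|E_{i}\times F_{j}|=|E_{i}|\cdot|F_{j}|$. The hypothesis that each of $S$ and $W$ has an identity is used precisely here: it lets the formal symbol $1$ in $(S\times W)\cup\{1\}$, $S\cup\{1\}$, and $W\cup\{1\}$ be replaced by the honest identities $(1_{S},1_{W})$, $1_{S}$, and $1_{W}$ respectively, so that the shifts decouple and the $\sup$ over them factors as a product:
$$\sup_{(s,t)\in(S\times W)\cup\{1\}}\frac{|A\cap E_{i}\cdot s|\cdot|B\cap F_{j}\cdot t|}{|E_{i}|\cdot|F_{j}|}=\alpha_{i}\beta_{j},$$
where $\alpha_{i}=\sup_{s\in S\cup\{1\}}|A\cap E_{i}\cdot s|/|E_{i}|$ and $\beta_{j}=\sup_{t\in W\cup\{1\}}|B\cap F_{j}\cdot t|/|F_{j}|$.

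Combining these two steps gives $d_{\mathcal{E}*\mathcal{F}}^{*}(A\times B)=\limsup_{(i,j)\in I\times J}\alpha_{i}\beta_{j}$, so it remains to verify the elementary identity
$$\limsup_{(i,j)\in I\times J}\alpha_{i}\beta_{j}=\Bigl(\limsup_{i\in I}\alpha_{i}\Bigr)\Bigl(\limsup_{j\in J}\beta_{j}\Bigr),$$
valid for bounded nonnegative nets. For fixed $(i_{0},j_{0})$ one has $\sup_{(i,j)\geq(i_{0},j_{0})}\alpha_{i}\beta_{j}=\bigl(\sup_{i\geq i_{0}}\alpha_{i}\bigr)\bigl(\sup_{j\geq j_{0}}\beta_{j}\bigr)$ because the variables are independent and the entries are nonnegative; the infimum over $(i_{0},j_{0})$ then distributes over the product because the two tail-suprema are bounded, nonnegative, and nonincreasing in their indices, so their infima are realised as limits that commute with multiplication.

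The main obstacle, to the extent there is one, is the bookkeeping around the formal shift-by-$1$ symbol, which is exactly where the identity hypothesis on $S$ and $W$ is needed. Without identities one cannot cleanly translate a ``no shift on one factor, an honest shift on the other'' into a single element of $(S\times W)\cup\{1\}$, so the factorisation of the shift $\sup$ would fail and the product-of-densities formula would in general fail as well. The remaining arithmetic---factoring cardinalities and commuting $\inf$ with a product of nonincreasing nonnegative tail suprema---is routine and requires no hypothesis beyond the ones stated.
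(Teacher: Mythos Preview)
Your argument is correct. The reformulation of $d_{\mathcal{E}}^{*}$ as $\inf_{i_0}\sup_{i\ge i_0}\alpha_i$ is justified exactly as you say (the ratio takes only finitely many values for each fixed $i$, so the inner supremum is attained), the identity hypothesis is used precisely where you indicate to replace the formal symbol $1$ by $(1_S,1_W)$ and thereby decouple the shift variables, and the final identity $\inf_{(i_0,j_0)}a(i_0)b(j_0)=(\inf a)(\inf b)$ for nonnegative functions needs nothing more than the definition of infimum.

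The paper proceeds differently: it argues each inequality directly by contradiction, choosing $\epsilon>0$ and an intermediate $\gamma$ and then chasing through the quantifiers in the raw definition of $d^{*}$. Your route is more structural: you first compress the definition into a $\limsup$, then reduce the lemma to the elementary fact that a $\limsup$ over a product directed set of a separable nonnegative product equals the product of the $\limsup$s. What your approach buys is a cleaner isolation of the two ingredients (the counting factorisation and the order-theoretic identity), and it makes transparent why the identity hypothesis is exactly what is needed. The paper's approach, while less modular, has the virtue of staying closer to the original definition and not requiring the reader to verify the $\limsup$ reformulation first.
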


\begin{proof}
Let $\alpha_1=d_{\mathcal{E}}^*(A)$ and let $\beta_1=d_{\mathcal{F}}^*(B)$. Suppose first that $d_{\mathcal{E}*\mathcal{F}}^*(A\times B)>\alpha_1\cdot \beta_1$. Pick $\epsilon >0$ and $\gamma$ such that $d_{\mathcal{E}*\mathcal{F}}^*(A\times B) > \gamma > (\alpha_1 + \epsilon) \cdot (\beta_1 + \epsilon)$. Pick $i_0\in I$ such that for all $i\geq i_0$ and all $s\in S$, $|A \cap (E_i \cdot s)| < (\alpha_1 + \epsilon) \cdot |E_i|$ and pick $j_0\in J$ such that for all $j\geq j_0$ and all $w \in W$, $|B \cap (F_j \cdot w)| < (\beta_1 + \epsilon) \cdot |F_j|$. Pick $(i,j) \in I \times J$ and $(s,w) \in S \times W$ such that $\big|(A \times B) \cap \big((E_i \times F_j) \cdot (s,w)\big)\big| \geq \gamma \cdot |E_i \times F_j|$. Then $\gamma \cdot |E_i| \cdot |F_j| = \gamma \cdot |E_i \times F_j| \leq \big|(A \times B) \cap \big((E_i \times F_j) \cdot (s,w)\big)\big| = |A \cap (E_i \cdot s)| \cdot |B \cap (F_j \cdot w)| < (\alpha_1 + \epsilon) \cdot |E_i| \cdot (\beta_1 + \epsilon) \cdot |F_j|$, and we have that $\gamma < (\alpha_1 + \epsilon) \cdot (\beta_1 + \epsilon)$, a contradiction. \par
Now suppose that $d_{\mathcal{E}* \mathcal{F}}^*(A\times B) < \alpha_1 \cdot \beta_1$. Pick $\epsilon > 0$ and $\gamma$ such that $d_{\mathcal{E}* \mathcal{F}}^*(A\times B) < \gamma < (\alpha_1 - \epsilon) \cdot (\beta_1 - \epsilon)$. Pick $(i_0,j_0)\in I\times J$ such that for all $(i,j) \geq (i_0,j_0)$ and all $(s,w)\in S\times W$, $\big|(A \times B) \cap \big((E_i \times F_j) \cdot (s,w)\big)\big| < \gamma \cdot |E_i \times F_j|$. Pick $i \geq i_0$ and $s\in S$ such that $|A \cap (E_i \cdot s)| \geq (\alpha_1 - \epsilon) \cdot |E_i|$. Also pick $j \geq j_0$ and $w \in W$ such that $|B \cap (F_j \cdot w)|\geq (\beta_1 - \epsilon) \cdot |F_j|$. Then $\gamma \cdot |E_i| \cdot |F_j| = \gamma \cdot |E_i \times F_j| > \big|(A \times B) \cap \big((E_i \times F_j) \cdot (s,w)\big)\big| = |A \cap (E_i \cdot s)| \cdot |B \cap (F_j \cdot w)| \geq (\alpha_1 - \epsilon) \cdot (\beta_1 - \epsilon) \cdot |E_i| \cdot |F_j|$, and we have that $\gamma > (\alpha_1 - \epsilon) \cdot (\beta_1 - \epsilon)$, a contradiction. Thus, we have $d_{\mathcal{E}*\mathcal{F}}^{*}(A\times B)=d_{\mathcal{E}}^{*}(A)\cdot d_{\mathcal{F}}^{*}(B)$. 
\end{proof}

\begin{remark}
The conclusion of Lemma \ref{Lemma 5.1} may not be true if the semigroups $S$ and $W$ do not have identity elements. For example consider the semigroup $(\mathbb{N},+)$. Consider the two nets $\mathcal{E} = \langle E_n \rangle_{n\in\mathbb{N}}$ and $\mathcal{F} = \langle F_n \rangle_{n\in\mathbb{N}}$ where $E_n = \{1\}$ and $F_n = \{2\}$ for all $n\in\mathbb{N}$. Then $d_{\mathcal{E}}^*\big(\{2\}\big)=d_{\mathcal{F}}^*\big(\{2\}\big)=1$, but $d_{\mathcal{E} * \mathcal{F}}^*\big(\{(2,2)\}\big)=0$.
\end{remark}

\begin{lemma}\label{Lemma 5.3}
Let $S$ and $W$ be two semigroups. Let $\tilde{\iota}:\beta(S\times W)\rightarrow\beta S\times\beta W$ be the continuous extension of the inclusion map $\iota : S\times W \rightarrow \beta S \times \beta W$. Then for $p\in\beta S$ and $q\in\beta W$
$$\bigcap_{A\in p,B\in q}\overline{A\times B}\subseteq\tilde{\iota}^{-1}(\{(p,q)\})$$
\end{lemma}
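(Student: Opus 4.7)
The plan is to take an arbitrary ultrafilter $r \in \bigcap_{A \in p,\, B \in q}\overline{A \times B}$ and show directly that $\tilde{\iota}(r) = (p,q)$. The first ingredient is an explicit description of $\tilde{\iota}$: letting $\pi_{S} : S \times W \to S$ and $\pi_{W} : S \times W \to W$ be the coordinate projections with continuous extensions $\tilde{\pi}_{S} : \beta(S \times W) \to \beta S$ and $\tilde{\pi}_{W} : \beta(S \times W) \to \beta W$, the map $r \mapsto \bigl(\tilde{\pi}_{S}(r), \tilde{\pi}_{W}(r)\bigr)$ is continuous into the Hausdorff space $\beta S \times \beta W$ and agrees with $\iota$ on the dense subset $S \times W$, so by uniqueness of continuous extensions it must coincide with $\tilde{\iota}$.

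Next, I would translate membership in the intersection into a statement about $r$. Since $\overline{A \times B} = \{r \in \beta(S \times W) : A \times B \in r\}$, the condition $r \in \bigcap_{A \in p,\, B \in q}\overline{A \times B}$ is equivalent to asserting that $A \times B \in r$ for every $A \in p$ and every $B \in q$.

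To conclude, I verify the two coordinates of $\tilde{\iota}(r)$ separately. Recall that for any $C \subseteq S$ we have $C \in \tilde{\pi}_{S}(r)$ if and only if $\pi_{S}^{-1}(C) = C \times W \in r$. Given $A \in p$, the choice $B = W \in q$ in the hypothesis yields $A \times W \in r$, so $A \in \tilde{\pi}_{S}(r)$. Thus $p \subseteq \tilde{\pi}_{S}(r)$, and since both are ultrafilters on $S$ this inclusion forces $p = \tilde{\pi}_{S}(r)$. The symmetric argument, with the roles of the two coordinates exchanged and using $A = S \in p$, gives $q = \tilde{\pi}_{W}(r)$. Combining, $\tilde{\iota}(r) = (p,q)$, which is precisely the statement $r \in \tilde{\iota}^{-1}(\{(p,q)\})$.

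The only step that requires real care, and which I would treat as the main obstacle, is the opening identification of $\tilde{\iota}$ with the pair $(\tilde{\pi}_{S}, \tilde{\pi}_{W})$ via the universal property of the Stone-\v{C}ech compactification; once that is in place, the remainder is a direct unpacking of the ultrafilter membership condition and the density of $W$ in $\beta W$ (respectively $S$ in $\beta S$) which makes $W \in q$ and $S \in p$ available as the key choices.
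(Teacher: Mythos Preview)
Your proof is correct. The paper's argument is a bit different in flavor: rather than identifying $\tilde{\iota}$ with the pair of projection extensions and computing $\tilde{\pi}_S(r)$, $\tilde{\pi}_W(r)$ explicitly, it argues purely topologically via the chain
\[
\tilde{\iota}\Bigl(\bigcap_{A\in p,\,B\in q}\overline{A\times B}\Bigr)
\subseteq \bigcap_{A\in p,\,B\in q}\tilde{\iota}\bigl(\overline{A\times B}\bigr)
\subseteq \bigcap_{A\in p,\,B\in q}\overline{\tilde{\iota}(A\times B)}
= \bigcap_{A\in p,\,B\in q}\overline{A}\times\overline{B}
= \{(p,q)\},
\]
using only continuity of $\tilde{\iota}$ and the identification $\overline{\tilde{\iota}(A\times B)}=\overline{A}\times\overline{B}$. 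Your route trades this for an explicit combinatorial description of $\tilde{\iota}$ via \cite[Lemma~3.30]{hs} and the trick of specializing $B=W$, $A=S$; this is slightly longer but perhaps more transparent about why the equality $\tilde{\iota}(r)=(p,q)$ holds at the level of ultrafilter membership. Neither approach has a real advantage here---both are short and self-contained.
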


\begin{proof}
The subset $\bigcap_{A\in p,B\in q}\overline{A\times B}$ of $\beta (S\times W)$ is nonempty as the collection $\big\{\overline{A\times B}:A\in p,B\in q\big\}$ satisfies the finite intersection property. Note that $\tilde{\iota}\big(\bigcap_{A\in p,B\in q}\overline{A\times B}\big)\subseteq\bigcap_{A\in p,B\in q}\tilde{\iota}\big(\overline{A\times B}\big)$ and by the continuity of $\tilde{\iota}$ we have $\bigcap_{A\in p,B\in q}\tilde{\iota}\big(\overline{A\times B}\big)\subseteq\bigcap_{A\in p,B\in q}\overline{\tilde{\iota}(A\times B)}$. But $\overline{\tilde{\iota}(A\times B)}=\overline{A}\times\overline{B}$ and finally we have $\tilde{\iota}\big(\bigcap_{A\in p,B\in q}\overline{A\times B}\big)\subseteq \bigcap_{A\in p, B\in q}\overline{A}\times \overline{B}= \{(p,q)\}$, i.e. $\bigcap_{A\in p,B\in q}\overline{A\times B}\subseteq\tilde{\iota}^{-1}(\{(p,q)\})$.
\end{proof}

\begin{lemma}\label{Lemma 5.4}
Let $S$ and $W$ be two semigroups with given nets $\mathcal{E}$ and $\mathcal{F}$ respectively in $\mathcal{P}_f(S)$ and $\mathcal{P}_f(W)$. Let each of $S$ and $W$ have an identity element. Let $\tilde{\iota}:\beta(S\times W)\rightarrow\beta S\times\beta W$ be the continuous extension of the inclusion map $\iota : S\times W \rightarrow \beta S \times \beta W$. Then for any $p$ in $D_{\mathcal{E}}^{*}$ and $q$ in $D_{\mathcal{F}}^{*},\,\tilde{\iota}^{-1}(\{(p,q)\})\cap D_{\mathcal{E}*\mathcal{F}}^{*}\neq\emptyset $. 
\end{lemma}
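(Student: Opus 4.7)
The plan is to combine Lemma \ref{Lemma 5.1} with Lemma \ref{Lemma 5.3} and a compactness argument. The target set $\tilde{\iota}^{-1}(\{(p,q)\}) \cap D_{\mathcal{E}*\mathcal{F}}^{*}$ contains $\bigcap_{A\in p,B\in q}\overline{A\times B}\cap D_{\mathcal{E}*\mathcal{F}}^{*}$ by Lemma \ref{Lemma 5.3}, so it suffices to show this intersection is nonempty.

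First I would verify, for each fixed pair $A\in p$ and $B\in q$, that $\overline{A\times B}\cap D_{\mathcal{E}*\mathcal{F}}^{*}\neq\emptyset$. Since $p\in D_{\mathcal{E}}^{*}$ and $q\in D_{\mathcal{F}}^{*}$, we have $d_{\mathcal{E}}^{*}(A)>0$ and $d_{\mathcal{F}}^{*}(B)>0$, and Lemma \ref{Lemma 5.1} (using the assumed identities in $S$ and $W$) yields $d_{\mathcal{E}*\mathcal{F}}^{*}(A\times B)=d_{\mathcal{E}}^{*}(A)\cdot d_{\mathcal{F}}^{*}(B)>0$. The standard fact \cite[Theorem 3.11]{hs}, invoked in exactly this form in the proof of Theorem \ref{Theorem 4.5}, then produces an ultrafilter on $S\times W$ containing $A\times B$ all of whose members have positive $d_{\mathcal{E}*\mathcal{F}}^{*}$-density, i.e.\ a point of $\overline{A\times B}\cap D_{\mathcal{E}*\mathcal{F}}^{*}$.

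Next I would promote this to the full intersection by a finite intersection property argument. Given $A_1,\ldots,A_n\in p$ and $B_1,\ldots,B_n\in q$, set $A=\bigcap_{k=1}^{n}A_k\in p$ and $B=\bigcap_{k=1}^{n}B_k\in q$; then $A\times B\subseteq\bigcap_{k=1}^{n}(A_k\times B_k)$, so $\overline{A\times B}\cap D_{\mathcal{E}*\mathcal{F}}^{*}\subseteq\bigcap_{k=1}^{n}\bigl(\overline{A_k\times B_k}\cap D_{\mathcal{E}*\mathcal{F}}^{*}\bigr)$, and the previous step guarantees the left side is nonempty. Each $\overline{A_k\times B_k}$ is clopen in $\beta(S\times W)$ and $D_{\mathcal{E}*\mathcal{F}}^{*}$ is closed (as noted after Definition \ref{Definition 2.2}), so we are intersecting a downward-directed family of nonempty closed subsets of the compact space $\beta(S\times W)$. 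Compactness gives $\bigcap_{A\in p,\,B\in q}\bigl(\overline{A\times B}\cap D_{\mathcal{E}*\mathcal{F}}^{*}\bigr)\neq\emptyset$, and Lemma \ref{Lemma 5.3} places this nonempty intersection inside $\tilde{\iota}^{-1}(\{(p,q)\})$, finishing the proof.

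There is no serious obstacle here: Lemma \ref{Lemma 5.1} gives the density input, Lemma \ref{Lemma 5.3} identifies the correct fiber, and the only real work is observing that finite intersections of basic neighborhoods $\overline{A\times B}$ with $A\in p$, $B\in q$ can be realized as $\overline{A'\times B'}$ with $A'\in p$ and $B'\in q$, so the finite intersection property reduces to the single-pair nonemptiness already established.
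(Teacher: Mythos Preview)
Your proposal is correct and follows essentially the same route as the paper's own proof: reduce via Lemma \ref{Lemma 5.3} to showing $\bigcap_{A\in p,B\in q}\overline{A\times B}\cap D_{\mathcal{E}*\mathcal{F}}^{*}\neq\emptyset$, verify the finite intersection property by using closure of $p$ and $q$ under finite intersections to reduce to a single pair $(A,B)$, and then apply \cite[Theorem 3.11]{hs} together with $d_{\mathcal{E}*\mathcal{F}}^{*}(A\times B)>0$ from Lemma \ref{Lemma 5.1}. The only difference is cosmetic: the paper phrases the last step via the partition-regular family $\mathcal{R}=\{C:d_{\mathcal{E}*\mathcal{F}}^{*}(C)>0\}$ and leaves the appeal to Lemma \ref{Lemma 5.1} implicit, whereas you spell it out.
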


\begin{proof}
By Lemma \ref{Lemma 5.3}, it suffices to show that $\bigcap_{A\in p,B\in q}\overline{A\times B}\cap D_{\mathcal{E}* \mathcal{F}}^*\neq\emptyset$. This will follow if we show that the collection of closed sets $\big\{\overline{A\times B}\cap D_{\mathcal{E}* \mathcal{F}}^*:A\in p,B\in q\big\}$ has finite intersection property. But as each of the ultrafilters $p$ and $q$ is closed under any finite intersection of its elements, so it suffices to show that for each $A\in p$ and $B\in q$, we have $\overline{A\times B}\cap D_{\mathcal{E}* \mathcal{F}}^*\neq \emptyset$. To prove this, consider the collection $\mathcal{R}=\{C\subseteq S\times W:d_{\mathcal{E}*\mathcal{F}}^{*}(C)>0\}$. Note that $\emptyset\notin\mathcal{R},\mathcal{R}^{\uparrow}=\mathcal{R}\text{ and }\mathcal{R}$ is partition regular. Now for each $A\in p$ and $B\in q$, we have $A\times B\in\mathcal{R}$ and therefore \cite[Theorem 3.11]{hs} provides us an ultrafilter $r$ in $\beta(S\times W)$ such that $A\times B\in r\subseteq\mathcal{R}$, i.e. $r\in \overline{A\times B}\cap D_{\mathcal{E}*\mathcal{F}}^{*}$.
\end{proof}

\begin{lemma}\label{Lemma 5.5}
Let $S$ and $Y$ be two semigroups and $\pi:S\rightarrow Y$ be an onto homomorphism. Then $\pi$ extends continuously to an onto
homomorphism $\tilde{\pi}:\beta S\rightarrow\beta Y$ and for any $q\in\beta Y,\,\bigcap_{B\in q}\overline{\pi^{-1}(B)}\subseteq\tilde{\pi}^{-1}(\{q\})$. 
\end{lemma}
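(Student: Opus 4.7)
The plan is to split the statement into two parts: establishing the extension $\tilde\pi$ with its algebraic and topological properties, and then verifying the set-theoretic inclusion. For the first part, I would invoke the universal property of the Stone-\v Cech compactification. Composing $\pi$ with the inclusion $Y\hookrightarrow\beta Y$ gives a continuous map from the discrete space $S$ into the compact Hausdorff space $\beta Y$, which extends uniquely to a continuous $\tilde\pi:\beta S\to\beta Y$. That $\tilde\pi$ is a homomorphism is a standard two-step density argument: one first checks $\tilde\pi(sq)=\tilde\pi(s)\tilde\pi(q)$ for $s\in S$ and $q\in\beta S$ by noting that $q\mapsto\tilde\pi(sq)$ and $q\mapsto\tilde\pi(s)\tilde\pi(q)$ are both continuous (left multiplication by $s$ in $\beta S$ and by $\pi(s)$ in $\beta Y$ are continuous, since $S$ and $Y$ lie in the respective topological centers) and agree on the dense subset $S$; one then promotes this to arbitrary $p\in\beta S$ using that right multiplication is continuous in both $\beta S$ and $\beta Y$. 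Surjectivity follows because $\tilde\pi(\beta S)$ is compact, hence closed in $\beta Y$, and contains $\pi(S)=Y$, which is dense in $\beta Y$. All of this is packaged in \cite[Corollary 4.22]{hs}.

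For the inclusion, the key identity to establish is $\tilde\pi^{-1}(\overline B)=\overline{\pi^{-1}(B)}$ for every $B\subseteq Y$. Since $\overline B$ is clopen in $\beta Y$ and $\tilde\pi$ is continuous, $\tilde\pi^{-1}(\overline B)$ is clopen in $\beta S$; moreover for $s\in S$ one has $s\in\tilde\pi^{-1}(\overline B)$ iff $\pi(s)\in B$ iff $s\in\pi^{-1}(B)$, so $\tilde\pi^{-1}(\overline B)\cap S=\pi^{-1}(B)$. Since a clopen subset of $\beta S$ is the closure of its intersection with the dense set $S$, we get $\tilde\pi^{-1}(\overline B)=\overline{\pi^{-1}(B)}$, or equivalently $B\in\tilde\pi(p)$ iff $\pi^{-1}(B)\in p$ for every $p\in\beta S$.

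Now suppose $p\in\bigcap_{B\in q}\overline{\pi^{-1}(B)}$. By the above equivalence, $B\in\tilde\pi(p)$ for every $B\in q$, i.e.\ $q\subseteq\tilde\pi(p)$. Since both $q$ and $\tilde\pi(p)$ are ultrafilters on $Y$, maximality forces equality, so $\tilde\pi(p)=q$ and $p\in\tilde\pi^{-1}(\{q\})$. The one step that demands care is the clopen-preimage identification in the middle paragraph; once that is in place, the rest is routine, being the standard fact that a filter containing an ultrafilter must coincide with it.
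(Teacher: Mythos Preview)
Your argument is correct and in fact establishes the equality $\bigcap_{B\in q}\overline{\pi^{-1}(B)}=\tilde\pi^{-1}(\{q\})$, slightly more than the lemma asks. The paper's proof takes a shorter route for the bare inclusion: it writes $\{q\}=\bigcap_{B\in q}\overline{B}$, so that $\tilde\pi^{-1}(\{q\})=\bigcap_{B\in q}\tilde\pi^{-1}(\overline{B})$, and then observes that continuity of $\tilde\pi$ alone gives $\overline{\pi^{-1}(B)}\subseteq\tilde\pi^{-1}(\overline{B})$ (the right-hand side is closed and contains $\pi^{-1}(B)$), after which intersecting over $B\in q$ finishes. Your approach instead proves the sharper identity $\tilde\pi^{-1}(\overline{B})=\overline{\pi^{-1}(B)}$ by exploiting that $\overline{B}$ is clopen, and then appeals to ultrafilter maximality; this buys you the reverse inclusion as a bonus, at the cost of a couple of extra lines. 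Both are standard and valid; the paper's version is simply the minimal argument for the one-sided containment stated.
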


\begin{proof}
By \cite[Corollary
4.22 and Exercise 3.4.1]{hs}, the continuous extension $\tilde{\pi}:\beta S \rightarrow \beta Y$ is an onto homomorphism. Now pick an ultrafilter $q\in \beta Y$ and note that $\{q\}=\bigcap_{B\in q}\overline{B}$. Hence $\tilde{\pi}^{-1}(\{q\})=\tilde{\pi}^{-1}\big(\bigcap_{B\in q}\overline{B}\big)=\bigcap_{B\in q}\tilde{\pi}^{-1}\big(\overline{B}\big)\supseteq \bigcap_{B\in q}\overline{\pi^{-1}(B)}$ where the last inclusion follows from the continuity of $\tilde{\pi}$.
\end{proof}

We omit the proof of the following lemma, since it is essentially similar to the proof of Lemma \ref{Lemma 5.4}.

\begin{lemma}\label{Lemma 5.6}
Let $S$ and $Y$ be two semigroups with given nets $\mathcal{E}_1$ and $\mathcal{E}_2$ respectively in $\mathcal{P}_f(S)$ and $\mathcal{P}_f(Y)$. Let each of $S$ and $Y$ have an identity element. Let $\pi:S\rightarrow Y$ be an onto homomorphism such that $d_{\mathcal{E}_{1}}^{*}(\pi^{-1}(B))=d_{\mathcal{E}_{2}}^{*}(B)$ for all $B\subseteq Y$. Let $\tilde{\pi}:\beta S \rightarrow \beta Y$ be the continuous extension of $\pi$. Then for any $q$ in $D_{\mathcal{E}_{2}}^{*},\,\tilde{\pi}^{-1}(q)\cap D_{\mathcal{E}_{1}}^{*}\neq\emptyset $. 
\end{lemma}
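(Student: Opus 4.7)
The plan is to mimic the proof of Lemma \ref{Lemma 5.4}, replacing the inclusion map $\iota$ with the homomorphism $\pi$ and using Lemma \ref{Lemma 5.5} in place of Lemma \ref{Lemma 5.3}. First, by Lemma \ref{Lemma 5.5} we have $\bigcap_{B\in q}\overline{\pi^{-1}(B)}\subseteq\tilde{\pi}^{-1}(\{q\})$, so it suffices to exhibit an element of $\bigcap_{B\in q}\overline{\pi^{-1}(B)}\cap D_{\mathcal{E}_1}^{*}$.

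Next I would argue that the family $\bigl\{\overline{\pi^{-1}(B)}\cap D_{\mathcal{E}_1}^{*}:B\in q\bigr\}$ of closed subsets of $\beta S$ has the finite intersection property; since $q$ is a filter and $\pi^{-1}(B_1\cap B_2)=\pi^{-1}(B_1)\cap\pi^{-1}(B_2)$, the task reduces to showing $\overline{\pi^{-1}(B)}\cap D_{\mathcal{E}_1}^{*}\neq\emptyset$ for each single $B\in q$. Compactness of $\beta S$ will then yield a point in the full intersection.

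For this single-set step, I would invoke the density-preservation hypothesis: since $q\in D_{\mathcal{E}_2}^{*}$, for every $B\in q$ we have $d_{\mathcal{E}_2}^{*}(B)>0$, and hence $d_{\mathcal{E}_1}^{*}(\pi^{-1}(B))=d_{\mathcal{E}_2}^{*}(B)>0$. Now consider the collection $\mathcal{R}=\{C\subseteq S:d_{\mathcal{E}_1}^{*}(C)>0\}$; it is upward closed, does not contain $\emptyset$, and is partition regular (if $C=C_1\cup C_2$ then $d_{\mathcal{E}_1}^{*}(C)\leq d_{\mathcal{E}_1}^{*}(C_1)+d_{\mathcal{E}_1}^{*}(C_2)$). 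Since $\pi^{-1}(B)\in\mathcal{R}$, \cite[Theorem 3.11]{hs} provides an ultrafilter $r\in\beta S$ with $\pi^{-1}(B)\in r\subseteq\mathcal{R}$, i.e.\ $r\in\overline{\pi^{-1}(B)}\cap D_{\mathcal{E}_1}^{*}$, as required.

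I do not expect any real obstacle here: the only substantive input beyond the Lemma \ref{Lemma 5.4} template is the hypothesis $d_{\mathcal{E}_1}^{*}(\pi^{-1}(B))=d_{\mathcal{E}_2}^{*}(B)$, which is exactly what transports membership in $\mathcal{R}$ back from $Y$ to $S$. The identity elements and surjectivity of $\pi$ are needed only so that Lemma \ref{Lemma 5.5} applies and the density-preservation assumption is meaningful; beyond that, the argument is a direct translation.
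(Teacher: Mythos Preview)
Your proposal is correct and follows precisely the approach the paper intends: the authors omit the proof of Lemma~\ref{Lemma 5.6} with the remark that it is ``essentially similar to the proof of Lemma~\ref{Lemma 5.4}'', and your argument is exactly that translation, with Lemma~\ref{Lemma 5.5} replacing Lemma~\ref{Lemma 5.3} and the density-preservation hypothesis replacing the appeal to Lemma~\ref{Lemma 5.1}. One small remark: Lemma~\ref{Lemma 5.5} itself does not require identity elements, so that hypothesis is not actually used in your proof (nor, apparently, is it needed for the lemma at all); but this does not affect correctness.
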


\begin{proposition}\label{Proposition 5.7}
Let $S$ and $Y$ be two semigroups with given nets $\mathcal{E}_1$ and $\mathcal{E}_2$ respectively in $\mathcal{P}_f(S)$ and $\mathcal{P}_f(Y)$. Let each of $S$ and $Y$ have an identity element. Let either of the following two conditions hold:
\begin{enumerate}
\item There is some $b\in\mathbb{N}$ such that $S$ is $b$-weakly right cancellative.
\item There is some $b\in\mathbb{N}$ such that $S$ is $b$-weakly left cancellative, and $\mathcal{E}_1$ satisfies the $(*)$ condition.
\end{enumerate}
Let $\pi:S\rightarrow Y$ be an onto homomorphism such that $d_{\mathcal{E}_{1}}^{*}(\pi^{-1}(B))=d_{\mathcal{E}_{2}}^{*}(B)$
for all $B\subseteq Y$. Then the induced homomorphism $\tilde{\pi}:\beta S\rightarrow\beta Y$ sends $D_{\mathcal{E}_{1}}^{*}$ onto $D_{\mathcal{E}_{2}}^{*}$. Also, $\tilde{\pi}$ sends the idempotents of $D_{\mathcal{E}_{1}}^{*}$ onto the idempotents of $D_{\mathcal{E}_{2}}^{*}$. In particular $\pi$ sends the $D$-sets (with respect to $\mathcal{E}_1$) in $S$ onto the $D$-sets (with respect to $\mathcal{E}_2$) in $Y$. 
\end{proposition}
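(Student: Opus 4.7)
The plan is to verify the three assertions in turn, leaning on Lemma \ref{Lemma 5.6} and the fact (established from Theorem \ref{Theorem 2.5} and its preliminaries) that under condition $(1)$ or $(2)$, $D_{\mathcal{E}_1}^*$ is a closed subsemigroup of $\beta S$ and $D_{\mathcal{E}_2}^*$ is a closed subsemigroup of $\beta Y$.

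First I would show $\tilde{\pi}(D_{\mathcal{E}_1}^*) = D_{\mathcal{E}_2}^*$. For the inclusion $\tilde{\pi}(D_{\mathcal{E}_1}^*) \subseteq D_{\mathcal{E}_2}^*$, pick $p\in D_{\mathcal{E}_1}^*$ and any $B\in\tilde{\pi}(p)$; by the definition of $\tilde{\pi}$, we have $\pi^{-1}(B)\in p$, so $d_{\mathcal{E}_1}^*(\pi^{-1}(B))>0$ and the hypothesis $d_{\mathcal{E}_1}^*(\pi^{-1}(B)) = d_{\mathcal{E}_2}^*(B)$ forces $d_{\mathcal{E}_2}^*(B)>0$. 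For the reverse inclusion, given $q\in D_{\mathcal{E}_2}^*$, Lemma \ref{Lemma 5.6} furnishes a $p\in\tilde{\pi}^{-1}(\{q\})\cap D_{\mathcal{E}_1}^*$, and then $\tilde{\pi}(p)=q$.

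The main step is sending idempotents onto idempotents. The forward direction is automatic since $\tilde{\pi}$ is a homomorphism. For the other direction, let $q$ be an idempotent in $D_{\mathcal{E}_2}^*$ and consider $T := \tilde{\pi}^{-1}(\{q\})\cap D_{\mathcal{E}_1}^*$. Because $\beta Y$ is Hausdorff, $\{q\}$ is closed, so $\tilde{\pi}^{-1}(\{q\})$ is closed by continuity; intersecting with the closed set $D_{\mathcal{E}_1}^*$ gives that $T$ is closed, hence compact. It is nonempty by Lemma \ref{Lemma 5.6}. It is also a subsemigroup: for $p_1,p_2\in T$, the homomorphism property yields $\tilde{\pi}(p_1 p_2)=\tilde{\pi}(p_1)\tilde{\pi}(p_2)=qq=q$, and $D_{\mathcal{E}_1}^*$ is itself a subsemigroup. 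As a closed subsemigroup of the compact right topological semigroup $\beta S$, $T$ inherits the right topological structure, so Ellis' theorem \cite[Theorem 2.5]{hs} gives an idempotent $p\in T$ with $\tilde{\pi}(p)=q$, as required. The hard part is really just keeping track that $T$ remains right topological and nonempty; once that is in place, Ellis' theorem does the work.

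Finally, for the statement about $D$-sets, suppose $A\subseteq S$ is a $D$-set with respect to $\mathcal{E}_1$; choose an idempotent $p\in D_{\mathcal{E}_1}^*$ with $A\in p$. Since $A\subseteq\pi^{-1}(\pi(A))$ and $p$ is upward-closed, $\pi^{-1}(\pi(A))\in p$, which means $\pi(A)\in\tilde{\pi}(p)$; by the previous paragraph $\tilde{\pi}(p)$ is an idempotent in $D_{\mathcal{E}_2}^*$, so $\pi(A)$ is a $D$-set with respect to $\mathcal{E}_2$. Conversely, given any $D$-set $B\subseteq Y$ contained in an idempotent $q\in D_{\mathcal{E}_2}^*$, lift $q$ to an idempotent $p\in D_{\mathcal{E}_1}^*$ with $\tilde{\pi}(p)=q$; then $\pi^{-1}(B)\in p$, so $A:=\pi^{-1}(B)$ is a $D$-set in $S$, and surjectivity of $\pi$ gives $\pi(A)=B$. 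This proves $\pi$ maps $D$-sets onto $D$-sets.
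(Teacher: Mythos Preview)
Your proof is correct and follows essentially the same approach as the paper: use the density-preserving hypothesis together with Lemma \ref{Lemma 5.6} to get $\tilde{\pi}(D_{\mathcal{E}_1}^*)=D_{\mathcal{E}_2}^*$, then apply Ellis' theorem to the nonempty compact subsemigroup $\tilde{\pi}^{-1}(\{q\})\cap D_{\mathcal{E}_1}^*$ to lift idempotents; your treatment of the $D$-set statement is in fact more explicit than the paper's. One small remark: conditions $(1)$ and $(2)$ are imposed only on $S$ and $\mathcal{E}_1$, so they do not directly yield that $D_{\mathcal{E}_2}^*$ is a subsemigroup of $\beta Y$ as you assert in your opening paragraph---but you never actually use that claim, so the argument is unaffected.
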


\begin{proof}
Let $p\in D_{\mathcal{E}_{1}}^{*}$ be given. Then by \cite[Lemma 3.30]{hs}, $\tilde{\pi}(p)=\{C\subseteq Y:\pi^{-1}(C)\in p\}$. So for any $E$ in $\tilde{\pi}(p),\,d_{\mathcal{E}_{2}}^{*}(E)=d_{\mathcal{E}_{1}}^{*}(\pi^{-1}(E))>0$, and thus $\tilde{\pi}(p)$ is in $D_{\mathcal{E}_{2}}^{*}$. This together with Lemma \ref{Lemma 5.6}  now shows that $\tilde{\pi}$ sends $D_{\mathcal{E}_{1}}^{*}$ onto $D_{\mathcal{E}_{2}}^{*}$.\par
Moreover, since $\tilde{\pi}$ is a homomorphism, so it will send the idempotents of $D_{\mathcal{E}_1}^*$ into the idempotents of $D_{\mathcal{E}_2}^*$. Also, for an idempotent $q$ in $D_{\mathcal{E}_2}^*$, $\tilde{\pi}^{-1}(q)\cap D_{\mathcal{E}_{1}}^{*}\neq\emptyset$ by Lemma \ref{Lemma 5.6} and hence $\tilde{\pi}^{-1}(q)\cap D_{\mathcal{E}_{1}}^{*}$ is a closed subsemigroup of $\beta S$. Now by Ellis' theorem, pick an idempotent $p$ in $\tilde{\pi}^{-1}(q)\cap D_{\mathcal{E}_{1}}^{*}$ and we have $\tilde{\pi}(p)=q$ where $p$ is an idempotent in $D_{\mathcal{E}_1}^*$. Thus, $\tilde{\pi}$ sends the idempotents of $D_{\mathcal{E}_1}^*$ onto the idempotents of $D_{\mathcal{E}_2}^*$, i.e. in particular $\pi$ sends the $D$-sets (with respect to $\mathcal{E}_1$) in $S$ onto the $D$-sets (with respect to $\mathcal{E}_2$) in $Y$. 
\end{proof}

\begin{theorem}\label{Theorem 5.8}
Let $b\in\mathbb{N}$ and assume that $S$ and $W$ are two $b$-weakly right cancellative semigroups with given nets $\mathcal{E}$ and $\mathcal{F}$ respectively in $\mathcal{P}_f(S)$ and $\mathcal{P}_f(W)$. Let each of $S$ and $W$ have an identity element. Let $A$ be a subset of $S$ and $B$ be a subset of $W$. Then $A\times B$ is a $D$-set (with respect to $\mathcal{E}*\mathcal{F}$) in $S\times W$ if and only if both $A$ and $B$ are $D$-sets (with respect to $\mathcal{E}$ and $\mathcal{F}$ respectively) in $S$ and $W$ respectively. 
\end{theorem}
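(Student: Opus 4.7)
My plan is to prove the two directions separately: the forward direction by pushing the idempotent containing $A \times B$ down through the coordinate projections, and the reverse direction by pulling idempotents back through the inclusion $\iota: S \times W \to \beta S \times \beta W$ and then extracting a new idempotent via Ellis' theorem.

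For the forward direction, suppose $A \times B$ is a $D$-set, so there is an idempotent $r \in D_{\mathcal{E}*\mathcal{F}}^{*}$ with $A \times B \in r$. The projection $\pi_{1}: S \times W \to S$ is an onto homomorphism, and since $S \times W$ lies in the topological center of $\beta(S \times W)$, its continuous extension $\tilde{\pi_{1}}: \beta(S \times W) \to \beta S$ is a homomorphism (as in the proof of Lemma \ref{Lemma 5.5}). Set $p = \tilde{\pi_{1}}(r)$, which is then an idempotent of $\beta S$. For any $A' \in p$ one has $\pi_{1}^{-1}(A') = A' \times W \in r$, so $d_{\mathcal{E}*\mathcal{F}}^{*}(A' \times W) > 0$; since $d_{\mathcal{F}}^{*}(W) = 1$ (take $s = 1$), Lemma \ref{Lemma 5.1} gives $d_{\mathcal{E}}^{*}(A') = d_{\mathcal{E}*\mathcal{F}}^{*}(A' \times W) > 0$, so $p \in D_{\mathcal{E}}^{*}$. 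Since $A \times B \subseteq A \times W$ and $r$ is an ultrafilter, $A \times W \in r$ and hence $A \in p$. Thus $A$ is a $D$-set; symmetrically $B$ is a $D$-set.

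For the reverse direction, fix idempotents $p \in D_{\mathcal{E}}^{*}$ with $A \in p$ and $q \in D_{\mathcal{F}}^{*}$ with $B \in q$; then $(p,q)$ is an idempotent of the compact right topological semigroup $\beta S \times \beta W$ under coordinate-wise multiplication. By the same universal/denseness argument as for $\tilde{\pi_{1}}$, the continuous extension $\tilde{\iota}: \beta(S \times W) \to \beta S \times \beta W$ of the inclusion $\iota$ is a homomorphism, so $E := \tilde{\iota}^{-1}(\{(p,q)\})$ is a closed subsemigroup of $\beta(S \times W)$. A direct count shows that $S \times W$ is $b^{2}$-weakly right cancellative, so by Theorem \ref{Theorem 2.5} the set $D_{\mathcal{E}*\mathcal{F}}^{*}$ is a closed subsemigroup of $\beta(S \times W)$. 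Lemma \ref{Lemma 5.4} then gives $E \cap D_{\mathcal{E}*\mathcal{F}}^{*} \neq \emptyset$, and this intersection is a nonempty closed (hence compact) subsemigroup of the right topological semigroup $\beta(S \times W)$. Ellis' theorem therefore produces an idempotent $r \in E \cap D_{\mathcal{E}*\mathcal{F}}^{*}$. As $\tilde{\iota}(r) = (p,q)$, the relations $\tilde{\pi_{1}}(r) = p$ and $\tilde{\pi_{2}}(r) = q$ force $A \times W \in r$ and $S \times B \in r$, whence $A \times B = (A \times W) \cap (S \times B) \in r$, so $A \times B$ is a $D$-set with respect to $\mathcal{E} * \mathcal{F}$.

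The main technical point I expect to need care with is the assertion that $\tilde{\iota}$ is a semigroup homomorphism into $\beta S \times \beta W$; this follows from the fact that $\iota(S \times W)$ lies in the topological center of the product right topological semigroup $\beta S \times \beta W$ and that $S \times W$ is dense in $\beta(S \times W)$, exactly as in the justification cited for Lemma \ref{Lemma 5.5}. Once this is in place, the argument is a clean application of Lemmas \ref{Lemma 5.1}, \ref{Lemma 5.3}, \ref{Lemma 5.4} together with Ellis' theorem.
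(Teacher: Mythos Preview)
Your proof is correct and follows the same overall strategy as the paper: Lemma \ref{Lemma 5.4} together with the homomorphism $\tilde{\iota}$ and Ellis' theorem for the $(\Leftarrow)$ direction, and the projections together with Lemma \ref{Lemma 5.1} for the $(\Rightarrow)$ direction (the paper packages the latter via Proposition \ref{Proposition 5.7}). The only cosmetic difference is how $A\times B\in r$ is extracted once $\tilde{\iota}(r)=(p,q)$: the paper uses a continuity/neighborhood argument to find $C\in r$ with $C\subseteq A\times B$, whereas you obtain it from $\tilde{\pi_1}(r)=p$ and $\tilde{\pi_2}(r)=q$ via $A\times B=(A\times W)\cap(S\times B)$.
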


\begin{proof}
Let $A$ and $B$ be two given $D$-sets with respect to $\mathcal{E}$ and $\mathcal{F}$ respectively. Pick idempotents $p$ in $D_{\mathcal{E}}^{*}$ and $q$ in $D_{\mathcal{F}}^{*}$ such that $A$ is in $p$ and $B$ is in $q$. Consider the continuous extension $\tilde{\iota}:\beta (S\times W)\rightarrow \beta S \times \beta W$ of the inclusion map $\iota : S\times W \rightarrow \beta S \times \beta W$. By \cite[Corollary 4.22]{hs}, the map $\tilde{\iota}$ is a homomorphism. Hence by Lemma \ref{Lemma 5.4}, $\tilde{\iota}^{-1}(\{(p,q)\})\cap D_{\mathcal{E}*\mathcal{F}}^{*}$ is a compact subsemigroup of $\beta(S\times T)$. So by Ellis' theorem, pick an idempotent $r\in\tilde{\iota}^{-1}(\{(p,q)\})\cap D_{\mathcal{E}*\mathcal{F}}^{*}$. Since $\tilde{\iota}(r)=(p,q)$ and $\overline{A} \times \overline{B}$ is a neighborhood of $(p,q)$, there is some $C\in r$ such that $\tilde{\iota}(C) \subseteq \overline{A} \times \overline{B}$ and so $C \subseteq A \times B$. This implies $A \times B \in r$ and therefore, $A\times B$ is a $D$-set (with respect to $\mathcal{E}*\mathcal{F}$) in $S\times W$.\par
Conversely, let $A\times B$ be a $D$-set (with respect to $\mathcal{E}*\mathcal{F}$) in $S\times W$. Consider the projection homomorphism $\pi_{1}:S\times W\rightarrow S$ onto $S$. Then by Lemma \ref{Lemma 5.1}, $d_{\mathcal{E}*\mathcal{F}}^*\big(\pi_1^{-1}(E)\big)=d_{\mathcal{E}*\mathcal{F}}^*(E\times W)=d_{\mathcal{E}}^*(E)\cdot d_{\mathcal{F}}^*(W)=d_{\mathcal{E}}^*(E)$ for each subset $E$ of $S$. Therefore by Proposition \ref{Proposition 5.7}, $A=\pi_{1}(A\times B)$ is a $D$-set with respect to $\mathcal{E}$. Similarly, we can prove that $B$ is a $D$-set with respect to $\mathcal{F}$.
\end{proof}

\begin{remark}\label{Remark 5.9}
Inductively we can prove that for any finite collection of $b$-weakly right cancellative semigroups $\{S_{\imath}:\imath=1,2,...,n\}$, each of which contains an identity element, a subset $\bigtimes_{\imath=1}^{n}A_{\imath}$ \big(of $\bigtimes_{\imath=1}^{n}S_{\imath}$\big) is a $D$-set in $\bigtimes_{\imath=1}^{n}S_{\imath}$ if and only if each $A_{\imath}$ is a $D$-set in $S_{\imath}$. 
\end{remark}

Now we will consider an infinite product of $D$-sets. Let $\{S_\imath:\imath\in \mathcal{I}\}$ be a given collection of semigroups, each with an identity $1$, and for $\imath\in \mathcal{I}$, let $\mathcal{F}_\imath=\langle F_{\imath,j}\rangle_{j\in J_\imath}$ be a net in $\mathcal{P}_f(S_\imath)$. It would be natural to use the net $\big\langle\bigtimes_{\imath\in \mathcal{I}}F_{\imath,j(\imath)}\big\rangle_{\vec{j}\in\bigtimes_{\imath\in \mathcal{I}}J_\imath}$ to define density on $\bigtimes_{\imath\in \mathcal{I}}S_\imath$, but given $\vec{j}\in\bigtimes_{\imath\in \mathcal{I}}J_\imath,\,\bigtimes_{\imath\in \mathcal{I}}F_{\imath,j(\imath)}$ is not in $\mathcal{P}_f\big(\bigtimes_{\imath\in \mathcal{I}}S_\imath\big)$ unless $F_{\imath,j(\imath)}$ is a singleton for all but finitely many values of $\imath$. So we will construct a new product net as follows.\par
For $\imath\in \mathcal{I}$, let $J_\imath ^{'}=J_\imath \cup \{1\}$ where $1$ is a point not in $J_\imath$ and define $F_{\imath,1}=\{1\}$, specifying $1\leq j$ for all $j\in J_\imath$. Let $\mathfrak{J}=\big\{\vec{j}\in\bigtimes_{\imath\in \mathcal{I}}J_\imath ^{'}:\{\imath\in \mathcal{I}:j(\imath)\neq 1\} \text{ is finite}\big\}$. For $\vec{j},\vec{k}\in \mathfrak{J}$, agree that $\vec{j}\leq \vec{k}$ provided that for each $\imath\in \mathcal{I},\,j(\imath)\leq k(\imath)$. Then the product net determined by $\{\mathcal{F}_\imath:\imath\in \mathcal{I}\}$ is the net $\mathcal{F}=\big\langle\bigtimes_{\imath\in \mathcal{I}}F_{\imath,j(\imath)}\big\rangle_{\vec{j}\in \mathfrak{J}}$.\par
When we refer to the product net determined by $\{\mathcal{F}_\imath:\imath\in \mathcal{I}\}$, we will assume that we have $J_\imath ^{'}$ for $\imath\in \mathcal{I}$ and $\mathfrak{J}$ as in the above paragraph. Note that, when $\mathcal{I}$ is finite, then the above product net is essentially same with the usual product of nets. Therefore, the corresponding density is the usual density with respect to the product net. The following proposition differentiates an infinite product net from a finite one, as the conclusion holds only for infinite products.

\begin{proposition}
Let $\{S_\imath:\imath\in\mathcal{I}\}$ be an infinite collection of semigroups with given nets $\mathcal{F}_{\imath}=\langle F_{\imath,j}\rangle_{j\in J_{\imath}}$ in $\mathcal{P}_f(S_\imath)$ for each $\imath\in\mathcal{I}$ and assume that each $S_\imath$ has an identity element $1$. Let $\mathcal{F}$ be the product net determined by $\{\mathcal{F}_\imath:\imath\in\mathcal{I}\}$. Then $d_{\mathcal{F}}^*\Big(\big(\bigtimes_{\imath\in\mathcal{I}}S_\imath \big) \setminus \big(\bigtimes_{\imath\in\mathcal{I}}(S_\imath \setminus \{1\})\big)\Big)=1$.
\end{proposition}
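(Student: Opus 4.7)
The plan is to exploit the very definition of the product net $\mathcal{F}$ on $\mathfrak{J}$ together with the infinitude of $\mathcal{I}$: for every index vector $\vec{j}\in\mathfrak{J}$, the ``support'' set $\{\imath\in\mathcal{I}:j(\imath)\neq 1\}$ is finite by definition of $\mathfrak{J}$, whereas $\mathcal{I}$ is infinite. Hence there is always at least one (in fact, infinitely many) coordinate $\imath\in\mathcal{I}$ with $j(\imath)=1$, and for any such $\imath$ we have $F_{\imath,j(\imath)}=F_{\imath,1}=\{1\}$ by construction.

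Let $A=\bigl(\bigtimes_{\imath\in\mathcal{I}}S_\imath\bigr)\setminus\bigl(\bigtimes_{\imath\in\mathcal{I}}(S_\imath\setminus\{1\})\bigr)$, i.e.\ $A$ consists of all tuples having at least one coordinate equal to $1$. Fix $\vec{j}_0\in\mathfrak{J}$ arbitrarily. I would verify the density condition at the single choice $\vec{j}=\vec{j}_0\geq\vec{j}_0$, and at the ``no multiplication'' option $s=1$ in the definition of $d_{\mathcal{F}}^*$ (which, as the paper remarks after Definition 2.1, simply means leaving $F_{\vec j}$ unchanged). Then every $\vec{t}\in F_{\vec j}=\bigtimes_{\imath\in\mathcal{I}}F_{\imath,j_0(\imath)}$ has $t_\imath=1$ for each $\imath$ with $j_0(\imath)=1$. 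By the opening observation at least one such $\imath$ exists, so $\vec{t}\in A$. Thus $F_{\vec j}\subseteq A$, giving
\[
|A\cap(F_{\vec j}\cdot 1)|=|F_{\vec j}|\geq 1\cdot|F_{\vec j}|.
\]

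Since $\vec{j}_0\in\mathfrak{J}$ was arbitrary, the supremum defining $d_{\mathcal{F}}^*(A)$ is at least $1$, and since densities are bounded above by $1$ we conclude $d_{\mathcal{F}}^*(A)=1$. There is no serious obstacle here; the only thing to be careful with is to use the ``$s=1$'' option in Definition~2.1 (rather than try to multiply by an honest element of $\bigtimes_\imath S_\imath$, where a translate could conceivably destroy the coordinate-$1$ property if $s_\imath\neq 1$), and to remember that $\mathfrak{J}$ was precisely designed so that $F_{\imath,j(\imath)}=\{1\}$ for cofinitely many $\imath$, which is exactly what makes the argument go through only in the infinite-product setting.
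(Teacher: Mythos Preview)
Your proof is correct and follows essentially the same argument as the paper: given any $\vec{j}_0\in\mathfrak{J}$, take $\vec{j}=\vec{j}_0$, leave $F_{\vec{j}}$ untranslated, and observe that some coordinate $\imath_0$ has $j(\imath_0)=1$ so $F_{\imath_0,j(\imath_0)}=\{1\}$, forcing $F_{\vec{j}}\subseteq A$. The only cosmetic difference is that the paper multiplies by the genuine identity element $\vec{s}=\langle 1\rangle_{\imath\in\mathcal{I}}$ of the product semigroup rather than invoking the formal ``$s=1$'' option, but since each $S_\imath$ has an identity these amount to the same thing.
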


\begin{proof}
For $\vec{j}\in \mathfrak{J}$, let $F_{\vec{j}}=\bigtimes_{\imath\in\mathcal{I}}F_{\imath,j(\imath)}$. We will show that for each $\vec{k}\in \mathfrak{J}$ there exists $\vec{j}\in \mathfrak{J}$ and $\vec{s}\in\bigtimes_{\imath\in\mathcal{I}}S_\imath$ such that $\vec{j}\geq \vec{k}$ and 
$$\big|\big(\big({\bigtimes}_{\imath\in\mathcal{I}}S_\imath \big) \setminus \big({\bigtimes}_{\imath\in\mathcal{I}}(S_\imath \setminus \{1\})\big)\big)\cap F_{\vec{j}}\cdot \vec{s}\big|\geq |F_{\vec{j}}|.$$
So let $\vec{k}\in \mathfrak{J}$ be given and let $\vec{j}=\vec{k}$. Let $\vec{s}=\langle s_\imath \rangle_{\imath\in\mathcal{I}}$ be the vector such that each $s_\imath=1$. Note that $F_{\vec{j}}\cdot \vec{s}=F_{\vec{j}}$. Pick $\imath_0 \in \mathcal{I}$ such that $j(\imath_0)=1$. Now 

\begin{equation*}
\begin{split}
F_{\vec{j}} &=\big({\bigtimes}_{\imath\in\mathcal{I}}S_{\imath}\big)\cap F_{\vec{j}}\\
&=\big(\big(\big({\bigtimes}_{\imath\in\mathcal{I}}S_{\imath} \big) \setminus \big( {\bigtimes}_{\imath\in\mathcal{I}}(S_{\imath} \setminus \{1\})\big)\big)\cap F_{\vec{j}}\big)\cup \big(\big({\bigtimes}_{\imath\in\mathcal{I}}(S_{\imath} \setminus \{1\})\big)\cap F_{\vec{j}}\big).
\end{split}
\end{equation*}

Since $j(\imath_0)=1$, $F_{\imath_0,j(\imath_0)}=\{1\}$ and therefore $(S_{\imath_0} \setminus \{1\})\cap F_{\imath_0,j(\imath_0)}=\emptyset$ so that $\big({\bigtimes}_{\imath\in\mathcal{I}}(S_{\imath} \setminus \{1\})\big)\cap F_{\vec{j}}=\emptyset$. Therefore $\big|\big(\big({\bigtimes}_{\imath\in\mathcal{I}}S_{\imath} \big) \setminus \big( {\bigtimes}_{\imath\in\mathcal{I}}(S_{\imath} \setminus \{1\})\big)\big)\cap F_{\vec{j}}\big|=|F_{\vec{j}}|$ as required.

\end{proof}

The next lemma is a generalization of Lemma \ref{Lemma 5.1} for an infinite collection of semigroups. Note that, for an infinite collection $\{\alpha_\imath:\imath\in\mathcal{I}\}\subseteq [0,1]$, we define their product as $\prod_{\imath\in\mathcal{I}}\alpha_\imath:=\inf\{\prod_{\imath\in G}\alpha_\imath:G\in\mathcal{P}_f(\mathcal{I})\}$.\\

\begin{lemma}\label{Lemma 5.11}
Let $\{S_\imath:\imath\in\mathcal{I}\}$ be an infinite collection of semigroups with given nets $\mathcal{F}_{\imath}=\langle F_{\imath,j}\rangle_{j\in J_{\imath}}$ in $\mathcal{P}_f(S_\imath)$ for each $\imath\in\mathcal{I}$ and assume that each $S_\imath$ has an identity element $1$. Let $\mathcal{F}$ be the product net determined by $\{\mathcal{F}_\imath:\imath\in\mathcal{I}\}$. Let $A_{\imath}\subseteq S_{\imath}$ for each $\imath\in\mathcal{I}$. Then $d_{\mathcal{F}}^{*}\big(\bigtimes_{\imath\in\mathcal{I}}A_{\imath}\big)=\prod_{\imath\in\mathcal{I}}d_{\mathcal{F}_{\imath}}^{*}(A_{\imath})$. 
\end{lemma}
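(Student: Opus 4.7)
My plan is to reduce the infinite product to a pointwise cardinality identity and then match the two directions of the density inequality. Let me set notation: for $\vec{j}\in\mathfrak{J}$ write $F_{\vec{j}}=\bigtimes_{\imath\in\mathcal{I}}F_{\imath,j(\imath)}$, let $G(\vec{j})=\{\imath:j(\imath)\neq 1\}$ (a finite set), and let $\alpha_\imath=d_{\mathcal{F}_\imath}^*(A_\imath)$. For $\vec{s}=\langle s_\imath\rangle\in\bigtimes S_\imath$, the key identity to record first is
\[
\bigl|\bigl({\bigtimes}_{\imath}A_\imath\bigr)\cap F_{\vec{j}}\cdot\vec{s}\bigr|=\prod_{\imath\in G(\vec{j})}\bigl|A_\imath\cap F_{\imath,j(\imath)}\cdot s_\imath\bigr|\cdot\prod_{\imath\notin G(\vec{j})}\mathbf{1}_{A_\imath}(s_\imath),
\]
together with $|F_{\vec{j}}|=\prod_{\imath\in G(\vec{j})}|F_{\imath,j(\imath)}|$, since each factor with $j(\imath)=1$ equals $\{1\}$. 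Thus whenever the ratio $|\cdot\cap F_{\vec{j}}\vec{s}|/|F_{\vec{j}}|$ is nonzero, it equals $\prod_{\imath\in G(\vec{j})}|A_\imath\cap F_{\imath,j(\imath)}s_\imath|/|F_{\imath,j(\imath)}|$.

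For the upper bound, fix any finite $G\subseteq\mathcal{I}$ and any $\epsilon>0$. For each $\imath\in G$ use the definition of $\alpha_\imath$ to pick $j_\imath^{0}\in J_\imath$ such that $|A_\imath\cap F_{\imath,j}s|<(\alpha_\imath+\epsilon)|F_{\imath,j}|$ for every $j\geq j_\imath^{0}$ and every $s\in S_\imath\cup\{1\}$. Define $\vec{j}^{\,0}\in\mathfrak{J}$ by $j^{0}(\imath)=j_\imath^{0}$ for $\imath\in G$ and $j^{0}(\imath)=1$ otherwise. For any $\vec{j}\geq\vec{j}^{\,0}$ we have $G\subseteq G(\vec{j})$, and using the identity above together with the trivial bound $|A_\imath\cap F_{\imath,j(\imath)}s_\imath|/|F_{\imath,j(\imath)}|\leq 1$ for $\imath\in G(\vec{j})\setminus G$, the ratio is strictly less than $\prod_{\imath\in G}(\alpha_\imath+\epsilon)$. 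Hence $d_{\mathcal{F}}^*(\bigtimes A_\imath)\leq\prod_{\imath\in G}(\alpha_\imath+\epsilon)$; sending $\epsilon\to 0$ and then taking the infimum over finite $G$ yields $d_{\mathcal{F}}^*(\bigtimes A_\imath)\leq\prod_{\imath\in\mathcal{I}}\alpha_\imath$.

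For the lower bound, fix $\gamma<\prod_{\imath\in\mathcal{I}}\alpha_\imath$; in particular every $\alpha_\imath>0$, so each $A_\imath$ is nonempty, and we may fix a representative $a_\imath\in A_\imath$ for each $\imath$. Let $\vec{k}\in\mathfrak{J}$ be given; set $G^{0}=G(\vec{k})$. Choose $\epsilon>0$ small enough that $\gamma<\prod_{\imath\in G^{0}}(\alpha_\imath-\epsilon)$, which is possible since $\gamma<\prod_{\imath\in G^{0}}\alpha_\imath$ by the definition of the infinite product. For each $\imath\in G^{0}$, the definition of $\alpha_\imath$ as a supremum gives $j(\imath)\geq k(\imath)$ and $s_\imath\in S_\imath\cup\{1\}$ with $|A_\imath\cap F_{\imath,j(\imath)}\cdot s_\imath|\geq(\alpha_\imath-\epsilon)|F_{\imath,j(\imath)}|$; for $\imath\notin G^{0}$ set $j(\imath)=1$ and $s_\imath=a_\imath$, so $|A_\imath\cap F_{\imath,j(\imath)}s_\imath|=1=|F_{\imath,j(\imath)}|$. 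Then $\vec{j}\geq\vec{k}$ in $\mathfrak{J}$ and the identity of the first paragraph gives ratio at least $\prod_{\imath\in G^{0}}(\alpha_\imath-\epsilon)>\gamma$, so $d_{\mathcal{F}}^*(\bigtimes A_\imath)\geq\gamma$. Letting $\gamma\uparrow\prod_\imath\alpha_\imath$ closes the bound.

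The only mildly delicate point is the bookkeeping around indices with $j(\imath)=1$: the factor $\{1\}$ must be handled as a singleton whose contribution to the ratio is $0$ or $1$, and in the lower-bound step one must be sure that some element of $A_\imath$ exists to use as $s_\imath$; both are immediate once $\alpha_\imath>0$ is noted. Everything else is a straightforward combination of the supremum characterization of $d_{\mathcal{F}_\imath}^*$ and the definition $\prod_{\imath\in\mathcal{I}}\alpha_\imath=\inf_{G\in\mathcal{P}_f(\mathcal{I})}\prod_{\imath\in G}\alpha_\imath$.
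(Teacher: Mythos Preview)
Your proof is correct and follows essentially the same approach as the paper's: both arguments reduce to a finite set of coordinates, use the coordinate-wise density definition to pick threshold indices (upper bound) or good indices and shifts (lower bound), and handle the coordinates with $j(\imath)=1$ as singletons contributing a $0$/$1$ factor. The only cosmetic difference is that the paper argues both inequalities by contradiction and computes the product identity inline, whereas you state the cardinality identity once at the outset and prove the two inequalities directly; your handling of the edge case (observing that $\prod_\imath\alpha_\imath>0$ forces every $A_\imath\neq\emptyset$, so the trivial case need not be assumed away separately) is in fact slightly cleaner than the paper's ``we may presume each $A_\imath\neq\emptyset$.''
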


\begin{proof}
For each $\imath\in\mathcal{I}$, let $\delta_\imath=d_{\mathcal{F}_\imath}^*(A_\imath)$. We may presume that each $A_\imath\neq\emptyset$. Suppose first that $d_{\mathcal{F}}^*\big(\bigtimes_{\imath\in\mathcal{I}}A_\imath\big)>\prod_{\imath\in\mathcal{I}}\delta_\imath$ and pick $\gamma$ such that $d_{\mathcal{F}}^*\big(\bigtimes_{\imath\in\mathcal{I}}A_\imath\big) > \gamma > \prod_{\imath\in\mathcal{I}}\delta_\imath$. Pick $G\in \mathcal{P}_f(\mathcal{I})$ such that $\prod_{\imath\in G}\delta_\imath < \gamma$ and pick $\epsilon >0$ such that $\prod_{\imath\in G}(\delta_\imath +\epsilon) < \gamma$.\par
For $\imath\in G$, pick $t(\imath)\in J_\imath$ such that whenever $j\in J_\imath$ with $j\geq t(\imath)$ and $s\in S_\imath$, one has $|A_\imath \cap (F_{\imath,j}\cdot s)| < (\delta_\imath + \epsilon)\cdot |F_{\imath,j}|$. We can presume that each $t(\imath)\neq 1$. Define $\vec{j}\in \mathfrak{J}$ by $j(\imath)=t(\imath)$ if $\imath\in G$ and $j(\imath)=1$ if $\imath\in\mathcal{I}\setminus G$. Pick $\vec{k}\in \mathfrak{J}$ such that $\vec{k}\geq \vec{j}$ and $\vec{s}=\langle s_\imath \rangle_{\imath\in\mathcal{I}}\in \bigtimes_{\imath\in\mathcal{I}}S_\imath$ such that $\big|\big(\bigtimes_{\imath\in\mathcal{I}}A_\imath \big) \cap \big(\big(\bigtimes_{\imath\in\mathcal{I}}F_{\imath,k(\imath)}\big)\cdot \vec{s}\big)\big| > \gamma \cdot \big|\bigtimes_{\imath\in\mathcal{I}}F_{\imath,k(\imath)}\big|$.\par
Let $E=\{\imath\in\mathcal{I}:k(\imath)\neq 1\}$. Then $G\subseteq E$. (If $G=E$, delete the references to $E\setminus G$ in the computation that follows.) Note that if $\imath\in\mathcal{I}\setminus E$, then $F_{\imath,k(\imath)}\cdot s_\imath=\{s_\imath\}$ so $\big|A_\imath \cap \big(F_{\imath,k(\imath)}\cdot s_\imath\big)\big|\leq 1$. We have then 

\begin{equation*}
\begin{split}
\gamma \cdot \big|{\bigtimes}_{\imath\in\mathcal{I}}F_{\imath,k(\imath)}\big| &< \big|\big({\bigtimes}_{\imath\in\mathcal{I}}A_\imath \big) \cap \big(\big({\bigtimes}_{\imath\in\mathcal{I}}F_{\imath,k(\imath)}\big)\cdot \vec{s}\big)\big|\\
&\leq \big|{\bigtimes}_{\imath\in E}\big(A_\imath\cap \big(F_{\imath,k(\imath)}\cdot s_\imath\big)\big)\big|\\
&=\big|{\bigtimes}_{\imath\in E\setminus G}\big(A_\imath\cap\big(F_{\imath,k(\imath)}\cdot s_\imath\big)\big)\big|\cdot\big|{\bigtimes}_{\imath\in G}\big(A_\imath\cap \big(F_{\imath,k(\imath)}\cdot s_\imath \big)\big)\big|\\
&\leq {\prod}_{\imath\in E\setminus G}\big|F_{\imath,k(\imath)}\big|\cdot {\prod}_{\imath\in G}\big((\delta_\imath + \epsilon )\cdot \big|F_{\imath,k(\imath)}\big|\big)\\
&<\gamma\cdot{\prod}_{\imath\in\mathcal{I}}\big|F_{\imath,k(\imath)}\big|,
\end{split}
\end{equation*}
a contradiction.\par
Now suppose that $d_{\mathcal{F}}^*\big(\bigtimes_{\imath\in\mathcal{I}}A_\imath\big) < \prod_{\imath\in\mathcal{I}}\delta_\imath$ and pick $\gamma$ such that $d_{\mathcal{F}}^*\big(\bigtimes_{\imath\in\mathcal{I}}A_\imath\big) < \gamma < \prod_{\imath\in\mathcal{I}}\delta_\imath$. Pick $\vec{j}\in \mathfrak{J}$ such that whenever $\vec{k}\in \mathfrak{J}$ with $\vec{k}\geq \vec{j}$ and $\vec{s}\in\bigtimes_{\imath\in\mathcal{I}}S_\imath$, one has $\big|\big(\bigtimes_{\imath\in\mathcal{I}}A_\imath \big)\cap \big(\big(\bigtimes_{\imath\in\mathcal{I}}F_{\imath,k(\imath)}\big)\cdot \vec{s} \big)\big| < \gamma\cdot \big|\bigtimes_{\imath\in\mathcal{I}}F_{\imath,k(\imath)}\big|$. Let $G=\{\imath\in\mathcal{I}:j(\imath)\neq 1\}$. Pick $\epsilon > 0$ such that $\prod_{\imath\in G}(\delta_\imath - \epsilon) > \gamma$. For $\imath\in G$, pick $t(\imath)\in J_{\imath}$ such that $t(\imath)\geq j(\imath)$ and $s_\imath\in S_\imath$ such that $\big|A_\imath\cap\big(F_{\imath,t(\imath)}\cdot s_\imath\big)\big|>(\delta_\imath - \epsilon)\cdot \big|F_{\imath,t(\imath)}\big|$. Define $\vec{k}\in \mathfrak{J}$ by $k(\imath)=t(\imath)$ if $\imath\in G$ and $k(\imath)=1$ if $\imath\in\mathcal{I}\setminus G$. For $\imath\in\mathcal{I}\setminus G$, pick $s_\imath\in A_{\imath}$ and consider $\vec{s}=\langle s_\imath \rangle_{\imath\in\mathcal{I}}$. Then 

\begin{equation*}
\begin{split}
\gamma\cdot\big|{\bigtimes}_{\imath\in\mathcal{I}}F_{\imath,k(\imath)}\big| &> \big|\big({\bigtimes}_{\imath\in\mathcal{I}}A_\imath\big)\cap \big(\big({\bigtimes}_{\imath\in\mathcal{I}}F_{\imath,k(\imath)}\big)\cdot\vec{s}\big)\big|\\
&=\big|{\bigtimes}_{\imath\in\mathcal{I}}\big(A_\imath\cap\big(F_{\imath,k(\imath)}\cdot s_\imath \big)\big)\big|\\
&=\big|{\bigtimes}_{\imath\in G}\big(A_\imath\cap\big(F_{\imath,t(\imath)}\cdot s_\imath\big)\big)\big|\\
&> {\prod}_{\imath\in G}(\delta_\imath - \epsilon)\cdot \big|F_{\imath,t(\imath)}\big|\\
&> \gamma\cdot {\prod}_{\imath\in G}\big|F_{\imath,t(\imath)}\big|=\gamma\cdot \big|{\bigtimes}_{\imath\in\mathcal{I}}F_{\imath,k(\imath)}\big|,
\end{split}
\end{equation*}
a contradiction.
\end{proof}

\begin{lemma}\label{Lemma 5.12}
Let $\{S_\imath : \imath \in\mathcal{I}\}$ be a collection of semigroups with given nets $\mathcal{F}_\imath=\langle F_{\imath,j}\rangle_{j\in J_\imath}$ in $\mathcal{P}_f(S_\imath)$ for each $\imath\in\mathcal{I}$ and assume that each $S_\imath$ has an identity element. Let $\mathcal{I}=\mathcal{I}_1\sqcup\mathcal{I}_2$ be a partition of $\mathcal{I}$. Let $\mathcal{F}$, $\mathcal{G}$ and $\mathcal{H}$ be the product nets determined by $\{\mathcal{F}_\imath : \imath \in\mathcal{I}\}$, $\{\mathcal{F}_\imath : \imath \in\mathcal{I}_1\}$ and $\{\mathcal{F}_\imath : \imath \in\mathcal{I}_2\}$ in $\mathcal{P}_f\big(\bigtimes_{\imath\in\mathcal{I}}S_\imath\big)$, $\mathcal{P}_f\big(\bigtimes_{\imath\in\mathcal{I}_1}S_\imath\big)$ and $\mathcal{P}_f\big(\bigtimes_{\imath\in\mathcal{I}_2}S_\imath\big)$, respectively. Let $\sigma: \big(\bigtimes_{\imath\in\mathcal{I}_1}S_\imath\big)\times\big(\bigtimes_{\imath\in\mathcal{I}_2}S_\imath\big)\rightarrow \bigtimes_{\imath\in\mathcal{I}}S_\imath$ be the natural isomorphism defined by $\sigma \big(\langle s_\imath \rangle_{\imath\in\mathcal{I}_1},\langle s_\imath \rangle_{\imath\in\mathcal{I}_2}\big)=\langle s_\imath \rangle_{\imath\in\mathcal{I}}$ for $s_\imath\in S_\imath$, $\imath\in \mathcal{I}$. Let $A$ be a subset of $\big(\bigtimes_{\imath\in\mathcal{I}_1}S_\imath\big)\times\big(\bigtimes_{\imath\in\mathcal{I}_2}S_\imath\big)$. Then $d_{\mathcal{G}*\mathcal{H}}^*(A)=d_{\mathcal{F}}^*(\sigma[A])$.
\end{lemma}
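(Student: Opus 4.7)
The plan is to observe that the natural isomorphism $\sigma$ essentially identifies the two product nets, so that computing $d^*_{\mathcal{G}*\mathcal{H}}(A)$ and $d^*_{\mathcal{F}}(\sigma[A])$ reduces to the same supremum. This should be a bookkeeping argument rather than a substantive combinatorial one.

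First I would unfold the definitions. Write $\mathfrak{J}_1$, $\mathfrak{J}_2$, $\mathfrak{J}$ for the index sets of $\mathcal{G}$, $\mathcal{H}$, $\mathcal{F}$ respectively. Since $\mathcal{I} = \mathcal{I}_1 \sqcup \mathcal{I}_2$, there is a natural bijection $\varphi : \mathfrak{J}_1 \times \mathfrak{J}_2 \to \mathfrak{J}$ given by $\varphi(\vec{j}_1,\vec{j}_2)(\imath) = j_1(\imath)$ if $\imath \in \mathcal{I}_1$ and $\varphi(\vec{j}_1,\vec{j}_2)(\imath) = j_2(\imath)$ if $\imath \in \mathcal{I}_2$. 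The finite-support condition on $\mathfrak{J}$ matches exactly the union of the finite-support conditions on $\mathfrak{J}_1$ and $\mathfrak{J}_2$, and $\varphi$ is easily seen to be an order isomorphism with respect to the coordinatewise orderings used on both sides.

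Second I would check that $\sigma$ intertwines the two nets. Writing $F_{\vec{j}_1} = \bigtimes_{\imath \in \mathcal{I}_1} F_{\imath,j_1(\imath)}$, $F_{\vec{j}_2} = \bigtimes_{\imath \in \mathcal{I}_2} F_{\imath,j_2(\imath)}$, and $F_{\varphi(\vec{j}_1,\vec{j}_2)} = \bigtimes_{\imath \in \mathcal{I}} F_{\imath,\varphi(\vec{j}_1,\vec{j}_2)(\imath)}$, one has $\sigma(F_{\vec{j}_1} \times F_{\vec{j}_2}) = F_{\varphi(\vec{j}_1,\vec{j}_2)}$, and for $(\vec{s}_1,\vec{s}_2) \in (\bigtimes_{\mathcal{I}_1} S_\imath) \times (\bigtimes_{\mathcal{I}_2} S_\imath)$ the product structure gives $\sigma((F_{\vec{j}_1} \cdot \vec{s}_1) \times (F_{\vec{j}_2} \cdot \vec{s}_2)) = F_{\varphi(\vec{j}_1,\vec{j}_2)} \cdot \sigma(\vec{s}_1,\vec{s}_2)$. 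Because $\sigma$ is a bijection, cardinalities are preserved on both the sets and their $A$-intersections.

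Third I would handle the ``$s \in S \cup \{1\}$'' technicality: since each $S_\imath$ already has an identity, both $(\bigtimes_{\mathcal{I}_1} S_\imath) \times (\bigtimes_{\mathcal{I}_2} S_\imath)$ and $\bigtimes_{\mathcal{I}} S_\imath$ have identities, so the symbolic $1$ in the density definition may be taken to be the genuine identity and the shifts range effectively over the full product. Moreover $\sigma$ carries the identity of the former to the identity of the latter, so the set of admissible shifts matches.

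Putting these together, for any $\alpha$ the statement ``for every $(\vec{j}_0^1,\vec{j}_0^2) \in \mathfrak{J}_1 \times \mathfrak{J}_2$ there exist $(\vec{j}_1,\vec{j}_2) \geq (\vec{j}_0^1,\vec{j}_0^2)$ and $(\vec{s}_1,\vec{s}_2)$ with $|A \cap (F_{\vec{j}_1} \times F_{\vec{j}_2}) \cdot (\vec{s}_1,\vec{s}_2)| \geq \alpha \cdot |F_{\vec{j}_1} \times F_{\vec{j}_2}|$'' translates verbatim, via $\varphi$ and $\sigma$, into the corresponding statement defining $d^*_{\mathcal{F}}(\sigma[A]) \geq \alpha$. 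Taking the supremum over such $\alpha$ yields $d^*_{\mathcal{G}*\mathcal{H}}(A) = d^*_{\mathcal{F}}(\sigma[A])$. The only point requiring care, and really the only possible obstacle, is the verification that $\varphi$ is a genuine order isomorphism of directed sets with the correct cofinality, so that quantifiers of the form ``$\forall i_0 \exists i \geq i_0$'' on one side correspond to those on the other; once this is in place, everything else is formal.
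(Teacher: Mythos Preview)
Your proof is correct and rests on the same mechanism as the paper's: the bijection $\varphi:\mathfrak{J}_1\times\mathfrak{J}_2\to\mathfrak{J}$ together with $\sigma$ carries the data $(F_{\vec{j}_1}\times F_{\vec{j}_2},(\vec{s}_1,\vec{s}_2))$ bijectively onto the data $(F_{\vec{j}},\vec{s})$, preserving all the cardinalities appearing in the density definition. The only difference is packaging: the paper proves each inequality separately by contradiction (choosing a $\gamma$ strictly between the two quantities and translating a witness from one side to the other), whereas you observe once and for all that $\varphi$ is an order isomorphism and $\sigma$ intertwines the nets, so the two suprema coincide directly.
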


\begin{proof}
Let $\mathcal{F}=\big\langle \bigtimes_{\imath\in\mathcal{I}}F_{\imath,j(\imath)}\big\rangle_{\vec{j}\in\mathfrak{J}}$, $\mathcal{G}=\big\langle \bigtimes_{\imath\in\mathcal{I}_1}F_{\imath,j_1(\imath)}\big\rangle_{\vec{j_1}\in\mathfrak{J}_1}$ and $\mathcal{H}=\big\langle \bigtimes_{\imath\in\mathcal{I}_2}F_{\imath,j_2(\imath)}\big\rangle_{\vec{j_2}\in\mathfrak{J}_2}$. Suppose first that $d_{\mathcal{G}*\mathcal{H}}^*(A)<d_{\mathcal{F}}^*(\sigma[A])$ and pick $\gamma$ such that $d_{\mathcal{G}*\mathcal{H}}^*(A) < \gamma < d_{\mathcal{F}}^*(\sigma[A])$. Pick $\big(\vec{j_1},\vec{j_2}\big)\in \mathfrak{J}_1 \times \mathfrak{J}_2$ such that whenever $\big(\vec{k_1},\vec{k_2}\big)\geq \big(\vec{j_1},\vec{j_2}\big)$ and $\big(\vec{s_1},\vec{s_2}\big)\in \big(\bigtimes_{\imath\in\mathcal{I}_1}S_\imath\big)\times\big(\bigtimes_{\imath\in\mathcal{I}_2}S_\imath\big)$, one has 
\begin{equation*}
\begin{split}
&\big|A\cap\big(\big(F_{\vec{k_1}}\times F_{\vec{k_2}}\big)\cdot\big(\vec{s_1},\vec{s_2}\big)\big)\big|<\gamma\cdot \big|F_{\vec{k_1}}\times F_{\vec{k_2}}\big|,\\
\text{i.e. }&\big|\sigma[A]\cap\big(\sigma\big[F_{\vec{k_1}}\times F_{\vec{k_2}}\big]\cdot\sigma\big(\vec{s_1},\vec{s_2}\big)\big)\big|<\gamma\cdot \big|\sigma\big[F_{\vec{k_1}}\times F_{\vec{k_2}}\big]\big|,
\end{split}
\end{equation*}
where $F_{\vec{k_1}}:=\bigtimes_{\imath\in\mathcal{I}_1}F_{\imath,k_1(\imath)}$ and $F_{\vec{k_2}}:=\bigtimes_{\imath\in\mathcal{I}_2}F_{\imath,k_2(\imath)}$. Now consider the vector $\vec{j}\in\mathfrak{J}$ defined by $j(\imath):=\begin{cases}
j_1(\imath) & \text{if }\imath\in\mathcal{I}_1\\
j_2(\imath) & \text{if }\imath\in\mathcal{I}_2
\end{cases}$. Then pick $\vec{k}\in\mathfrak{J}$ such that $\vec{k}\geq\vec{j}$ and pick $\vec{s}\in\bigtimes_{\imath\in\mathcal{I}}S_\imath$ such that 
$$\big|\sigma[A]\cap\big(\big({\bigtimes}_{\imath\in\mathcal{I}}F_{\imath,k(\imath)}\big)\cdot\vec{s}\big)\big|>\gamma\cdot\big|{\bigtimes}_{\imath\in\mathcal{I}}F_{\imath,k(\imath)}\big|.$$
Pick $\vec{s_1}\in\bigtimes_{\imath\in\mathcal{I}_1}S_\imath$ and $\vec{s_2}\in\bigtimes_{\imath\in\mathcal{I}_2}S_\imath$ such that $\sigma(\vec{s_1},\vec{s_2})=\vec{s}$. Also define $\vec{k_1}\in\mathfrak{J}_1$ by $k_1(\imath)=k(\imath)$ for each $\imath\in\mathcal{I}_1$, and define $\vec{k_2}\in\mathfrak{J}_2$ by $k_2(\imath)=k(\imath)$ for each $\imath\in \mathcal{I}_2$. Then we have $\big(\vec{k_1},\vec{k_2}\big)\geq \big(\vec{j_1},\vec{j_2}\big)$ and $\sigma\big[F_{\vec{k_1}}\times F_{\vec{k_2}}\big]=\bigtimes_{\imath\in\mathcal{I}}F_{\imath,k(\imath)}$, and thus 
\begin{equation*}
\begin{split}
\gamma\cdot\big|{\bigtimes}_{\imath\in\mathcal{I}}F_{\imath,k(\imath)}\big|&<\big|\sigma[A]\cap\big(\big({\bigtimes}_{\imath\in\mathcal{I}}F_{\imath,k(\imath)}\big)\cdot\vec{s}\big|\\
&=\big|\sigma[A]\cap\big(\sigma\big[F_{\vec{k_1}}\times F_{\vec{k_2}}\big]\cdot\sigma(\vec{s_1},\vec{s_2})\big)\big|\\
&<\gamma\cdot\big|\sigma\big[F_{\vec{k_1}}\times F_{\vec{k_2}}\big]\big|=\gamma\cdot\big|{\bigtimes}_{\imath\in\mathcal{I}}F_{\imath,k(\imath)}\big|,
\end{split}
\end{equation*}
a contradiction.\par
Now suppose that $d_{\mathcal{G}*\mathcal{H}}^*(A)>d_{\mathcal{F}}^*(A)$ and pick $\gamma$ such that $d_{\mathcal{G}*\mathcal{H}}^*(A)>\gamma > d_{\mathcal{F}}^*(A)$. Pick $\vec{j}\in\mathfrak{J}$ such that whenever $\vec{k}\geq\vec{j}$ and $\vec{s}\in\bigtimes_{\imath\in\mathcal{I}}S_\imath$, one has 
$$\big|\sigma[A]\cap\big(\big({\bigtimes}_{\imath\in\mathcal{I}}F_{\imath,k(\imath)}\big)\cdot\vec{s}\big)\big|<\gamma\cdot\big|{\bigtimes}_{\imath\in\mathcal{I}}F_{\imath,k(\imath)}\big|.$$
Now consider the vector $\vec{j_1}\in\mathfrak{J}_1$ defined by $j_1(\imath)=j(\imath)$ for each $\imath\in\mathcal{I}_1$, and consider the vector 
$\vec{j_2}\in\mathfrak{J}_2$ defined by $j_2(\imath)=j(\imath)$ for each $\imath\in\mathcal{I}_2$. Then pick $(\vec{k_1},\vec{k_2})\in \mathfrak{J}_1\times\mathfrak{J}_2$ such that $(\vec{k_1},\vec{k_2})\geq (\vec{j_1},\vec{j_2})$ and pick $(\vec{s_1},\vec{s_2})\in\big(\bigtimes_{\imath\in\mathcal{I}_1}S_\imath\big)\times\big(\bigtimes_{\imath\in\mathcal{I}_2}S_\imath\big)$ such that 
\begin{equation*}
\begin{split}
&\big|A\cap\big(\big(F_{\vec{k_1}}\times F_{\vec{k_2}}\big)\cdot (\vec{s_1},\vec{s_2})\big)\big|>\gamma\cdot\big|F_{\vec{k_1}}\times F_{\vec{k_2}}\big|,\\
\text{i.e. }&\big|\sigma[A]\cap\big(\sigma\big[F_{\vec{k_1}}\times F_{\vec{k_2}}\big]\cdot \sigma(\vec{s_1},\vec{s_2})\big)\big|>\gamma\cdot\big|\sigma\big[F_{\vec{k_1}}\times F_{\vec{k_2}}\big]\big|,
\end{split}
\end{equation*}
where $F_{\vec{k_1}}:=\bigtimes_{\imath\in\mathcal{I}_1}F_{\imath,k_1(\imath)}$ and $F_{\vec{k_2}}:=\bigtimes_{\imath\in\mathcal{I}_2}F_{\imath,k_2(\imath)}$. Let $\vec{s}:=\sigma(\vec{s_1},\vec{s_2})$ and consider the vector $\vec{k}\in\mathcal{I}$ where  $k(\imath):=\begin{cases}
k_1(\imath) &\text{ if } \imath\in\mathcal{I}_1\\
k_2(\imath) &\text{ if } \imath\in\mathcal{I}_2
\end{cases}
$. Then we have $\vec{k}\geq \vec{j}$ and $\sigma\big[F_{\vec{k_1}}\times F_{\vec{k_2}}\big]=\bigtimes_{\imath\in\mathcal{I}}F_{\imath,k(\imath)}$, and thus 
\begin{equation*}
\begin{split}
\gamma\cdot\big|{\bigtimes}_{\imath\in\mathcal{I}}F_{\imath,k(\imath)}\big|&=\gamma\cdot\big|\sigma\big[F_{\vec{k_1}}\times F_{\vec{k_2}}\big]\big|\\
&<\big|\sigma[A]\cap\big(\sigma\big[F_{\vec{k_1}}\times F_{\vec{k_2}}\big]\cdot\sigma(\vec{s_1},\vec{s_2})\big)\big|\\
&=\big|\sigma[A]\cap\big(\big({\bigtimes}_{\imath\in\mathcal{I}}F_{\imath,k(\imath)}\big)\cdot\vec{s}\big)\big|<\gamma\cdot\big|{\bigtimes}_{\imath\in\mathcal{I}}F_{\imath,k(\imath)}\big|,
\end{split}
\end{equation*}
a contradiction.
\end{proof}

\begin{remark}
In the next few concluding results of this section, whenever we consider an infinite collection of semigroups, we assume each of them to be right cancellative, so that their Cartesian product also becomes right cancellative. Note that this may not be true if we restrict to the class of $b$-weakly right cancellative semigroups (for some $b\in\mathbb{N}$).
\end{remark}

In particular, Proposition \ref{Proposition 5.7} implies the following result.

\begin{corollary}\label{Corollary 5.14} 
Let $\{S_\imath:\imath\in\mathcal{I}\}$ be an infinite collection of right cancellative semigroups with given nets $\mathcal{F}_{\imath}$ in $\mathcal{P}_f(S_\imath)$ for each $\imath\in\mathcal{I}$ and assume that each $S_\imath$ has an identity element. Let $\mathcal{F}$ be the product net determined by $\{\mathcal{F}_\imath:\imath\in\mathcal{I}\}$. Let $\pi_{\imath_{0}}:\bigtimes_{\imath\in \mathcal{I}}S_{\imath}\rightarrow S_{\imath_{0}}$ be the projection map onto the $\imath_{0}^{th}$ component. Then $\pi_{\imath_{0}}$ maps $D$-sets (with respect to the product net $\mathcal{F}$) of $\bigtimes_{\imath\in\mathcal{I}}S_{\imath}$ onto the $D$-sets (with respect to $\mathcal{F}_{\imath_0}$) of $S_{\imath_{0}}$.
\end{corollary}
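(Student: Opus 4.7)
The plan is to derive this as a direct consequence of Proposition \ref{Proposition 5.7}, applied to the projection homomorphism $\pi_{\imath_0}:\bigtimes_{\imath\in\mathcal{I}}S_{\imath}\to S_{\imath_0}$. To invoke that proposition, I need to verify three things: $(a)$ the domain semigroup satisfies one of the cancellation hypotheses, $(b)$ the domain and codomain both have identities, and $(c)$ the density-preserving identity $d_{\mathcal{F}}^{*}(\pi_{\imath_0}^{-1}(B))=d_{\mathcal{F}_{\imath_0}}^{*}(B)$ holds for all $B\subseteq S_{\imath_0}$.

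Item $(b)$ is immediate since the coordinate-wise tuple of identities serves as an identity for $\bigtimes_{\imath\in\mathcal{I}}S_\imath$, and each $S_\imath$ has one by assumption. For $(a)$, since each $S_\imath$ is right cancellative, the product $\bigtimes_{\imath\in\mathcal{I}}S_\imath$ is right cancellative (checked componentwise), hence $1$-weakly right cancellative, so condition $\mathit{(1)}$ of Proposition \ref{Proposition 5.7} holds with $b=1$. The map $\pi_{\imath_0}$ is clearly an onto homomorphism.

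For the crucial item $(c)$, I write $\pi_{\imath_0}^{-1}(B)=\bigtimes_{\imath\in\mathcal{I}}A_{\imath}$ where $A_{\imath_0}=B$ and $A_{\imath}=S_{\imath}$ for $\imath\neq\imath_0$. By Lemma \ref{Lemma 5.11},
\[
d_{\mathcal{F}}^{*}\bigl(\pi_{\imath_0}^{-1}(B)\bigr)={\prod}_{\imath\in\mathcal{I}}d_{\mathcal{F}_{\imath}}^{*}(A_{\imath})=d_{\mathcal{F}_{\imath_0}}^{*}(B)\cdot{\prod}_{\imath\neq\imath_0}d_{\mathcal{F}_{\imath}}^{*}(S_{\imath}).
\]
Each factor $d_{\mathcal{F}_{\imath}}^{*}(S_{\imath})$ equals $1$ because $|S_{\imath}\cap F_{\imath,j}\cdot 1|=|F_{\imath,j}|$ for every $j\in J_\imath$; under the infimum convention for infinite products this yields ${\prod}_{\imath\neq\imath_0}d_{\mathcal{F}_{\imath}}^{*}(S_{\imath})=1$, and hence $d_{\mathcal{F}}^{*}(\pi_{\imath_0}^{-1}(B))=d_{\mathcal{F}_{\imath_0}}^{*}(B)$.

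With $(a)$, $(b)$, $(c)$ verified, Proposition \ref{Proposition 5.7} applies and yields that $\pi_{\imath_0}$ sends $D$-sets (with respect to $\mathcal{F}$) onto $D$-sets (with respect to $\mathcal{F}_{\imath_0}$). The only delicate point in the argument is the density computation, and the main thing to watch is that the infinite-product convention $\prod_{\imath\in\mathcal{I}}\alpha_{\imath}=\inf\{\prod_{\imath\in G}\alpha_{\imath}:G\in\mathcal{P}_f(\mathcal{I})\}$ behaves correctly when all but one factor equals $1$; this is routine since each finite subproduct is exactly $d_{\mathcal{F}_{\imath_0}}^{*}(B)$, so its infimum is $d_{\mathcal{F}_{\imath_0}}^{*}(B)$ as well.
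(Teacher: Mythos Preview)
Your proof is correct and follows essentially the same route as the paper: verify the density identity $d_{\mathcal{F}}^{*}(\pi_{\imath_0}^{-1}(B))=d_{\mathcal{F}_{\imath_0}}^{*}(B)$ via Lemma~\ref{Lemma 5.11}, then invoke Proposition~\ref{Proposition 5.7}. You are more explicit than the paper in checking the cancellation and identity hypotheses, which is fine. One tiny imprecision: in your closing remark, not \emph{every} finite subproduct equals $d_{\mathcal{F}_{\imath_0}}^{*}(B)$ (those $G$ omitting $\imath_0$ give $1$), but the infimum is still $d_{\mathcal{F}_{\imath_0}}^{*}(B)$, so the conclusion stands.
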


\begin{proof}
By Lemma \ref{Lemma 5.11}, $d_{\mathcal{F}}^{*}(\pi^{-1}(B))=d_{\mathcal{F}_{i_{0}}}^{*}(B)$ for each $B\subseteq S_{\imath_0}$. Hence the conclusion follows from Proposition \ref{Proposition 5.7}.
\end{proof}

We shall utilize the following notion of thick set \cite[Definition 1.2(a)]{hs-2} and its algebraic characterization \cite[Lemma 1.4(d)]{hs-2} to get a partial converse of Corollary \ref{Corollary 5.14}.  

\begin{definition}
Let $S$ be a semigroup. A subset $A$ of $S$ is thick if for each $F\in \mathcal{P}_f(S)$ there exists $s\in S$ such that $Fs\subseteq A$.
\end{definition}

\begin{lemma}\label{Lemma 5.16}
Let $S$ be a semigroup. Then a subset $A$ of $S$ is thick if and only if there is a closed left ideal $L$ of $\beta S$ such that $L\subseteq \overline{A}$.
\end{lemma}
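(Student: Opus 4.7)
The plan is to prove both implications by exploiting two standard identifications in $\beta S$: first, $p\in\overline{A}$ iff $A\in p$; second, for $t\in S$ (viewed as the principal ultrafilter) and $p\in\beta S$, the product formula gives $A\in tp$ iff $t^{-1}A\in p$, where $t^{-1}A=\{s\in S:ts\in A\}$. With these in hand, thickness of $A$ translates precisely into the statement that every finite subfamily of $\{t^{-1}A:t\in S\}$ has nonempty intersection.

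For the easy direction ($\Leftarrow$), I would suppose $L\subseteq\overline{A}$ is a closed left ideal and pick any $p\in L$. For each $t\in S$, since $L$ is a left ideal, $tp\in L\subseteq\overline{A}$, hence $A\in tp$, hence $t^{-1}A\in p$. Given $F\in\mathcal{P}_f(S)$, the intersection $\bigcap_{t\in F}t^{-1}A$ lies in $p$ and is therefore nonempty; any $s$ in it satisfies $Fs\subseteq A$, so $A$ is thick.

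For the nontrivial direction ($\Rightarrow$), I would observe first that thickness of $A$ is exactly the statement that for each $F\in\mathcal{P}_f(S)$ there exists $s$ with $s\in\bigcap_{t\in F}t^{-1}A$; so $\{t^{-1}A:t\in S\}$ has the finite intersection property and extends to an ultrafilter $p\in\beta S$. I then take $L=(\beta S)\,p=\{qp:q\in\beta S\}$. Since the right-multiplication map $q\mapsto qp$ is continuous (because $\beta S$ is a right topological semigroup), $L$ is the continuous image of the compact space $\beta S$, so it is compact and hence closed; associativity in $\beta S$ immediately shows $L$ is a left ideal. To verify $L\subseteq\overline{A}$, I would unpack the product formula: for any $q\in\beta S$, $A\in qp$ iff $\{s\in S:s^{-1}A\in p\}\in q$, and by the choice of $p$ this defining set is all of $S$, hence lies in $q$.

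There is no serious obstacle here, as both steps rely only on standard facts about the semigroup structure of $\beta S$; the only point requiring a moment of care is noticing that the combinatorial definition of thickness rephrases cleanly as a finite intersection property for the family $\{t^{-1}A:t\in S\}$, after which the construction of the ideal $L$ as $(\beta S)\,p$ is essentially forced by the requirement that every member of $L$ contain $A$.
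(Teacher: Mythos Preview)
Your proof is correct. The paper does not supply its own argument for this lemma; it merely records the statement and cites \cite[Lemma 1.4(d)]{hs-2} as its source. Your argument is the standard one: the reformulation of thickness as the finite intersection property for $\{t^{-1}A:t\in S\}$, the extension to an ultrafilter $p$, and the verification that the principal closed left ideal $(\beta S)p$ lies in $\overline{A}$ via the product formula are exactly the expected steps, and each is carried out correctly.
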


\begin{lemma}\label{Lemma 5.17}
Let $b\in\mathbb{N}$ and assume that $S$ is a $b$-weakly right cancellative semigroup. Let $A$ be a thick subset of $S$. Then $A$ is a $D$-set with respect to any net $\mathcal{F}$ in $\mathcal{P}_f(S)$.
\end{lemma}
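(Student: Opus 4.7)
The plan is to combine Lemma \ref{Lemma 5.16} with the fact that $b$-weak right cancellativity makes $D_{\mathcal{F}}^{*}$ a right ideal, and then produce an idempotent in the intersection of a left ideal inside $\overline{A}$ with $D_{\mathcal{F}}^{*}$.

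First, I invoke the hypothesis that $S$ is $b$-weakly right cancellative to conclude, as noted in the proof of Theorem \ref{Theorem 2.5}, that $D_{\mathcal{F}}^{*}$ is a closed right ideal of $\beta S$. Next, since $A$ is thick, Lemma \ref{Lemma 5.16} furnishes a closed left ideal $L$ of $\beta S$ with $L \subseteq \overline{A}$.

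Now I consider the intersection $L \cap D_{\mathcal{F}}^{*}$. It is nonempty: picking any $p \in L$ and any $q \in D_{\mathcal{F}}^{*}$, one has $qp \in L$ (because $L$ is a left ideal) and $qp \in D_{\mathcal{F}}^{*}$ (because $D_{\mathcal{F}}^{*}$ is a right ideal), so $qp \in L \cap D_{\mathcal{F}}^{*}$. The same reasoning shows $L \cap D_{\mathcal{F}}^{*}$ is closed under the semigroup operation: if $r,s \in L \cap D_{\mathcal{F}}^{*}$, then $rs$ lies in $L$ (left ideal) and in $D_{\mathcal{F}}^{*}$ (right ideal). Being the intersection of two closed sets, $L \cap D_{\mathcal{F}}^{*}$ is also closed, hence a compact right topological subsemigroup of $\beta S$.

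By Ellis' theorem \cite[Theorem 2.5]{hs}, $L \cap D_{\mathcal{F}}^{*}$ contains an idempotent $e$. Since $e \in L \subseteq \overline{A}$, we have $A \in e$, and since $e$ is an idempotent of $D_{\mathcal{F}}^{*}$, Definition \ref{Definition 2.2} yields that $A$ is a $D$-set with respect to $\mathcal{F}$. There is no real obstacle here; the only thing to be careful about is to verify that both ideal properties are used in showing $L \cap D_{\mathcal{F}}^{*}$ is nonempty and closed under multiplication, which is the standard trick for intersecting a left ideal with a right ideal in a semigroup.
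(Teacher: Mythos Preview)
Your proof is correct and follows essentially the same approach as the paper: use thickness to obtain a closed left ideal $L\subseteq\overline{A}$, use $b$-weak right cancellativity to make $D_{\mathcal{F}}^{*}$ a closed right ideal, and produce an idempotent in their intersection. The only cosmetic difference is that the paper passes to a minimal left ideal inside $L$ and a minimal right ideal inside $D_{\mathcal{F}}^{*}$ so that their intersection is a group, whereas you apply Ellis' theorem directly to the compact subsemigroup $L\cap D_{\mathcal{F}}^{*}$; both routes yield the required idempotent.
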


\begin{proof}
Pick a closed left ideal $L$ of $\beta S$ such that $L\subseteq \overline{A}$. Pick a net $\mathcal{F}$ in $\mathcal{P}_f(S)$ and consider the closed right ideal $D_{\mathcal{F}}^*$. Since every left ideal contains a minimal left ideal, we may assume that $L$ is a minimal left ideal. Pick a minimal right ideal $R\subseteq D_{\mathcal{F}}^*$. Then $L\cap R$ is a group, so pick an idempotent $p\in L\cap R$. Then $p\in \overline{A}\cap D_{\mathcal{F}}^*$ and therefore $A$ is a $D$-set with respect to $\mathcal{F}$. 
\end{proof}

The following theorem is a partial converse of Corollary \ref{Corollary 5.14}.

\begin{theorem}\label{Theorem 5.18}
Let $\{S_\imath:\imath\in\mathcal{I}\}$ be an infinite collection of right cancellative semigroups with given nets $\mathcal{F}_{\imath}=\langle F_{\imath,j}\rangle_{j\in J_{\imath}}$ in $\mathcal{P}_f(S_\imath)$ for each $\imath\in\mathcal{I}$ and assume that each $S_\imath$ has an identity element. Let $\mathcal{F}$ be the product net determined by $\{\mathcal{F}_\imath:\imath\in\mathcal{I}\}$. If $A_\imath$ is a $D$-set (with respect to $\mathcal{F}_{\imath}$) in $S_{\imath}$ for each $\imath\in\mathcal{I}$ and the $A_{\imath}$'s are thick for all but finitely many $\imath$'s, then $\bigtimes_{\imath\in\mathcal{I}}A_{\imath}$ is a $D$-set (with respect to the product net $\mathcal{F}$) in $\bigtimes_{\imath\in\mathcal{I}}S_{\imath}$.
\end{theorem}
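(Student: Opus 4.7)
The plan is to partition the index set into the finite subset where $A_\imath$ is only assumed to be a $D$-set and the complementary subset where $A_\imath$ is thick, handle each piece by the appropriate existing result, and then glue the two pieces together using Theorem \ref{Theorem 5.8} followed by Lemma \ref{Lemma 5.12} and Proposition \ref{Proposition 5.7}.

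First I would pick a finite $G\in\mathcal{P}_f(\mathcal{I})$ such that $A_\imath$ is thick for every $\imath\in\mathcal{I}_2:=\mathcal{I}\setminus G$ (such a $G$ exists by hypothesis; in the extreme case $G=\emptyset$ the argument below still works). Let $\mathcal{G}$ be the product net on $\bigtimes_{\imath\in G}S_\imath$ determined by $\{\mathcal{F}_\imath:\imath\in G\}$ and let $\mathcal{H}$ be the product net on $\bigtimes_{\imath\in\mathcal{I}_2}S_\imath$ determined by $\{\mathcal{F}_\imath:\imath\in\mathcal{I}_2\}$. Each factor is right cancellative (hence $1$-weakly right cancellative) with identity, so the same holds for the two resulting product semigroups.

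For the finite block I would invoke Remark \ref{Remark 5.9} directly: since each $A_\imath$ with $\imath\in G$ is a $D$-set with respect to $\mathcal{F}_\imath$, the finite Cartesian product $\bigtimes_{\imath\in G}A_\imath$ is a $D$-set in $\bigtimes_{\imath\in G}S_\imath$ with respect to $\mathcal{G}$. For the infinite block I would verify that $\bigtimes_{\imath\in\mathcal{I}_2}A_\imath$ is thick: given any $\vec{F}\in\mathcal{P}_f\big(\bigtimes_{\imath\in\mathcal{I}_2}S_\imath\big)$ and any $\imath\in\mathcal{I}_2$, thickness of $A_\imath$ yields $s_\imath\in S_\imath$ with $\pi_\imath(\vec{F})\cdot s_\imath\subseteq A_\imath$; setting $\vec{s}=\langle s_\imath\rangle_{\imath\in\mathcal{I}_2}$ gives $\vec{F}\cdot\vec{s}\subseteq\bigtimes_{\imath\in\mathcal{I}_2}A_\imath$. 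Since $\bigtimes_{\imath\in\mathcal{I}_2}S_\imath$ is right cancellative, Lemma \ref{Lemma 5.17} then shows that $\bigtimes_{\imath\in\mathcal{I}_2}A_\imath$ is a $D$-set with respect to $\mathcal{H}$.

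Next I would apply Theorem \ref{Theorem 5.8} to the two (both $1$-weakly right cancellative, with identity) semigroups $\bigtimes_{\imath\in G}S_\imath$ and $\bigtimes_{\imath\in\mathcal{I}_2}S_\imath$ equipped with the nets $\mathcal{G}$ and $\mathcal{H}$, concluding that $\big(\bigtimes_{\imath\in G}A_\imath\big)\times\big(\bigtimes_{\imath\in\mathcal{I}_2}A_\imath\big)$ is a $D$-set with respect to $\mathcal{G}\ast\mathcal{H}$. Finally, the natural isomorphism $\sigma$ of Lemma \ref{Lemma 5.12} satisfies $d^*_{\mathcal{G}\ast\mathcal{H}}(B)=d^*_{\mathcal{F}}(\sigma[B])$ for every $B$, so Proposition \ref{Proposition 5.7} applied to $\sigma$ (which is an onto homomorphism, and the domain is $1$-weakly right cancellative) says that $\sigma$ maps $D$-sets onto $D$-sets. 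Since $\sigma\big[\big(\bigtimes_{\imath\in G}A_\imath\big)\times\big(\bigtimes_{\imath\in\mathcal{I}_2}A_\imath\big)\big]=\bigtimes_{\imath\in\mathcal{I}}A_\imath$, the conclusion follows.

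The main obstacle is conceptual rather than technical: one must recognize that thickness is the right property to transport infinitely many factors through (because coordinate-wise choice of shifts assembles into a single shift in the product), whereas positive upper Banach density does not behave so well under infinite products. Once this is seen, the routine bookkeeping consists of checking that all the ambient hypotheses of Remark \ref{Remark 5.9}, Theorem \ref{Theorem 5.8}, Lemma \ref{Lemma 5.17}, Lemma \ref{Lemma 5.12} and Proposition \ref{Proposition 5.7} survive under the products, which is immediate from right cancellativity and the existence of identities.
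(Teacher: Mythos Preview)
Your proof is correct and follows essentially the same approach as the paper: partition $\mathcal{I}$ into a finite block handled by Remark \ref{Remark 5.9} and a thick block handled by Lemma \ref{Lemma 5.17}, combine via Theorem \ref{Theorem 5.8}, and transfer through the natural isomorphism using Lemma \ref{Lemma 5.12}. If anything, you are slightly more explicit than the paper in two places: you actually verify that a product of thick sets is thick (the paper merely asserts it), and you invoke Proposition \ref{Proposition 5.7} to justify that the isomorphism $\sigma$ carries $D$-sets to $D$-sets, whereas the paper simply cites Lemma \ref{Lemma 5.12} for this last step.
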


\begin{proof}
Let $\mathcal{I}_1=\{\imath\in\mathcal{I}:A_\imath\subseteq S_\imath \text{ is thick}\}$. Let $\mathcal{I}_2:=\mathcal{I}\setminus\mathcal{I}_1$. Now $\bigtimes_{\imath\in\mathcal{I}_1}A_\imath$ is a thick subset of the semigroup $\bigtimes_{\imath\in\mathcal{I}_1}S_\imath$. So by Lemma \ref{Lemma 5.17}, $\bigtimes_{\imath\in\mathcal{I}_1}A_\imath$ is a $D$-set in the semigroup $\bigtimes_{\imath\in\mathcal{I}_1}S_\imath$ with respect to the product net $\mathcal{G}$ determined by $\{\mathcal{F}_\imath:\imath\in\mathcal{I}_1\}$. On the other hand, as $\mathcal{I}_2$ is a finite set, so by Remark \ref{Remark 5.9} $\bigtimes_{\imath\in\mathcal{I}_2}A_\imath$ is a $D$-set in the semigroup $\bigtimes_{\imath\in\mathcal{I}_2}S_\imath$ with respect to the product net $\mathcal{H}$ determined by $\{\mathcal{F}_\imath:\imath\in\mathcal{I}_2\}$. Hence by Theorem \ref{Theorem 5.8}, $\big(\bigtimes_{\imath\in\mathcal{I}_1}A_\imath\big)\times\big(\bigtimes_{\imath\in\mathcal{I}_2}A_2\big)$ is a $D$-set in $\big(\bigtimes_{\imath\in\mathcal{I}_1}S_\imath\big)\times\big(\bigtimes_{\imath\in\mathcal{I}_2}S_2\big)$ with respect to the product net $\mathcal{G}*\mathcal{H}$. Therefore by Lemma \ref{Lemma 5.12}, $\bigtimes_{\imath\in\mathcal{I}}A_\imath$ is a $D$-set in the semigroup $\bigtimes_{\imath\in\mathcal{I}}S_\imath$ with respect to the product net $\mathcal{F}$.
\end{proof}

\begin{remark}
In general, the infinite product of $D$-sets may not be a $D$-set. For example, by \cite[Lemma 5.19.1]{hs} $2\mathbb{Z}$ is a $D$-set in the additive group $(\mathbb{Z},+)$, and the upper Banach density of $2\mathbb{Z}$ is  $d_{\mathcal{F}}^*(2\mathbb{Z})=\frac{1}{2}$ (with respect to the F{\o}lner sequence $\mathcal{E}=\langle E_n \rangle_{n\in \mathbb{N}}$, where for each $n\in\mathbb{N}$, $E_n=\{1,2,...,n\}$). But by Lemma \ref{Lemma 5.11}, with respect to the product net $\mathcal{F}$ on the countable product $\mathbb{Z}\times\mathbb{Z}\times ...,\,d_{\mathcal{F}}^*(2\mathbb{Z}\times 2\mathbb{Z} \times ...)=0$. Hence $2\mathbb{Z}\times 2\mathbb{Z}\times ...$ is not a $D$-set.
\end{remark}

\section{Relations with Central sets and $C$-sets}\label{Section 6}

As mentioned in the Introduction, for the additive semigroup $\mathbb{N}$ we have the following inclusions.
$$\text{ Central Sets }\subset D\text{-sets} \subset C\text{-sets}.$$
In this section, we study these inclusions in general with some natural restrictions on the semigroup $S$, and the corresponding net $\mathcal{F}$ in $\mathcal{P}_f(S)$. We shall begin by recalling the definition of central sets in an arbitrary semigroup \cite[Definition 4.42]{hs}.

\begin{definition}
Let $S$ be a semigroup and let $A\subseteq S$. Then $A$ is central in $S$ if there is some idempotent $p\in K(\beta{S})$ such that $A\in p$. 
\end{definition}

The following proposition is concerning the implication of central sets to $D$-sets.

\begin{proposition}\label{Proposition 6.2}
Let $b\in\mathbb{N}$ and assume that $S$ is both a $b$-weakly left and a $b$-weakly right cancellative semigroup. Let $\mathcal{F}$ be a net in $\mathcal{P}_f(S)$ which satisfies the $(*)$ condition. Then every central set in $S$ is a $D$-set in $S$ with respect to $\mathcal{F}$.
\end{proposition}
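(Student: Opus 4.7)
The plan is to show that under the hypotheses, $D_{\mathcal{F}}^{*}$ is a two-sided ideal of $\beta S$, which forces $K(\beta S)\subseteq D_{\mathcal{F}}^{*}$, and then central sets are immediately $D$-sets.

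First I would invoke the two results from \cite{hs-2} already cited in the excerpt: the $b$-weak right cancellativity of $S$ guarantees (via \cite[Theorem 2.4]{hs-2}) that $D_{\mathcal{F}}^{*}$ is a right ideal of $\beta S$, while the $b$-weak left cancellativity together with the $(*)$ condition on $\mathcal{F}$ guarantees (via \cite[Theorem 2.7]{hs-2}) that $D_{\mathcal{F}}^{*}$ is a left ideal of $\beta S$. Combining these, $D_{\mathcal{F}}^{*}$ is a (nonempty) two-sided ideal of $\beta S$.

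Since $K(\beta S)$ is by definition the smallest two-sided ideal of $\beta S$, we obtain $K(\beta S)\subseteq D_{\mathcal{F}}^{*}$. In particular, every idempotent of $K(\beta S)$ lies in $D_{\mathcal{F}}^{*}$. Now if $A$ is central in $S$, pick an idempotent $p\in K(\beta S)$ with $A\in p$; then $p$ is an idempotent of $D_{\mathcal{F}}^{*}$ containing $A$, so by Definition \ref{Definition 2.2}, $A$ is a $D$-set with respect to $\mathcal{F}$.

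There is essentially no obstacle here: the two cancellation hypotheses are tailored precisely so that the cited ideal results combine to make $D_{\mathcal{F}}^{*}$ two-sided. The only thing to be careful about is citing the correct results from \cite{hs-2} in the right direction (left vs.\ right) and noting that $D_{\mathcal{F}}^{*}\neq\emptyset$, which has already been recorded just after Definition \ref{Definition 2.2}; together these ensure $K(\beta S)\subseteq D_{\mathcal{F}}^{*}$ is a legitimate application of the minimality of $K(\beta S)$.
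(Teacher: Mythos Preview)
Your proposal is correct and follows essentially the same approach as the paper: show that $D_{\mathcal{F}}^{*}$ is a two-sided ideal of $\beta S$ (using the cited results from \cite{hs-2}), deduce $K(\beta S)\subseteq D_{\mathcal{F}}^{*}$ by minimality, and conclude that every minimal idempotent lies in $D_{\mathcal{F}}^{*}$. Your write-up is in fact slightly more explicit than the paper's about the citations and the final step of unwinding the definition of central set.
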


\begin{proof}
If $S$ is both $b$-weakly left and $b$-weakly right cancellative for some $b\in\mathbb{N}$, and if $\mathcal{F}$ satisfies the $(*)$ condition, then $D_{\mathcal{F}}^{*}$ is a two sided ideal. Hence $D_{\mathcal{F}}^{*}$ contains $\mathcal{K}(\beta S)$. Therefore, any idempotent of $\mathcal{K}(\beta S)$ is also an idempotent of $D_{\mathcal{F}}^{*}$. 
\end{proof}

To study the inclusion of $D$-sets inside $C$-sets, let us begin by recalling the definition of $C$-set in an arbitrary semigroup \cite[Definition 3.1]{j-1}.

\begin{definition}
Let $S$ be a semigroup.
\begin{enumerate}
\item For each positive integer $m$ put $\mathcal{J}_m=\{(t_1,t_2,...,t_m)\in\mathbb{N}^m:\,t_1<t_2<...<t_m\}$.
\item Put $\mathcal{T}={}^{\mathbb{N}}S$, and if the semigroup is clear from context, we will instead write $\mathcal{T}$ for $\mathcal{T}(S)$.
\item For all positive integers $m$, every $a\in S^{m+1}$, every $t\in \mathcal{J}_m$, and for all $f\in \mathcal{T}$, put $x(m,a,t,f)=\Big(\displaystyle{\prod_{i=1}^m(a(i)f(t_i))}\Big)a(m+1)$.
\item We call $A\subseteq S$ a $C$-set if there exists $m:\mathcal{P}_f(\mathcal{T})\rightarrow \mathbb{N},\,\alpha \in \bigtimes_{F\in\mathcal{P}_f(\mathcal{T})}S^{m(F)+1}$, and $\tau\in \bigtimes_{F\in\mathcal{P}_f(\mathcal{T})}\mathcal{J}_{m(F)}$ such that the following two conditions are satisfied:
\begin{enumerate}
\item If $F$ and $G$ are both elements of $\mathcal{P}_f(\mathcal{T})$ with $F\subsetneq G$, then $\tau (F)\big(m(F)\big)<\tau (G)(1)$, and 
\item Whenever $m$ is a positive integer, $G_1,G_2,...,G_m\in\mathcal{P}_f(\mathcal{T})$ with $G_1\subsetneq G_2 \subsetneq ...\subsetneq G_m$, and for each $i\in\{1,2,...,m\}$ we have $f_i\in G_i$, then $\displaystyle{\prod_{i=1}^m x\big(m(G_i),\alpha (G_i),\tau (G_i),f_i\big)\in A}$.
\end{enumerate}
\end{enumerate} 
\end{definition}

We shall also recall the definition of $J$-sets \cite[Definition 2.1 and 2.6]{j-1}, which are closely related to $C$-sets and will be used later.

\begin{definition}
Let $S$ be a semigroup. 
\begin{enumerate}
\item We call $A\subseteq S$ a $J$-set (in $S$) if for every $F\in\mathcal{P}_f(\mathcal{T})$, there exist $m\in\mathbb{N},\,a\in S^{m+1}$ and $t\in\mathcal{J}_m$ such that for every $f\in F$, we have $x(m,a,t,f)\in A$.
\item $J(S)=\{p\in\beta S: \text{ For every } A\in p \text{ we have that } A \text{ is a } J\text{-set}\}$.
\end{enumerate}
\end{definition}

If $S$ is a commutative semigroup, then we know that $S$ satisfies the Strong F{\o}lner Condition (SFC) \cite[Theorem 4]{aw}. The following definition for (SFC) is taken from \cite{hs-2}.

\begin{definition}
A semigroup $S$ satisfies the Strong F{\o}lner Condition (SFC) if $(\forall H\in\mathcal{P}_f(S))(\forall \epsilon >0)(\exists K\in \mathcal{P}_f(S))(\forall s\in H)(|K\triangle sK|<\epsilon\cdot |K|)$.
\end{definition}

In a semigroup $S$, (SFC) ensures the existence of a special type of net in $\mathcal{P}_f(S)$, called F{\o}lner net \cite[Theorem 4.2]{hs-2}. The definition of F{\o}lner net is as follows \cite[Definition 4.1]{hs-2}.

\begin{definition}
Let $S$ be a semigroup and let $\mathcal{F}=\langle F_i\rangle_{i \in I}$ be a net in $\mathcal{P}_f(S)$. Then $\mathcal{F}$ is a F{\o}lner net if for each $s\in S$, the net $\Big\langle\frac{|sF_i\triangle F_i|}{|F_i|}\Big\rangle_{i\in I}$ converges to $0$.
\end{definition}

The following notion of F{\o}lner density \cite[Definition 4.15]{hs-2} will be used in the next lemma to provide an upper bound for the Upper Banach density with respect to any F{\o}lner net.

\begin{definition}
Let $S$ be a semigroup which satisfies the (SFC). Let $A\subseteq S$. Then the F{\o}lner density of $A$ is  $d_{F\phi}(A)=\sup \{\alpha:(\forall H\in\mathcal{P}_f(S))(\forall \epsilon >0)(\exists K\in \mathcal{P}_f(S))(\forall s\in H)(|K\triangle sK|<\epsilon\cdot |K|) \text{ and } |A\cap K|\geq\alpha\cdot |K|\}$.
\end{definition} 

\begin{lemma}\label{Lemma 6.8}
Let $S$ be a semigroup which satisfies (SFC). Let $\mathcal{F}$ be a F{\o}lner net in $\mathcal{P}_f(S)$. Then for all $A\subseteq S$, $d_{\mathcal{F}}^*(A)\leq d_{F\phi}(A)$. 
\end{lemma}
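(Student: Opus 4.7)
The plan is to show that whenever $\alpha<d_{\mathcal{F}}^{*}(A)$, one can exhibit a finite set $K$ witnessing the value $\alpha$ for $d_{F\phi}(A)$; letting $\alpha$ increase to $d_{\mathcal{F}}^{*}(A)$ then gives the desired inequality. The only case of interest is $d_{\mathcal{F}}^{*}(A)>0$, so fix any $\alpha$ with $0<\alpha<d_{\mathcal{F}}^{*}(A)$ and arbitrary $H\in\mathcal{P}_f(S)$, $\epsilon>0$. I need to build a single $K\in\mathcal{P}_f(S)$ that is simultaneously $(H,\epsilon)$-F{\o}lner and satisfies $|A\cap K|\geq\alpha|K|$.

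First I would use that $\mathcal{F}=\langle F_i\rangle_{i\in I}$ is a F{\o}lner net: for each $s\in H$ the quantity $|sF_i\triangle F_i|/|F_i|$ tends to $0$, so since $H$ is finite I can pick $i_0\in I$ such that $|sF_i\triangle F_i|<\alpha\epsilon\cdot|F_i|$ for all $i\geq i_0$ and all $s\in H$. Next, invoking the definition of $d_{\mathcal{F}}^{*}(A)$ with the threshold $\alpha$, I choose $i\geq i_0$ and $t\in S\cup\{1\}$ with $|A\cap(F_i\cdot t)|\geq\alpha|F_i|$, and set $K:=F_i\cdot t$.

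The density requirement on $K$ is then immediate: since $|K|\leq|F_i|$, we have $|A\cap K|\geq\alpha|F_i|\geq\alpha|K|$. The crucial point is converting the F{\o}lner estimate on $F_i$ into one on $K$. The key observation is that right multiplication by $t$ gives a (possibly non-injective) surjection $F_i\to F_i\cdot t$, and likewise $sF_i\to sF_i\cdot t=sK$; a short set-theoretic check shows that if $x\in sK\setminus K$ then $x=zt$ for some $z\in sF_i\setminus F_i$, and symmetrically. Consequently
\[
|sK\triangle K|\;\leq\;|sF_i\triangle F_i|\qquad(s\in H).
\]
Combining this with $|F_i|\leq|K|/\alpha$ (which follows from $|K|\geq|A\cap K|\geq\alpha|F_i|$) yields $|sK\triangle K|<\alpha\epsilon|F_i|\leq\epsilon|K|$ for every $s\in H$, so $K$ is indeed $(H,\epsilon)$-F{\o}lner.

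The main (only) subtle step is the passage $|sK\triangle K|\leq|sF_i\triangle F_i|$, because the translate $K=F_i t$ can be strictly smaller than $F_i$ when $S$ lacks right cancellation, and one must be careful that shrinkage does not inflate the relative size of the symmetric difference. This is handled precisely by the factor $1/\alpha$ absorbed into the initial choice of $i_0$ (using $\alpha\epsilon$ instead of $\epsilon$ in the F{\o}lner bound). Having produced such a $K$ for every $\alpha<d_{\mathcal{F}}^{*}(A)$, every $H\in\mathcal{P}_f(S)$ and every $\epsilon>0$, the definition of $d_{F\phi}$ gives $d_{F\phi}(A)\geq\alpha$, and letting $\alpha\nearrow d_{\mathcal{F}}^{*}(A)$ completes the proof.
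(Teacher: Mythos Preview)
Your argument is correct. The key step---showing $|sK\triangle K|\leq|sF_i\triangle F_i|$ for $K=F_i t$---follows cleanly from the fact that right multiplication by $t$ surjects $F_i$ onto $K$ and $sF_i$ onto $sK$: any preimage in $sF_i$ of a point of $sK\setminus K$ must lie in $sF_i\setminus F_i$ (and symmetrically), and surjectivity then gives the cardinality bound. The device of demanding F{\o}lner quality $\alpha\epsilon$ rather than $\epsilon$, so as to absorb the possible shrinkage via $|K|\geq|A\cap K|\geq\alpha|F_i|$, is exactly what is needed.

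The paper's proof takes a different route: it is a two-line citation, invoking \cite[Theorem~4.16]{hs-2} (which identifies $\overline{d}_{\mathcal{F}}$ with $d_{F\phi}$ for F{\o}lner nets in semigroups satisfying SFC) together with the trivial inequality $d_{\mathcal{F}}^*\leq\overline{d}_{\mathcal{F}}$. Your approach is direct and self-contained, bypassing the intermediate quantity $\overline{d}_{\mathcal{F}}$ altogether and manufacturing a F{\o}lner-density witness explicitly from a Banach-density witness $F_i t$; the paper's approach is shorter on the page but outsources the actual work to the cited theorem.
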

\begin{proof}
This is an immediate consequence of \cite[Theorem 4.16]{hs-2} and the trivial fact that $d_{\mathcal{F}}^*(A)\leq \overline{d}_{\mathcal{F}}(A)$.
\end{proof}

So, given a F{\o}lner net $\mathcal{F}$ in $\mathcal{P}_f(S)$, we have $D_{\mathcal{F}}^*\subseteq \{p\in\beta S:(\forall A\in p)(d_{F\phi}(A)>0)\}$ and this insists us to define the following terminology.

\begin{definition}
Let $S$ be a semigroup which satisfies (SFC). Let $\triangle (S)=\{p\in\beta S:(\forall A\in p)(d_{F\phi}(A)>0)\}$. We say that $A\subseteq S$ is a $D_{F\phi}$-set if it is a member of an Idempotent in $\triangle (S)$.
\end{definition}

Thus, in a commutative semigroup $S$, we conclude that every $D_{F\phi}$-set is a $C$-set. This follows from \cite[Lemma 2.2 and Theorem 6.10]{hs-3} and \cite[Theorem 3.2]{j-1}, which we recall here in order.

\begin{lemma}
Let $S$ be a semigroup which satisfies (SFC). Then for every $A\subseteq S$, there exists a left invariant mean $\mu$ on $S$ (i.e. a regular Borel probability measure $\mu$ on $\beta{S}$ such that $\mu(\overline{s^{-1}B})=\mu(\overline{B})$ for every $B\subseteq S$, and every $s\in S$) such that $d_{F\phi}(A)=\mu(\overline{A})$.
\end{lemma}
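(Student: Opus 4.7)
The plan is to realize $\mu$ as a weak-$*$ limit of uniform probability measures on Følner sets chosen to certify the Følner density of $A$. Unpacking the supremum in the definition of $d_{F\phi}(A)$, to each pair $(H,\epsilon) \in \mathcal{P}_f(S) \times (0,\infty)$ one associates a finite set $K_{H,\epsilon} \subseteq S$ which is $(H,\epsilon)$-invariant (so $|K \triangle sK| < \epsilon |K|$ for all $s \in H$) and satisfies $|A \cap K| \geq (d_{F\phi}(A) - \epsilon)|K|$. One then attaches the uniform probability measure $\mu_{H,\epsilon}(E) := |K_{H,\epsilon} \cap E|/|K_{H,\epsilon}|$, viewed as a regular Borel probability measure on $\beta S$ via the embedding $S \hookrightarrow \beta S$.

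Direct the pairs $(H,\epsilon)$ by $(H,\epsilon) \leq (H',\epsilon')$ iff $H \subseteq H'$ and $\epsilon \geq \epsilon'$. Banach--Alaoglu applied to the unit ball in $C(\beta S)^*$ then supplies a weak-$*$ convergent subnet whose limit $\mu$ is, by the Riesz representation theorem, a regular Borel probability measure on $\beta S$. Since $\overline{A}$ is clopen in $\beta S$, the function $\chi_{\overline{A}}$ is continuous, so $\mu(\overline{A}) = \lim_\alpha \mu_{H_\alpha,\epsilon_\alpha}(\overline{A}) \geq d_{F\phi}(A)$. To verify left invariance, fix $s \in S$ and $B \subseteq S$; once $s \in H$, a short counting argument exploiting the partition of $K$ according to whether $sk$ lies in $B$ or its complement, together with the Følner bound $|K \triangle sK| < \epsilon|K|$ (which controls the total multiplicity defect between $\{k \in K : sk \in B\}$ and $B \cap sK$, and then between $B \cap sK$ and $B \cap K$), shows that $|s^{-1}B \cap K|$ differs from $|B \cap K|$ by at most $2\epsilon|K|$. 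Hence $|\mu_{H,\epsilon}(\overline{B}) - \mu_{H,\epsilon}(\overline{s^{-1}B})| < 2\epsilon$, and passing to the weak-$*$ limit along the clopen indicators yields $\mu(\overline{s^{-1}B}) = \mu(\overline{B})$.

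For the reverse inequality $\mu(\overline{A}) \leq d_{F\phi}(A)$, the plan is to invoke the variational characterization of Følner density: on a semigroup satisfying (SFC), every left-invariant mean is a weak-$*$ cluster point of uniform measures on almost-invariant finite sets (a Reiter-type converse), and the density of $A$ on every such finite set is bounded above by $d_{F\phi}(A)$ essentially by definition. Combining this with the lower bound from the previous paragraph delivers the desired equality $\mu(\overline{A}) = d_{F\phi}(A)$.

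The principal obstacle will be securing this reverse inequality cleanly. The construction above only arranges $|A \cap K_{H,\epsilon}|/|K_{H,\epsilon}| \geq d_{F\phi}(A) - \epsilon$ without any upper control, so a priori $\mu(\overline{A})$ might exceed $d_{F\phi}(A)$. Ruling this out requires the dual principle---that finite-set averages recover \emph{every} left-invariant mean in the (SFC) setting---which is the substantive content of the results cited from \cite{hs-3}. A secondary technical nuisance, handled in the left-invariance step above, is that left multiplication $k \mapsto sk$ need not be injective on $K$ in a general semigroup satisfying (SFC); but the multiplicity defect is exactly $|K| - |sK|$, which the Følner condition bounds by $\epsilon|K|$, so the argument survives without any cancellativity hypothesis.
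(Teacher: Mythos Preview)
The paper does not prove this lemma itself; it is recalled from \cite[Lemma 2.2 and Theorem 6.10]{hs-3} without argument. Your construction of $\mu$ as a weak-$*$ cluster point of uniform measures on well-chosen F{\o}lner sets is correct and delivers both left invariance and the lower bound $\mu(\overline{A}) \geq d_{F\phi}(A)$, and your treatment of the multiplicity defect when $k\mapsto sk$ is not injective is sound.

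The argument for the reverse inequality $\mu(\overline{A}) \leq d_{F\phi}(A)$ contains a genuine error. You assert that ``the density of $A$ on every such finite set is bounded above by $d_{F\phi}(A)$ essentially by definition,'' but the definition of $d_{F\phi}(A)$ is a supremum over those $\alpha$ for which \emph{some} $(H,\epsilon)$-invariant $K$ satisfies $|A\cap K|/|K|\geq\alpha$; it imposes no upper bound on $|A\cap K|/|K|$ for any particular almost-invariant $K$. For instance, in $(\mathbb{Z}^2,+)$ with $A=\mathbb{Z}\times\{0\}$ one has $d_{F\phi}(A)=0$, yet $K=\{1,\ldots,n\}\times\{0\}$ is $(\{(1,0)\},2/n)$-invariant with $|A\cap K|/|K|=1$. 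The correct argument reverses the logic: given any left-invariant mean $\nu$ and any pair $(H,\epsilon)$, the Reiter--Namioka machinery produces an $(H,\epsilon)$-invariant $K$ whose $A$-density is close to $\nu(\overline{A})$; this $K$ then serves as a \emph{witness} in the supremum defining $d_{F\phi}(A)$, yielding $d_{F\phi}(A)\geq\nu(\overline{A})$. Applied with $\nu=\mu$ this supplies the missing inequality.
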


\begin{theorem}\label{Theorem 6.11}
Let $S$ be a commutative semigroup and let $A\subseteq S$ be such that $\mu (\overline{A})>0$ for some left invariant mean $\mu$ on $S$. Then $A$ is a $J$-set.
\end{theorem}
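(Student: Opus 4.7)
The plan is to combine commutativity of $S$ (which simplifies the $J$-set witness $x(m,a,t,f)$) with a Furstenberg-type multiple recurrence argument driven by the left-invariance of $\mu$. The underlying picture is that left multiplication by elements of $S$ gives a measure-preserving action of the commutative semigroup $S$ on $(\beta S, \mu)$, and the commutativity of this action is precisely what enables a joint recurrence statement for several sequences simultaneously.

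First, I would use commutativity to rewrite $x(m,a,t,f) = a^* \cdot \prod_{i=1}^m f(t_i)$, where $a^* := \prod_{i=1}^{m+1}a(i)$. Given $F=\{f_1,\ldots,f_k\}\in\mathcal{P}_f(\mathcal{T})$, it therefore suffices to produce $m\in\mathbb{N}$, $t_1<\cdots<t_m$ in $\mathbb{N}$, and $a^*\in S^{m+1}$ such that
$$a^* \in \bigcap_{j=1}^k \Big({\prod}_{i=1}^m f_j(t_i)\Big)^{-1}A.$$
Next I would exploit the left-invariance: for every $s\in S$, $\mu(\overline{s^{-1}A})=\mu(\overline{A})=\alpha>0$, so each $B_{j,t}:=\overline{(\prod_{i=1}^m f_j(t_i))^{-1}A}$ has $\mu$-measure $\alpha$. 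The central step is to find $m$ and $t_1<\cdots<t_m$ for which the joint intersection $\bigcap_{j=1}^k B_{j,t}$ also has positive $\mu$-measure. Once established, this intersection is nonempty in $\beta S$; any ultrafilter inside supplies an element of $\bigcap_{j=1}^k (\prod_{i=1}^m f_j(t_i))^{-1}A$, and by taking $m$ large enough and distributing the resulting element suitably as a product of $m+1$ factors, we obtain the desired $a^*$.

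The main obstacle is exactly the joint recurrence step: for arbitrary $f_1,\ldots,f_k\in\mathcal{T}$ taking values in $S$, one must produce $t_1<\cdots<t_m$ with $\mu(\bigcap_{j=1}^k B_{j,t})>0$. I would attempt this by induction on $k$. The base case $k=1$ reduces to a Poincar\'e-style recurrence for the single sequence $\prod_{i=1}^m f_1(t_i)$ acting on $(\beta S, \mu)$ by measure-preserving translations. For the inductive step one applies the $(k-1)$-case to $\{f_1,\ldots,f_{k-1}\}$ and then uses commutativity to reorder the translations coming from $f_k$, merging them into the existing product while preserving measure, in the spirit of Bergelson--Leibman multiple recurrence for commuting transformations. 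The commutativity of $S$ is indispensable here, since otherwise rearranging products would change the element of $S$ being produced and the whole reduction would fail. A secondary technical point is arranging that the element $a^*$ produced lies in $S^{m+1}$ rather than just in $S$; this is handled by taking $m$ sufficiently large and absorbing the required factorization into the choice of the tuple $a(1),\ldots,a(m+1)$, which is the strategy of the proof of Theorem 3.2 in \cite{j-1}.
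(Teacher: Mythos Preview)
The paper does not supply its own proof of this statement; it is quoted verbatim from Hindman and Strauss \cite[Theorem~6.10]{hs-3}, and the paper explicitly remarks (just after Theorem~\ref{Theorem 6.13}) that the proof in that reference uses the Furstenberg--Katznelson ergodic Szemer\'edi theorem for commuting IP-systems \cite[Theorem~A]{fk} as its main ingredient.

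Your overall reduction is correct and matches the known approach: commutativity collapses $x(m,a,t,f)$ to $a^*\cdot\prod_{i}f(t_i)$, left-invariance of $\mu$ gives each translated copy of $\overline{A}$ measure $\alpha>0$, and the problem becomes producing $t_1<\cdots<t_m$ for which the $k$ translated copies intersect in a set of positive measure. The genuine gap is the inductive step you propose for this joint recurrence. Having found indices $t_1<\cdots<t_m$ that work for $f_1,\ldots,f_{k-1}$ gives no handle on $f_k$: the set $\overline{(\prod_i f_k(t_i))^{-1}A}$ is then a single fixed set of measure $\alpha$, and there is no reason it should meet the positive-measure intersection delivered by the inductive hypothesis, nor any way to ``reorder and merge'' it in after the indices are already chosen. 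More fundamentally, the maps $s\mapsto \prod_{i\in H} f_j(i)\cdot s$ are not iterates of a fixed transformation but an IP-system of translations indexed by finite sets $H$, so neither Poincar\'e recurrence nor Bergelson--Leibman polynomial recurrence is the relevant tool. What is actually needed is precisely the IP multiple recurrence theorem of Furstenberg and Katznelson, which asserts that for finitely many commuting IP-systems on a probability space, a set of positive measure recurs simultaneously along an IP$^*$-set of indices. Your sketch identifies the right target but stops exactly where this deep ergodic input is required, and the induction you outline cannot substitute for it.
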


\begin{theorem}
Let $S$ be a semigroup. If $p\in J(S)$ is an idempotent, then every $A\in p$ is a $C$-set.
\end{theorem}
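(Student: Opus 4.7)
The plan is to adapt the standard proof of the Central Sets Theorem, substituting the subsemigroup $J(S)$ of $\beta S$ for the minimal ideal $K(\beta S)$. I will rely on three facts: (i) for any idempotent $p\in\beta S$ and any $A\in p$, the set $A^{*}=\{s\in A:s^{-1}A\in p\}$ lies in $p$, and moreover $s^{-1}A^{*}\in p$ for every $s\in A^{*}$; (ii) finite intersections of members of $p$ are in $p$; and (iii) every $B\in p$ is a $J$-set, because $p\in J(S)$. I will also use the observation that the $J$-set property can be witnessed with $t(1)$ arbitrarily large: given $F\in\mathcal{P}_f(\mathcal{T})$ and $N\in\mathbb{N}$, apply the $J$-set property to the shifted family $\{g_{f}:f\in F\}$, where $g_{f}(n)=f(n+N)$, and then reinterpret the witnessing $t$ by adding $N$ to each coordinate.

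Given $A\in p$, I define $m(F)\in\mathbb{N}$, $\alpha(F)\in S^{m(F)+1}$, and $\tau(F)\in\mathcal{J}_{m(F)}$ for each $F\in\mathcal{P}_f(\mathcal{T})$ by recursion on $|F|$, maintaining the invariant that for every chain $G_{1}\subsetneq G_{2}\subsetneq\cdots\subsetneq G_{k}$ of already-processed subsets and every selection $f_{i}\in G_{i}$, the partial product $\prod_{i=1}^{k} x\bigl(m(G_{i}),\alpha(G_{i}),\tau(G_{i}),f_{i}\bigr)$ lies in $A^{*}$. At stage $F$, only finitely many proper subsets of $F$ have been processed, so the set
\[
\mathcal{Y}_{F} = \Bigl\{{\prod}_{i=1}^{k} x\bigl(m(G_{i}),\alpha(G_{i}),\tau(G_{i}),f_{i}\bigr):G_{1}\subsetneq\cdots\subsetneq G_{k}\subsetneq F,\; f_{i}\in G_{i},\; k\geq 1\Bigr\}
\]
is finite and, by the invariant, contained in $A^{*}$. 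Put $C_{F}=A^{*}\cap\bigcap_{y\in\mathcal{Y}_{F}} y^{-1}A^{*}$; facts (i) and (ii) place $C_{F}$ in $p$, and then (iii) makes it a $J$-set. Apply its $J$-set property to the family $F\subseteq\mathcal{T}$, using the shifting observation to force $\tau(F)(1)>\max\{\tau(G)(m(G)):G\subsetneq F\}$, thereby obtaining $m(F),\alpha(F),\tau(F)$ with $x\bigl(m(F),\alpha(F),\tau(F),f\bigr)\in C_{F}$ for each $f\in F$.

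To check that the invariant is preserved, consider any chain $G_{1}\subsetneq\cdots\subsetneq G_{k}\subsetneq F$ with $f_{i}\in G_{i}$ and any $f\in F$; the partial product $y=\prod_{i=1}^{k}x\bigl(m(G_{i}),\alpha(G_{i}),\tau(G_{i}),f_{i}\bigr)$ belongs to $\mathcal{Y}_{F}$, so $x\bigl(m(F),\alpha(F),\tau(F),f\bigr)\in y^{-1}A^{*}$ and therefore $y\cdot x\bigl(m(F),\alpha(F),\tau(F),f\bigr)\in A^{*}$; the singleton-chain case is handled by $C_{F}\subseteq A^{*}$. Once the construction terminates, clause (a) of the $C$-set definition is built into the choice of $\tau(F)(1)$, while clause (b) is precisely the invariant combined with $A^{*}\subseteq A$.

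The principal obstacle is purely organisational: ensuring at every stage that $\mathcal{Y}_{F}$ is genuinely finite (which uses only that $F$ has finitely many proper subsets and that each partial product involves a finite chain) and already lies in $A^{*}$ (which is the content of the invariant from previous stages), while simultaneously maintaining the monotonicity required for clause (a). No additional algebraic machinery beyond facts (i)-(iii) and the shifting observation is needed.
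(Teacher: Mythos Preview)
The paper does not supply its own proof of this theorem; it is quoted verbatim from \cite[Theorem 3.2]{j-1} and used as a black box in deducing Theorem~\ref{Theorem 6.13}. Your argument is correct and is precisely the standard proof one finds in the cited source: recursion on $\mathcal{P}_f(\mathcal{T})$ ordered by inclusion, using the $A^{*}$ trick for idempotents, the finite-intersection property of $p$, and the $J$-set hypothesis applied to the auxiliary set $C_F$, together with the shifting device to enforce the monotonicity in clause~(a). There is nothing to compare, since the paper offers no alternative route; your write-up simply fills in what the paper chose to cite.
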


Finally, in a commutative semigroup, the above discussion culminates into the following theorem.

\begin{theorem}\label{Theorem 6.13}
Let $S$ be a commutative semigroup with a given F{\o}lner net $\mathcal{F}$ in $\mathcal{P}_f(S)$. Then any $D$-set with respect to $\mathcal{F}$ is a $C$-set.
\end{theorem}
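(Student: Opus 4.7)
The plan is to chain together the results developed in this section: the hypothesis gives an idempotent $p\in D_{\mathcal{F}}^{*}$ with $A\in p$, and I want to show $p$ is in fact an idempotent in $J(S)$, so that the very last theorem ``every member of an idempotent in $J(S)$ is a $C$-set'' applies directly to $A$.

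First, since $A$ is a $D$-set with respect to $\mathcal{F}$, I pick an idempotent $p\in D_{\mathcal{F}}^{*}$ with $A\in p$. The commutativity of $S$ ensures that $S$ satisfies (SFC), which is the standing hypothesis behind the Følner-density machinery (and in particular makes the notion of $D_{F\phi}$-set meaningful). Now let $B\in p$ be arbitrary. Because $p\in D_{\mathcal{F}}^{*}$, we have $d_{\mathcal{F}}^{*}(B)>0$, and since $\mathcal{F}$ is a Følner net, Lemma \ref{Lemma 6.8} gives $d_{F\phi}(B)\ge d_{\mathcal{F}}^{*}(B)>0$.

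Next I appeal to the invariant-mean representation (the Lemma stated just before Theorem \ref{Theorem 6.11}) to produce a left invariant mean $\mu_B$ on $S$ with $\mu_B(\overline{B})=d_{F\phi}(B)>0$. Since $S$ is commutative, Theorem \ref{Theorem 6.11} now applies and shows that $B$ is a $J$-set. As $B\in p$ was arbitrary, this proves $p\in J(S)$. Finally, $p$ is an idempotent lying in $J(S)$, so the last recalled theorem of the section yields that every member of $p$ is a $C$-set; in particular $A$ is a $C$-set.

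There is essentially no obstacle: all the non-trivial work has been done in the preceding lemmas (Lemma \ref{Lemma 6.8} comparing the two densities, the representation of $d_{F\phi}$ by an invariant mean, Theorem \ref{Theorem 6.11} linking positive invariant-mean-mass to the $J$-set property, and the $J(S)$-idempotent characterization of $C$-sets). The only structural point worth emphasizing is that the argument uses the inclusions ``$D_{\mathcal{F}}^{*}\subseteq\triangle(S)\subseteq J(S)$'' (for commutative $S$ with a Følner net $\mathcal{F}$), upgraded from a set-theoretic inclusion to the statement that any idempotent of $D_{\mathcal{F}}^{*}$ is automatically an idempotent of $J(S)$; no separate argument about the existence or location of the idempotent $p$ is needed, since $p$ is handed to us by the assumption that $A$ is a $D$-set.
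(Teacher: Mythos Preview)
Your argument is correct and matches the paper's intended reasoning exactly: the paper does not give an explicit proof of Theorem \ref{Theorem 6.13} but instead states that ``the above discussion culminates into'' it, and that discussion is precisely the chain $D_{\mathcal{F}}^{*}\subseteq\triangle(S)\subseteq J(S)$ (via Lemma \ref{Lemma 6.8}, the invariant-mean lemma, and Theorem \ref{Theorem 6.11}) followed by the $J(S)$-idempotent characterization of $C$-sets, which is exactly what you carry out.
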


Note that the only obstruction to prove the Theorem \ref{Theorem 6.13} for a general class of noncommutative semigroup is the Theorem \ref{Theorem 6.11}, which uses an Ergodic Szemer\'edi theorem for commuting IP-system \cite[Theorem A]{fk} as one of the main ingredients. At the end, with the support of Theorem 6.13, we raise the following question.

\begin{question}
Let $S$ be a semigroup which satisfies the Strong F{\o}lner Condition (SFC). Let $F$ be a F{\o}lner
net in $P_f(S)$. Then does a $D$-set with respect to $F$ always become a C-set?
\end{question}

\section*{Acknowledgments}
The authors are indebted to the anonymous reviewers for their generous comments and suggestions on
the previous manuscripts. The first author gratefully acknowledges the UGC-NET SRF fellowship with Ref.
No. 20/12/2015(ii)EU-V of CSIR-UGC NET December 2015. The third author acknowledges the fellowship
from IISER Berhampur with Ref. No. IISERBPR/CoFA/2019/74.

\end{document}